\DeclareMathOperator{\Pic}{Pic}
\DeclareMathOperator{\coker}{coker}
\DeclareMathOperator{\Sym}{Sym}
\DeclareMathOperator{\sym}{sym}
\DeclareMathOperator{\Tors}{Tors}
\DeclareMathOperator{\di}{div}
\DeclareMathOperator{\tr}{tr}
\DeclareMathOperator{\herm}{herm}
\DeclareMathOperator{\pim}{Im}
\DeclareMathOperator{\pr}{Re}
\DeclareMathAlphabet{\mathpzc}{OT1}{pzc}{m}{it}
\newcommand{\mbb}{\mathbb}
\newcommand{\mc}{\mathcal}
\newcommand{\ddt}{\frac{d}{dt}}
\newcommand{\dd}[2]{\frac{d #1}{d #2}}
\newcommand{\de}[2]{\frac{\partial #1}{\partial #2}}
\newcommand{\dbar}{\bar{\partial}}
\newcommand{\id}{{\rm id}}
\newcommand{\Hom}{{\rm Hom}}
\newcommand{\Mor}{{\rm Mor}}
\newcommand{\rest}{ {\bigg |} }
\newcommand{\trest}{{\big |} }
\newcommand{\M}{\mathcal{M}}
\newcommand{\matrice}[2]{\left(\begin{array}{#1} #2 \end{array} \right)}
\newcommand{\nota}{\footnote}
\newcommand{\mf}{\mathfrak}
\newcommand{\End}{\mathrm{End}}
\newcommand{\Aut}{\mathrm{Aut}}
\newcommand{\Cl}{\mathrm{Cl}}
\newcommand{\tens}{\otimes}
\newcommand{\w}{\wedge}
\newcommand{\Herm}{\mathrm{Herm}}
\newcommand{\Stab}{\mathrm{Stab}}
\newcommand{\Met}{\mathrm{Met}}
\newtheorem{theorem}{Theorem}[subsection]
\newtheorem{lemma}[theorem]{Lemma}
\newtheorem{pps}[theorem]{Proposition}
\newtheorem{stat}[theorem]{Statement}
\theoremstyle{definition}
\newtheorem{definition}[theorem]{Definition}
\theoremstyle{remark}
\newtheorem{remark}[theorem]{Remark}
\numberwithin{equation}{section}
\mathchardef\phi="0127
\mathchardef\varphi="011E
\mathchardef\alpha="710B
\mathchardef\beta="710C
\mathchardef\gamma="710D
\mathchardef\delta="710E
\mathchardef\epsilon="7122
\mathchardef\zeta="7110
\mathchardef\eta="7111
\mathchardef\theta="7112
\mathchardef\iota="7113
\mathchardef\kappa="7114
\mathchardef\lambda="7115
\mathchardef\mu="7116
\mathchardef\nu="7117
\mathchardef\xi="7118
\mathchardef\pi="7119
\mathchardef\rho="711A
\mathchardef\sigma="711B
\mathchardef\tau="711C
\mathchardef\upsilon="711D
\mathchardef\chi="711F
\mathchardef\psi="7120
\mathchardef\omega="7121
\mathchardef\varepsilon="710F
\mathchardef\vartheta="7123
\mathchardef\varpi="7124
\mathchardef\varrho="7125
\mathchardef\varsigma="7126
\title{Perturbations of the metric in Seiberg-Witten equations}
\author{Luca Scala}
\date{}
\begin{document}
\maketitle
\begin{abstract}
Let $M$ a compact connected orientable 4-manifold. We study the space
$\Xi$ of $Spin^c$-structures of fixed fundamental class, 
as an infinite dimensional 
principal bundle on the manifold of riemannian metrics on $M$. 
In order to study perturbations of the metric in Seiberg-Witten
equations, we study the transversality of  universal equations, parametrized
with all $Spin^c$-structures $\Xi$.
  We prove that, on a complex
K\"ahler surface, for an hermitian metric $h$ sufficiently close to the 
original K\"ahler metric, the moduli space of Seiberg-Witten equations relative to the metric $h$ is smooth of the expected dimension.
\end{abstract}
\section*{Introduction}
Let $(M,g)$ a compact connected  oriented riemannian $4$-manifold. Chosen on $(M,g)$ a $Spin^c$-structure~$\xi$ of spinor bundle $W=W_+ \oplus W_-$ and of determinant line bundle $L\simeq \det W_{\pm}$, consider the Seiberg-Witten equations: 
\begin{subequations}\label{swintro}
\begin{gather}\tag{$SW_{\xi}$a}D^{\xi}_A \psi =  0 \\\tag{$SW_{\xi}$b} \rho_{\xi}(F_A^{+}) =[\psi^* \tens \psi]_0 \;,
\end{gather}
\end{subequations}in the unknowns 
$(A, \psi) \in \mc{A}_{U(1)}(L) \times \Gamma(W_+) $, where $\mc{A}_{U(1)}(L)$ denotes the affine space of $U(1)$-connections on $L$. The aim of this article 
is the study of the behaviour of Seiberg-Witten equations (see \cite{Witten1994}, \cite{SeibergWitten1994a}, \cite{SeibergWitten1994b}, \cite{MorganSWEAFMT}) under perturbations of the metric $g$. 

In Donaldson's theory of $SU(2)$-istantons, deeply related to Seiberg-Witten theory, 
the behaviour of ASD equations $F_A^+=0$ when changing the metric is well understood, and metric
perturbations are the main tool to obtain transversality results: the
celebrated Freed-Uhlenbeck theorem (\cite{DonaldsonGFM},
\cite{FreedUhlenbeckIFM}) states that, for a generic metric, the functional 
defining ASD equations is transversal to the zero section at irreducible connections: consequently, for a generic metric, the moduli space of irreducible 
istantons is smooth of the expected dimension. 

On the other hand,  an analogous  result in Seiberg-Witten theory is unknown; more generally, no much is known on the dependence on the metric of Seiberg-Witten equations, 
one of the reasons being probably the fact that the transversality for equations
($SW_{\xi}$) on an irreducible monopole can be very easily obtained by perturbing the second equation adding a generic selfdual imaginary $2$-form $\eta$. 
The dependence of the metric in Seiberg-Witten equations has been studied by Maier in
\cite{Maier1997}, but always for a generic connection $A$ and no transversality issue
is addressed.
The problem of transversality with perturbation just of the metric
appears in the work of
Eichhorn and Friedrich (see \cite{EichhornFriedrich1997}, reported also in 
\cite{FriedrichDORG} and cited in \cite{Bennequin1997}): the authors 
claim to 
give a positive answer, but their proof is not correct: we will discuss the reason 
in remark \ref{EF}. The purpose of this article is to establish an analogous of 
Freed-Uhlenbeck theorem in Seiberg-Witten theory, giving a correct proof of the 
fact that, for generic metric, the Seiberg-Witten functional is transversal to the zero section and hence that  the 
 moduli space is smooth of the expected dimension (at least on irreducible monopoles).

In order to write Seiberg-Witten equations on the oriented 
riemannian $4$-manifold $(M,g)$, we have to fix a $Spin^c$-structure, that is, 
an equivariant lifting $\xi: Q_{Spin^c(4)} \rTo 
P_{SO(g)}$ of the $SO(4)$-principal bundle of equioriented orthonormal frames for the metric $g$ to a 
$Spin^c(4)$-principal bundle $Q_{Spin^c(4)}$. Since the $Spin^c$-structure is a 
metric concept, that is, it actually determines the metric, 
 when changing the metric on $M$ 
we are forced to change $Spin^c$-structure; however, for different metrics the $SO(4)$-bundles of equioriented orthonormal frames are isomorphic and can be 
lifted to the same principal bundle $Q_{Spin^c(4)}$ (of course by means of different morphisms $\xi$):
consequently, we can fix the bundle $Q_{Spin^c(4)}$ once for
all. Therefore it turns out that the right setting to study perturbations of the metric in Seiberg-Witten equations is considering  \emph{universal equations}, parametrized 
by all $Spin^c$-structures $\Xi$ of fundamental class $c$ and $Spin^c$-bundle $Q_{Spin^c(4)}$; we will then characterize only in a second step the variations of $Spin^c$-structures coming from a variation just
of the metric. The space $\Xi$  can be given the structure of a
(trivial infinite dimensional) principal bundle
over the space of riemannian metrics~$\Met(M)$, of structural group 
$\Aut(Q_{Spin^c(4)} \times_{Spin^c(4)} SO(4) )$; on the principal bundle
$\Xi$ there can now be defined a \emph{natural connection,
the horizontal distribution being characterized by consisting
precisely of variations of $Spin^c$-structures coming from variations
just of the metric}. The connection thus defined turns out to have
nontrivial curvature: it is therefore impossible to find (even
locally) a parallel section $\Met(M) \rTo \Xi$, by means of which
parametrizing correctly Seiberg-Witten equations with just the
metric. 
This is however not a difficult issue, since the universal Seiberg-Witten moduli space $\mc{M}$ admits a $\Aut(Q_{Spin^c(4)})$-equivariant fibration $\mc{M} \rTo \Xi$ over the space of 
$Spin^c$-structures~$\Xi$; hence the transversality of equations 
($SW_{\xi}$) at the point $\xi$ does not depend on the $Spin^c$-structure $\xi$, but only of the metric $g_{\xi}$ compatible with $\xi$: consequently the problem of transversality of Seiberg-Witten equations for generic metrics is equivalent to the problem of transversality of these equations for generic $Spin^c$-structures. 

\sloppy
This formalism (appeared first in \cite{OkonekTeleman1996}) allows us to reduce the problem --- after completing Fr\'echet spaces to 
Sobolev ones in a standard way ---
to the proof of the surjectivity of the differential 
$D_{(A, \psi, \xi)} \mbb{F}_{\Met(M)}$
of the functional $$\mbb{F}_{\Met(M)}(A, \psi, \xi) = (D_A^{\xi} \psi \; , \; 
\rho_{\xi}(F_A^{+,g_{\xi}}) -[\psi^* \tens \psi]_0 ) \;, $$defining universal 
Seiberg-Witten equations, at the solution $(A, \psi, \xi) \in \mc{A}_{U(1)}(L) \times \Gamma(W_+) \times \Xi$. 
To compute this operator, we need to compute 
the variation of the Dirac operator, 
performed first by Bourguignon and Gauduchon \cite{BourguignonGauduchon1992}. 
We present here a simple alternative proof: 
our approach has the advantage of fixing once for all the bundle 
$Q_{Spin^c}$ and consequently the bundle
of spinors $W$, 
and is particularly adapted to the transversality problem: indeed in 
this way all Dirac operators act on the same space of global sections, without any need of 
delicate identifications or transmutations operators.  

\sloppy The surjectivity of the differential $D_{(A, \psi, \xi)}\mbb{F}_{\Met(M)}$ is equivalent to the injectivity of the formal adjoint $(D_{(A, \psi, \xi)}\mbb{F}_{\Met(M)})^*$: consequently nontrivial solutions  of the kernel equations $(D_{(A, \psi, \xi)}\mbb{F}_{\Met(M)})^*u=0$ represent the obstruction to the transversality of the functional $\mbb{F}_{\Met(M)}$. In the general case the equations are 
intricate and we still do not have 
the answer. 

When $(M,g,J)$ is a complex K{\"a}hler surface with complex structure $J$ and with canonical line bundle
$K_M$, the Seiberg-Witten equations admit an interpretation
in terms of holomorphic couples $(\dbar_A, \alpha)$, where
$\dbar_A$ is a holomorphic $(0,1)$-semiconnection on a line bundle
$N$ such that $K^*_M \tens N^{\tens^2} \simeq L$, and $\alpha$ is a
holomorphic section of $(N, \dbar_A)$. This facts allow a drastic
simplification of the Seiberg-Witten equations and consequently of the problem of transversality for generic metrics. 
After interpreting all the preceding 
objects in the context of complex geometry, and thanks to the splitting
of the symmetric endomorphisms with respect to the metric 
into hermitian and anti-hermitian ones, 
the kernel equations $(D_{(A, \psi, \xi)}\mbb{F}_{\Met(M)})^*u=0$ become 
much simpler. Indicating with $\mc{M}_{H_J(M)}$ 
(with $\mc{M}_{K_J(M)}$) the moduli space of hermitian (k\"ahlerian)
monopoles -- that is, monopoles
$[A, \psi, \xi]$ such that $g_{\xi}$ is an hermitian (k\"ahlerian) metric on 
$M$ -- \emph{we proved that the moduli space $\mc{M}_{H_J(M)}$ is smooth at irreducible 
k\"ahlerian monopoles $\mc{M}^*_{K_J(M)}$}. In other words, we get that Seiberg-Witten equations are transversal
for a generic hermitian metric sufficiently close to the K{\"a}hler
metric $g$. We precisely proved: \\
{\bf Theorem.} {\it Let $(M,g,J)$ a K{\"a}hler surface. 
Let $N$ a hermitian line
  bundle on $M$ such that $2 \deg(N) -\deg(K_M) \neq 0$.  Consider the $Spin^c$-structure $\xi$ given by the canonical
  $Spin^c$-structure on $M$ twisted by the hermitian line bundle $N$. 
For a generic metric $h$ in a small open neighbourhood of $g \in \Met(M)$ and for all $Spin^c$-structure $\xi^{\prime}$, 
compatible with $h$,  
 the Seiberg-Witten moduli space $\mc{M}_{\xi^{\prime}}^{SW}$
  is smooth. Actually, the statement holds for a generic 
hermitian metric $h$ in a small open neighbourhood of $g$. }

\vspace{0.3cm}
\noindent
{\bf Acknowledgements.} The results presented here were obtained in 
the first part of my Ph.D. thesis \cite{ScalaPhD} at University Paris 7, under the direction of Prof. Joseph Le Potier. I will never forget his unvaluable help, 
his encouragement, his way of doing Mathematics. 

I would like to thank Prof. Andrei Teleman for his interest in this work, for his precious 
suggestions and for pointing out reference \cite{OkonekTeleman1996}, which 
improves and clarifies the approach of \cite{ScalaPhD}.

\section{$Spin^c$-structures and metrics}\label{section2}
The aim of this section is to recall the basics on $Spin^c$-structures 
from a point of view adapted to the study of metric perturbations, and to 
describe the set of all  $Spin^c$-structures of fixed type and fundamental class
as a principal fibration over the space of metrics on the manifold. See also~\cite[section 2]{OkonekTeleman1996}.

\subsection{$Spin^c$-structures}
Let $n \in \mbb{N}$, $n \geq 1$. 
Recall the fundamental central extensions of groups:
\begin{subequations}\label{exact}
\begin{gather}\label{exact1}
0 \rTo \mbb{Z}_2 \rTo Spin^c(n) \rTo^{\nu:=(\mu, \lambda)} SO(n) \times S^1 \rTo 1 \;. \\
\label{exact2}
1 \rTo S^1 \rTo Spin^c(n) \rTo^{\mu} SO(n) \rTo 1 
\end{gather}
\end{subequations}
Let now $M$ be a compact connected oriented manifold of dimension $n$ and 
let $P_{GL_+(n)}$ the principal $GL_+(n)$-bundle of oriented frames 
of the tangent bundle $TM$.
\begin{definition}A $Spin^c$-structure on $M$ (of type $Q_{Spin^c(n)}$) is the data of a $Spin^c(n)$-principal 
bundle $Q_{Spin^c(n)}$ over $M$ and of a $\mu$-equivariant morphism: 
$ \xi: Q_{Spin^c(n)} \rTo  P_{GL_+(n)} \;$.
The line bundle $L:= Q_{Spin^c(n)} \times_{\lambda} \mbb{C}$ 
is called the determinant line bundle and its first Chern
class $c:= c_1(L)$ is called the fundamental class of the $Spin^c$-structure $\xi$. 
Two $Spin^c$-structures $\xi \colon Q_{Spin^c(n)}\rTo  
 P_{GL_+(n)}$ and 
 $\xi^{\prime}\colon Q^{\prime}_{Spin^c(n)} \rTo P_{GL_+(n)}$
are isomorphic if there 
exist a $Spin^c(n)$-equivariant morphism $f\colon Q_{Spin^c(n)} \rTo Q^{\prime}_{Spin^c(n)}$ such that $\xi^{\prime} \circ f=\xi$.
\end{definition}It is well known that a $Spin^c$-structure of fundamental class $c$ 
exists if and only if $c 
\equiv w_2(M) \mod 2$. 
\begin{remark}\label{sonequivalent}Given a $Spin^c(n)$-principal bundle $Q_{Spin^c(n)}$,
we can form the $SO(n)$-principal bundle $Q_{SO(n)}:= Q_{Spin^c(n)} \times_{\mu} SO(n)$ and the $U(1)$-principal bundle $Q_{U(1)}:= Q_{Spin^c(n)} \times_{\lambda} U(1)$. Every $\mu$-equivariant morphism $\xi : Q_{Spin^c(n)} \rTo P_{GL_+(n)}$ 
factors 
through the composition of the $2$-fold covering 
projection $\eta :  Q_{Spin^c(n)} \rTo  Q_{SO(n)}$, 
followed by the $SO(n)$-equivariant
embedding 
$\gamma_{\xi}:~Q_{SO(n)} \rInto P_{GL_+(n)}$. 
It is thus clear that, once fixed a $Spin^c(n)$-bundle $Q_{Spin^c(n)}$, the 
data of a $Spin^c$-structure of principal bundle $Q_{Spin^c(n)}$ is equivalent to the data of a $SO(n)$-equivariant embedding $ Q_{SO(n)} \rInto P_{GL_+(n)}$.
\end{remark}

\begin{remark}It is a fundamental fact that a $Spin^c$-structure $\xi$ is a \emph{metric} concept. Indeed the embedding $\gamma_{\xi}$, induced by a $Spin^c$-structure $\xi$, provides a 
$SO(n)$-reduction of the principal bundle $P_{GL_+(n)}$, 
corresponding to the choice of a riemannian
metric $g_{\xi}$ on $TM$. We will denote with 
$P_{SO(g_{\xi})}$ the image of $\gamma_{\xi}$, that is, the principal $SO(n)$-subbundle of 
$P_{GL_+(n)}$ consisting of $g_{\xi}$-orthonormal oriented frames and with 
$\alpha_{\xi}$ the lifting $\alpha_{\xi}: Q_{Spin^c(n)} \rTo P_{SO(g_{\xi})}$. We will say that the metric $g_{\xi}$ is compatible with the $Spin^c$-structure 
$\xi$.

\end{remark}
\subsection{$Spin^c$-structures of fixed type and fundamental class}
Let now $c \in H^2(M,\mbb{Z})$ such that $c \equiv w_2(M) \mod 2$. The long 
non-abelian cohomology sequences associated to the central extensions
 (\ref{exact}) read, for the $H^1$-level, 
identifying $H^1(M, S^1)$ with $ H^2(M, \mbb{Z})$:
\begin{diagram}[LaTeXeqno,height=0.65cm]\label{lecs}
 {\qquad}& \rTo & H^1(M, \mbb{Z}_2) & \rTo^b & H^1(M, Spin^c(n)) &\rTo &
 H^1(M,SO(n)) \times H^2(M,\mbb{Z}) &\rTo & H^2(M, \mbb{Z}_2) & \rTo
 & {\qquad}\\
 {\qquad} & & \dTo^{e} & &\dTo^{=} & &\dTo & &\dTo &  &{\qquad}\\
{\qquad} & \rTo & H^2(M, \mbb{Z}) & \rTo^a & H^1(M, Spin^c(n)) &\rTo &
H^1(M,SO(n))  &\rTo & H^2(M, S^1) & \rTo & {\qquad} 
\end{diagram}
Setting $\mf{t}:= \Tors_2 H^2(M, \mbb{Z})= {\rm Im} \: e $ and $\mf{c} := \ker a$, we have the exact sequence:
$$ 0 \rTo \mf{c} \rTo \mf{t} \rTo^a H^1(M, Spin^c(n)) \rTo  H^1(M,SO(n)) \times H^2(M,\mbb{Z}) \rTo H^2(M, \mbb{Z}_2)\;.$$
\begin{remark}\label{rmk:gllift}Since $SO(n)$ is a maximal compact subgroup of $GL_+(n)$,  
$H^1(M , SO(n)) \simeq H^1(M, GL_+(n))$ (cf. \cite[appendix
B]{LawsonMichelsohnSG}); 
hence 
we can replace the first long exact sequence in (\ref{lecs}) with
$$ H^1(M , \mbb{Z}_2) \rTo  H^1(M, Spin^c(n)) \rTo H^1(M,GL_+(n)) \times H^2(M,\mbb{Z}) \rTo 
H^2(M, \mbb{Z}_2) $$
\end{remark}
Fix now a $Spin^c(n)$-bundle $Q_{Spin^c(n)}$  such that its isomorphism class $[Q_{Spin^c(n)}] \in H^1(M, Spin^c(n))$ lifts the couple $([P_{GL_+(n)}],c) \in H^1(M,GL_+(n)) \times H^2(M,\mbb{Z})$. 
For any metric $g$ on $M$ the element 
$[Q_{Spin^c(n)}]$ lifts the couple $([P_{SO(g)}], c) \in H^1(M, SO(n)) \times H^2(M,\mbb{Z}) $, where 
$P_{SO(g)}$ is the principal bundle of  oriented frames in $TM$, orthonormal
for the metric $g$.

We denote with $\Xi$ the space of all $\mu$-equivariant 
morphisms: $\xi: Q_{Spin^c(n)} \rTo P_{GL_+(n)}$ or, equivalently, by remark 
\ref{sonequivalent}, all 
$SO(n)$-equivariant maps: $\gamma: Q_{SO(n)} \rInto P_{GL_+(n)}$: 
$$ \Xi:= \Mor_{\mu}( Q_{Spin^c(n)},P_{GL_+(n)}) \simeq  \Mor_{SO(n)}( 
Q_{SO(n)},P_{GL_+(n)})\; .$$
The space $\Xi$ parametrizes all the $Spin^c$ structures on $M$ of fixed 
type $Q_{Spin^c(n)}$ and fundamental class~$c$.
Since every $Spin^c$-structure determines a metric, 
the space $\Xi$ is fibered over the space of riemannian metrics: 
\begin{equation} \label{fibration}\Xi \rTo \Met(M) \;.
\end{equation}Two $Spin^c$-structures compatibles with the same 
metric differ for the action of $\Aut(Q_{SO(n)})$, hence $\Xi$ has the structure of $\Aut(Q_{SO(n)})$-principal bundle; however, two $Spin^c$-structures 
compatible with the same metric need not to be isomorphic: indeed, they are isomorphic if and only if they differ for the action of $\Aut(Q_{Spin^c(n)})$. The long exact sequence: 
$$ 0 \rTo C^{\infty}(M, S^1) \rTo \Aut(Q_{Spin^c(n)}) \rTo ^{\hat{\eta}}
\Aut(Q_{SO(n)}) \rTo H^1(M, S^1) 
$$induced by the central extension (\ref{exact2}),
implies that the group $\Gamma:= 
{\rm Im} \; \hat{\eta}$
acts in a free and transitive way on the 
fibers of (\ref{fibration})
; moreover, the group 
$ \coker \hat{\eta} \simeq \mf{c}$
parametrizes the set of isomorphism classes of $Spin^c$-structures of fixed type $Q_{Spin^c(n)}$ and
fundamental class $c$ over a fixed metric. The quotient $\Xi/\Gamma$ is 
isomorphic to:
$$ \Xi / \Gamma \simeq \Xi/ \Aut(Q_{Spin^c(n)}) \simeq \Met(M) \times
\pi_0(\Xi/ \Aut(Q_{Spin^c(n)}) \simeq \Met(M) \times \mathfrak{c} \;
,$$because $\Met(M)$ is contractible. If $M$ is simply connected, all
$Spin^c$-structures compatible with the same metric are isomorphic.
\begin{remark}
If $n= \dim M = 4$, the map $b: H^1(M,\mbb{Z}_2) \rTo H^1(M,
Spin^c(4))$ is trivial; consequently $\mf{c} \simeq \mf{t} = \Tors_2
H^2(M, \mbb{Z})$. Hence, for any fixed fundamental class $c \in
H^2(M,\mbb{Z})$, there is a unique isomorphism class $[Q_{Spin^c(4)}]$
lifting the couple $([P_{GL_+(4)}], c) \in H^1(M, GL_+(4)) \times
H^2(M, \mbb{Z})$ in remark \ref{rmk:gllift}. In this case the space $\Xi$ 
parametrizes all the $Spin^c$ structures on $M$ of fixed 
fundamental class~$c$. The connected components of the quotient
$\Xi/\Gamma$ are parametrized by $\mf{t}$.
\end{remark}

\begin{remark}\label{rmk:rim}
The spaces $\Met(M)$ and $\Xi$ can be viewed as spaces of global sections of fiber bundles. Indeed, considered the fiber bundles over $M$: 
\begin{gather} Met(M) := \coprod_{x \in M} \Met(T_xM) \\ 
Mor_{\mu}(Q_{Spin^c(n)} , 
P_{GL_+(n)}) := \coprod_{x \in M} \Mor_{\mu}(Q_{Spin^c(n),x} , 
P_{GL_+(n),x})\end{gather}then $\Met(M) = \Gamma(M, Met(M))$ and 
$\Xi = \Gamma (M, Mor_{\mu} (Q_{Spin^c(n)} , 
P_{GL^+(n)}))$. 
Moreover, for each $x \in M$, the projection 
\begin{equation} \label{pw}
\Mor_{\mu}(Q_{Spin^c(n),x} , P_{GL_+(n),x}) \rTo \Met(T_xM)
\end{equation}
 is a trivial 
finite dimensional principal bundle of 
structural group $SO(n)$, noncanonically 
isomorphic to the $SO(n)$-principal bundle $P_{GL_+(n),x}\rTo \Met(T_xM)$. 
The fiberwise projection (\ref{pw}) induces the 
global projection of fiber bundles
$Mor_{\mu}(Q_{Spin^c(n)} , 
P_{GL_+(n)}) \rTo Met(M)$ and the infinite dimensional principal bundle $\Xi \rTo \Met(M)$. 
\end{remark}

\begin{remark}
The spaces $\Met(M)$, of riemannian metrics over $M$, and $\Xi$, 
of $Spin^c$-structures on $M$ of type $Q_{Spin^c(n)}$, 
are infinite dimensional Fr\'echet manifolds, because spaces of global sections of fiber bundles over $M$, as explained in 
\cite{FreedGroisser1989} and, more recently, in \cite{KrieglMichor1997}. 
The projection $\Xi \rTo \Met(M)$ gives $\Xi$ the structure of an 
infinite dimensional Fr\'echet principal bundle with regular 
Fr\'echet-Lie group  
$\Aut(Q_{SO(n)})$ as structure group (see \cite{KrieglMichor1997}, 
Chapter VIII, \S~38-39).
\end{remark}
\begin{remark}
The manifold $\Met(M)$ of riemannian metrics on $M$, can be equipped
with a natural riemannian metric making it a $\infty$-dimensional Frech\'et
riemannian manifold (see \cite{FreedUhlenbeckIFM}, \cite{GilMedranoMichor1991}).
\end{remark}

\subsection{Changes of metric: the natural connection on $\Xi$.}\label{subsect:natconnection}
The group 
$\Aut(P_{GL_+(n)})$ acts freely and transitively (on the right) on the space 
$\Xi$, hence the choice of an element $\xi \in \Xi$ defines an 
isomorphism: $
\Aut(P_{GL_+(n)}) \simeq \Xi$, defined by $
\phi  \rMapsto  \phi^{-1}\circ \xi $ and
such that $g_{\phi^{-1}\circ \xi}=\phi^*g_{\xi}$. 
The choice of $\xi \in \Xi$ determines 
the polar decomposition: 
$$ \Aut(P_{GL_+(n)}) \simeq \Aut(P_{SO(g_{\xi})}) \times 
\Sym^+(P_{SO(g_{\xi})}) \;, $$where we denoted with 
$ \Sym^+(P_{SO(g_{\xi})}) $ 
the space of functions $f: P_{SO(g_{\xi})} \rTo \Sym^+(n)$ such that $f(pg)=g^{-1} f(p) g= {\;}^tg f(p)g$ for all $g \in SO(n)$ and where $\Sym^+(n)$ is the 
space of positive symmetric automorphisms of $\mbb{R}^n$ with respect to the 
standard scalar product.

Consider the section \begin{diagram}[LaTeXeqno,height=.6cm]\label{numero}
\Met(M) & \simeq & \Sym^+(P_{SO(g_{\xi})}) & \rTo^{\; \; \sigma_{\xi}} & \Xi \\
\phi^* g_{\xi} & \lMapsto & \phi & \rMapsto & \phi^{-1} \circ \xi \end{diagram}
Taking the tangent space of the 
image of this section in $\xi$, 
$ H_{\xi} := T_{\xi}( {\rm im} \,\sigma_{\xi})$, defines in a natural way a 
$\Aut(Q_{SO(n)})$-equivariant horizontal distribution in $T\Xi$ and hence a 
connection on $\Xi$, which we will call the \emph{natural connection} 
on $\Xi$. 
It is then natural, when changing the metric $g \in \Met(M)$ along a path 
$g_t$, \emph{to change the $Spin^c$-structure lifting the path to $\Xi$ in a 
parallel way for the natural connection on $\Xi$ just defined.}
\begin{remark} Since $\Xi \rTo \Met(M)$ is an infinite dimensional 
 principal bundle with regular Lie 
group as structural group, the parallel transport exists and it is unique for any connection on $\Xi$. Moreover the 
curvature of a connection can be 
interpreted, as usual, as the obstruction of the 
integrability of the horizontal distribution. See \cite[Chapter VIII, \S 39]{KrieglMichor1997}, 
 for details.
\end{remark}

\paragraph{Parallel transport on $\Xi$ for the natural connection.}
Let $\xi_0$ a given $Spin^c$-structure in $\Xi$ and $g_t = \phi^*_t
g_{\xi_0}$ a 
path of metrics in $\Met(M)$, 
such that $\phi_t \in \Sym^+(TM,g_{\xi_0})$ and $\phi_0 = \id$.
To determine the equation of the parallel transport of $\xi_0$ along the path $g_t$ for the natural connection, 
consider a path of $Spin^c$-structures $\xi_t$ in $\Xi$ starting from $\xi_0$, and subject to the condition $g_{\xi_t}=g_t$. Writing that the path $\xi_t$ is parallel means that 
$\dot{\xi}_t=d/d\lambda |_{\lambda =t }\; ( \theta_{\lambda}^{-1} \circ \xi_t)$, 
$\theta_{\lambda} \in \Sym^+(TM, g_t)$,
$\theta_{\lambda}^* g_t =g_{\lambda}$; indicating with
$\dot{\theta}(t)= d \theta_{\lambda} /d \lambda \trest_{\lambda=t}$,  we have $\dot{\xi}_t = - \dot{\theta}(t) \circ \xi_t$.
Since $2 g_t \dot{\theta}(t) = 
\dot{\phi}_{t}^* g_{\xi_0}$ and consequently  
$\dot{\theta}(t)= 1/2 ~(\phi^*_tg_{\xi_0} )^{-1}  \dot{\phi}_t ^*g_{\xi_0} $, the parallel transport equation reads: 
$$ \dot{\xi}_t = -\frac{1}{2} \left( (\phi^*_tg_{\xi_0} )^{-1}  \dot{\phi}_t ^*g_{\xi_0} 
\right) \circ \xi_t \;.$$
\paragraph{Curvature.}
The following proposition gives the curvature of the 
natural connection on the principal bundle $\Xi \rTo M$. Similar computations 
have been made poitwisely by Bourguignon and Gauduchon \cite{BourguignonGauduchon1992} to compute the 
curvature of the $O(n)$-principal bundle $L(V) \rTo \Met(V)$ of linear frames of a 
real $n$-dimensional vector space $V$ over its cone of metrics  $\Met(V)$. The natural connection on $\Xi$ induces pointwisely connections on the $SO(n)$-principal bundles $Mor_{\mu}(Q_{Spin^c(n), x}, P_{GL_+(n), x}) \simeq P_{GL_+(n),x} \rTo \Met(T_xM)$. With exactly the same proof as in \cite[Lemma 3]{BourguignonGauduchon1992}, we find that the curvature  of this connection is given by: 
$\Omega_{g_x}(h_x,k_x) = -1/4 ~ [g_x^{-1}h_x, g_{x}^{-1}k_x]$ for $g_x \in \Met(T_xM)$, $h_x, k_x \in S^2T^*_xM$. General facts on vector 
fields of spaces of sections of fiber bundles (see \cite[Appendix]{FreedGroisser1989}) imply 
that the curvature $\Omega$ of the bundle 
$\Xi \rTo \Met(M)$ is pointwisely the curvature of the bundle 
$P_{GL_+(n),x} \rTo \Met(T_xM)$: 
$ (\Omega_g(h,k))_x = \Omega_{g_x}(h_x, k_x)$. Therefore we get:
\begin{pps}\label{curv} The curvature of the natural connection of the principal bundle 
$\Xi \rTo M$ is given by:
$$ \Omega_{g_{\xi}}(h,k) = -\frac{1}{4}[g_{\xi}^{-1}h, g_{\xi}^{-1}k] \;,$$in ${\rm ad}(\Xi)_{g_{\xi}} \simeq \mf{so}(TM, g_{\xi})$, for 
$h,k \in  S^2T^*M \simeq T_{g_{\xi}} \Met(M)$.
\end{pps}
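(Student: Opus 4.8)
The plan is to reduce the statement to a finite-dimensional, pointwise curvature computation and then carry out that computation, which is essentially the one of Bourguignon--Gauduchon. By Remark~\ref{rmk:rim} one has $\Xi=\Gamma(M,Mor_{\mu}(Q_{Spin^c(n)},P_{GL_+(n)}))$ and $\Met(M)=\Gamma(M,Met(M))$, and the projection $\Xi\to\Met(M)$ is the section functor $\Gamma(M,-)$ applied to the fiberwise $SO(n)$-principal bundle $Mor_{\mu}(Q_{Spin^c(n),x},P_{GL_+(n),x})\to\Met(T_xM)$, which is noncanonically $P_{GL_+(n),x}\to\Met(T_xM)$ --- the bundle of oriented frames of $T_xM$ over the open cone of inner products on $T_xM$. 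The first step is to check that the natural connection on $\Xi$ is itself induced pointwise: this amounts to observing that $\sigma_{\xi}$ is built from the fiberwise polar decomposition of $\Aut(P_{GL_+(n)})$ relative to $g_{\xi}$, an operation that commutes with evaluation at $x\in M$, so that $H_{\xi}=T_{\xi}(\mathrm{im}\,\sigma_{\xi})$ is, pointwise, the horizontal space at $\xi_x$ of the corresponding connection on $P_{GL_+(n),x}\to\Met(T_xM)$. Granting this, the general facts on vector fields and curvature of spaces of sections of fiber bundles (\cite[Appendix]{FreedGroisser1989}) give $(\Omega_g(h,k))_x=\Omega_{g_x}(h_x,k_x)$, and it remains to compute the curvature of the finite-dimensional bundle $P_{GL_+(n),x}\to\Met(T_xM)$.

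For the pointwise computation, fix $x$, put $V=T_xM$, and use the frame $\xi_x$ and the metric $g_{\xi_x}$ to identify the frame torsor of $V$ with $GL_+(n)$ and $\Met(V)$ with the open cone $\Sym^+(n)\subset M_n(\mathbb{R})$, so that $g_{\xi_x}\leftrightarrow\id$, the projection becomes $N\mapsto(NN^t)^{-1}$, and $SO(n)$ acts on frames on the right. A short calculation shows that the section with base point $N$ has image $N\cdot\Sym^+(n)\subset GL_+(n)$, hence the horizontal subspace at $N$ is $H_N=N\cdot\Sym(n)$ and the vertical one is $N\cdot\mathfrak{so}(n)$ inside $T_N GL_+(n)=M_n(\mathbb{R})$; in other words the natural connection is the canonical (reductive) connection of the symmetric space $\Met(V)=GL_+(n)/SO(n)$ attached to the decomposition $\mathfrak{gl}(n)=\mathfrak{so}(n)\oplus\Sym(n)$. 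For this connection $\Omega_{\id}(X,Y)=-[X,Y]\in\mathfrak{so}(n)$ whenever $X,Y\in\Sym(n)$ (using $[\Sym(n),\Sym(n)]\subseteq\mathfrak{so}(n)$); since the tangent vector $h\in\Sym(n)\simeq T_{g_{\xi_x}}\Met(V)$ has horizontal lift $-\tfrac12 h$ at $\id$ --- the restriction of $d\pi_{\id}$ to $H_{\id}=\Sym(n)$ being multiplication by $-2$ --- we obtain $\Omega_{g_{\xi_x}}(h,k)=-[-\tfrac12 h,-\tfrac12 k]=-\tfrac14[h,k]$. Rewriting this invariantly ($h$ corresponds to the $g_{\xi_x}$-symmetric endomorphism $g_{\xi_x}^{-1}h$, and the bracket lies in $\mathfrak{so}(V,g_{\xi_x})$) gives $\Omega_{g_{\xi_x}}(h_x,k_x)=-\tfrac14[g_{\xi_x}^{-1}h_x,g_{\xi_x}^{-1}k_x]$, and with the preceding paragraph we conclude $\Omega_{g_{\xi}}(h,k)=-\tfrac14[g_{\xi}^{-1}h,g_{\xi}^{-1}k]$ in $\mathrm{ad}(\Xi)_{g_{\xi}}\simeq\mathfrak{so}(TM,g_{\xi})$.

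I expect the main obstacle to be the explicit identification of the polar-decomposition connection with the canonical connection, that is, the computation that the section based at $N$ has image exactly $N\cdot\Sym^+(n)$. One must keep careful track of the two $GL(V)$-actions on frames, of the meaning of the composition $\phi^{-1}\circ\xi$, and of the fact that $\mathrm{im}\,\sigma_{\xi}$ is horizontal only at its own base point $\xi$ --- so the distribution is really ``$T_p(\mathrm{im}\,\sigma_p)$ for the section $\sigma_p$ based at $p$'', extended $\Aut(Q_{SO(n)})$-equivariantly; here the conventions fixed for the parallel transport equation in \S\ref{subsect:natconnection} should be used consistently. The other point requiring care --- rather than genuine difficulty --- is the passage between $\Xi$ and its pointwise model, namely that $\Gamma(M,-)$ interchanges with the constructions of $\sigma_{\xi}$ and $H_{\xi}$, and that the curvature of a connection on a space of sections of a fiber bundle is computed fiberwise (\cite[Appendix]{FreedGroisser1989}). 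Once these structural facts are in place the result is the one-line consequence of the Bourguignon--Gauduchon computation (\cite[Lemma~3]{BourguignonGauduchon1992}).
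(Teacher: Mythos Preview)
Your proposal is correct and follows essentially the same route as the paper: reduce to the pointwise $SO(n)$-bundle $P_{GL_+(n),x}\to\Met(T_xM)$ via the section-space formalism and \cite[Appendix]{FreedGroisser1989}, then invoke the Bourguignon--Gauduchon computation \cite[Lemma~3]{BourguignonGauduchon1992}. The only difference is that you spell out the finite-dimensional step (identifying the polar-decomposition connection with the canonical connection on the symmetric space $GL_+(n)/SO(n)$ and tracking the factor $-\tfrac12$ in the horizontal lift), whereas the paper simply defers that step to the cited lemma.
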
 
\begin{remark}\label{curvrmk} Proposition \ref{curv} implies that the horizontal distribution 
$H_{\xi}$ on $T_{\xi}\Xi$ is \emph{never} integrable. Consequently 
\emph{there are no parallel sections $\sigma: \Met(M) \rTo \Xi$ (even locally).}
\end{remark}
\subsection{Parametrized Dirac operators}
Let $m= [n/2]$. Consider an irreducible $\Cl(\mbb{R}^n)$ representation
$ \rho_0 :  \Cl(\mbb{R}^n) \rTo \End(W_0)$ where $W_0$ is an hermitian 
vector space of complex dimension $2^m$. The bundle of spinors is defined by: 
$W := Q_{Spin^c(n)} \times _{\rho_0} W_0$. The choice of an element $\xi \in \Xi$
induces the Clifford multiplication on the bundle of Clifford algebras 
$\Cl(TM)$:
\begin{equation}\label{moltclifford} \rho_{\xi} : \Cl(TM) \simeq  P_{SO(g_{\xi})} \times_{SO(n)} \Cl(\mbb{R}^n) 
  \rTo Q_{Spin^c(n)} 
\times_{\rho_0} \End(W_0) \simeq \End(W) \;;\end{equation}identifying
of $TM$ and $T^*M$ via $g_{\xi}$ we get as well a Clifford moltiplication on $\Cl(T^*M)$, which we will keep on denoting $\rho_{\xi}$.
We have the following diagram:
\begin{diagram}
Q_{U(1)} &         \lTo^{\beta}     &Q_{Spin^c(n)}&                                     & & & \\
 & & \dTo ^{\eta} & \rdTo^{\alpha_{\xi}} \rdTo(4,2)^{\xi} &  & & & \\
 &      &Q_{SO(n)}    & \rTo_{\gamma_{\xi}}   & P_{SO(g_{\xi})}&\rInto & P_{GL_+(n)} 
\end{diagram}\sloppy
Let $\omega_{g_{\xi}} \in A^1(P_{SO(g_{\xi})}, \mathfrak{so}(n))$ be 
the Levi-Civita connection on $P_{SO(g_{\xi})}$: it induces a 
$SO(n)$-equivariant connection form on $P_{GL_+(n)}$ which we keep on denoting $\omega_{g_{\xi}}$.  
Let   $A \in 
A^1(Q_{U(1)}, \mathfrak{u}(1))$ a $U(1)$-connection form on $Q_{U(1)}$. The $Spin^c$-\emph{connection} 
$\Omega_{A, \xi}$ on $Q_{Spin^c(n)}$ is defined as: 
\begin{equation}\label{spinconnection}
\Omega_{A, \xi} := d\nu ^{-1}(\alpha_{\xi}^* \omega_{g_{\xi}} + \beta^*A) \; 
\end{equation}
seen in  $A^1(Q_{Spin^c(n)}, \mathfrak{spin}^c(n)) $, where $d \nu $
is the isomorphism: 
$\mf{spin}^c \simeq \mf{so}(n) \oplus \mf{u}(1)$. The $Spin^c$-connection form 
$\Omega_{A, \xi}$ defines a connection $\nabla^{W,\xi}_A$ on the 
associated vector 
bundle of spinors $W$ in the following standard way. If $p$ is the projection 
$p : Q_{Spin^c(n)} \rTo M$, the vector bundle $p^*W$ trivializes as: 
$p^*W \simeq Q_{Spin^c(n)} \times W_0$.  The connection $\nabla^{W,\xi}_A$ is then characterized by: 
\begin{equation}\label{nablaspin} p^* \nabla^{W,\xi}_A = d + \Omega_{A,\xi} \;, \end{equation}where
 $\Omega_{A, \xi}$ is seen 
 in $A^1(Q_{Spin^c(n)}, \End(W_0))$ and $d$ is the trivial connection. The Dirac operator 
$D_A^{\xi}$ is then  the composition: 
$$ D_A^{\xi} : \Gamma(W) \rTo^{\nabla^{W,\xi}_A} \Gamma(T^*M \tens W) 
\rTo^{\rho_{\xi}} \Gamma(W) \;.$$Hence we have a family of first order differential operators 
\begin{equation}\label{familydirac}
\mathpzc{D} : \Xi \rTo \mathrm{Diff}^1(W) \;,
\end{equation}given by $\mathpzc{D}(\xi)=D^{\xi}_A$ and parametrized by $Spin^c$-structures $\xi \in \Xi$, \emph{all acting on the same 
vector bundle of spinors} $W$.
\subsection{Parametrized Seiberg-Witten equations}\label{2.5}
Let now be $n=4$. The irreducible $\Cl(\mbb{R}^4)$-module $W_0$ splits 
as the direct sum $W_0^+ \oplus W_0^-$ of irreducible 
$Spin^c(4)$-representations: consequently, the bundle of spinors $W$ 
splits as well in the direct sum of positive and negative spinors: 
$W = W_+\oplus W_-$. 
The determinants of $W_+$ and $W_-$ are 
canonically
identified with the determinant line bundle $L = Q_{U(1)}
\times_{U(1)} \mbb{C}$; we indicate with $i$ the identification 
$\det W_+  \simeq \det W_-$.
The Clifford multiplication $\rho_{\xi}$ induces isomorphisms 
$ \Lambda_{\pm} T^*M \simeq \mathfrak{su}(W_{\pm})$, denoted again 
by $\rho_{\xi}$. 
Let $\mc{C} := \mc{A}_{U(1)}(L) \times \Gamma(W_+) $ be the space of unparametrized configurations and 
$\mc{D} := \Gamma(W_-) \times i\mathfrak{su}(W_+)$; we denote moreover with $\mc{C}^* := \mc{A}_{U(1)}(L) \times (\Gamma(W_+)\setminus \{ 0 \}) $ the irreducible configurations.
In order to consider general parameter spaces, let $T$ be a Fr\'echet
splitting\nota{in the sense of \cite[Definition
  27.11]{KrieglMichor1997}} 
submanifold\nota{We will always consider submanifold $T$ 
given by a space of $C^{\infty}$-sections of a fiber subbundle of 
the fiber bundle $Met(M) \rTo M$ consider in remark \ref{rmk:rim}.}
 of the manifold $\Met(M)$ 
of riemannian metrics on~$M$. Let~$\Xi_T$ be the restriction 
of the principal bundle $\Xi$ to the submanifold $T$. 
The natural connection on~$\Xi$ induces a natural 
connection on $\Xi_T \rTo T$.
The \emph{parametrized Seiberg-Witten equations} for unknowns $(A, \psi, \xi) \in 
\mc{C}\times \Xi_T$ are: 
\begin{subequations}\label{PSW}
\begin{gather}\label{psw1}
D_A^{\xi} \psi =  0 \\\label{psw2} 
\rho_{\xi}(F_A^{+, g_{\xi}}) - [\psi^* \tens \psi]_0 =  0
\end{gather}
\end{subequations}We denote with $\mbb{F}_T: \mc{C} \times \Xi_T \rTo \mc{D}$ the functional defining the equations 
and with $\mc{S}_T$ the space of solutions. 
The group $\Aut(Q_{Spin^c(4)})$ acts on
$W$ via isometries, respecting the decomposition in positive and negative spinors and the identification $i$ between the determinants $\det W_+$ and $\det W_-$; if we denote with 
$U(W_+)\times_0 U(W_-)$ the bundle of groups of pairs $(f_+,f_-) \in U(W_+)\times U(W_-)$ with the same determinant, we have $\Aut(Q_{Spin^c(4)}) \simeq 
C^{\infty}(M,U(W_+)\times_0 U(W_-))$. Consequently $\Aut(Q_{Spin^c(n)})$ acts 
(on the right) on $\mc{C} \times \Xi_T$ and $\mc{D}$, setting: 
\begin{gather*}
(A, \psi, \xi)\cdot f := (\beta(f)^* A, f^{-1} \psi, f^*\xi) \\
(\chi, \eta)\cdot f := (f^{-1} \chi, f^{-1} \eta f)
\end{gather*}where $\beta(f)$ denotes the image of $f$ for the morphism 
$\Aut(Q_{Spin^c(4)}) \rTo \Aut(Q_{U(1)})$ induced by the projection $\beta$.
The restriction of this action to the group $\mc{G}:= C^{\infty}(M, S^1) \subset \Aut(Q_{Spin^c(4)})$ coincides 
with the 
classical action of the Seiberg-Witten gauge group 
on $\mc{C}$. The functional
$\mbb{F}_T$ is equivariant for the $\Aut(Q_{Spin^c(4)})$-action;
\begin{equation} \label{equi}\mbb{F}_T((A, \psi, \xi) \cdot f)= \mbb{F}_T(A, \psi, \xi) \cdot f \;.\end{equation}
As a consequence the group $\Aut(Q_{Spin^c(n)})$ preserves the solutions 
$\mc{S}_T$ of the equations (\ref{PSW}). Moreover the actions of $\mc{G}$
 and of $\Aut(Q_{Spin^c(4)})$ commute, so that we can form a 
\emph{parametrized Seiberg-Witten moduli space} $\mc{M}_T:= \mc{S}_T/\mc{G}$, 
fibered over $\Xi_T$: 
\begin{equation}\label{fibbr}
\pi_T: \mc{M}_T \rTo \Xi_T \;,
\end{equation}
equipped with a $\Gamma$-action, making the preceding fibration equivariant.
The fiber $\mc{M}_{\xi}= \pi_T^{-1}(\xi)$ of the fibration (\ref{fibbr}) over $\xi$ is exactly the standard
Seiberg-Witten moduli space for to the $Spin^c$-structure $\xi \in \Xi_T$. 
\section{Variation of the Dirac operator}
This section is devoted to the computation of the variation of the Dirac operator with respect to the metric, by means of the formalism introduced in 
section \ref{section2}:
what we will say in this section 
holds for any $n \geq 1$.  Given a particular 
$Spin^c$-structure $\xi_0 \in \Xi$,  we compute the 
differential $D_{\xi_0}\mathpzc{D}\trest_{H_{\xi_0}}$
in the point $\xi_0$ of the family $\mathpzc{D}$ of differential 
operators (\ref{familydirac}) restricted to the horizontal direction 
$H_{\xi_{0}}$ for the natural connection on $\Xi$; 
this amounts to compute the differential at the 
identity of the family of differential operators: 
\begin{diagram}[height=0.6cm]
 \mathpzc{D} \circ \sigma_{\xi}: 
\Sym^+(TM,g_{\xi}) & \rTo &\mathrm{Diff}^1(W) \\
\qquad \quad \phi & \rMapsto  & D_A^{ \phi^{-1} \circ \xi}
\end{diagram}
Let us compute $D_{\id}(\mathpzc{D} \circ \sigma_{\xi})(s)$ for $s \in \sym(TM, g_{\xi}) \simeq T_{\id}\Sym^+(TM,g_{\xi})$. Consider the path in $\Sym^+(TM,g_{\xi})$ given by 
$\phi_t = \id + ts$ for small $t$. Let $g_t$ be the metric $g_t:= \phi_t^* g_{\xi}$. Let 
$k = dg_t /dt |_{t=0} = 2gs$. Set $\varphi_t = \phi_t^{-1}$, seen in 
$\Aut(P_{GL_+(n)})$. Since by definition of the Clifford multiplication 
on the cotangent bundle $T^*M$, we have $\rho_{\varphi_t \circ \xi} = \rho_{\xi} \circ \varphi^*_t$, 
the searched differential is: 
\begin{eqnarray*} D_{\id}(\mathpzc{D} \circ \sigma_{\xi})(s) & = & 
\ddt \rest_{t=0} D_A^{\varphi_t \circ \xi}  = 
\left( \ddt \rest_{t=0} {\rho_{\xi} \circ \varphi^*_t} \right)  \circ \nabla_A^{W, \xi} + \rho_{\xi} \circ \ddt \rest_{t=0}{\nabla_A^{  W, \varphi_t \circ \xi} }  \\ 
 & = & - \rho_{\xi} \circ s^* \circ \nabla_A^{W, \xi} + 
\rho_{\xi} \circ \dot{\nabla}_A^{W} (s) \;,
\end{eqnarray*}where $\varphi^*_t$ and $s^*$ are the transposed of $\varphi_t$ and $s$, respectively.  We compute now the variation of the 
spinorial connection~$\dot{\nabla}_A^{W} (s)$.
\subsection{Variation of the spinorial connection}
The following lemma contains the wanted result about the variation of the 
spinorial connection. Let $\nabla^{g_{\xi}}$ be the Levi-Civita connection on 
$TM$ for the metric $g_{\xi}$.
\begin{lemma}The differential of the map: $\Xi \rTo  
\mc{A}(W) $ sending $\zeta$ to $\nabla_{A}^{W, \zeta}$
in the point $\xi$ along the horizontal direction corresponding to the variation of the metric $g_{\xi}$ in the direction 
 $s \in \sym(TM , g_{\xi})$, is given by: 
$$  \dot{\nabla}_A^{W} (s) = \frac{1}{2} \rho_{\xi}(
\dot{\nabla}(s) - \nabla^{g_{\xi}} s) \in A^1(M , 
\mathfrak{su}(W) ) $$where $\dot{\nabla}(s)$ denotes the variation of the Levi-Civita connection in the point $g_{\xi}$ along the direction $s$ and  where we see the form 
$\dot{\nabla}(s) - \nabla^{g_{\xi}} s$ in $A^1(M, 
\mathfrak{so}(TM, g_{\xi}))$.
\end{lemma}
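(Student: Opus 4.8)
The plan is to use the explicit formula $\Omega_{A,\xi}=d\nu^{-1}(\alpha_\xi^*\omega_{g_\xi}+\beta^*A)$ for the $Spin^c$-connection and the defining relation $p^*\nabla_A^{W,\xi}=d+\Omega_{A,\xi}$ for the induced connection on the spinor bundle $W$. Along the horizontal path $\xi_t:=\varphi_t\circ\xi$ with $\varphi_t=\phi_t^{-1}$, $\phi_t=\id+ts$, the $U(1)$-summand $\beta^*A$ and the map $\beta$ do not depend on the $Spin^c$-structure, so \emph{only the Levi--Civita summand $\alpha_{\xi_t}^*\omega_{g_t}$ varies}. Hence $\dot\nabla_A^W(s)$ is the image under $\rho_0\circ d\nu^{-1}$ of $\tfrac{d}{dt}\big|_{t=0}\alpha_{\xi_t}^*\omega_{g_t}$, and the factor $\tfrac12$ of the statement is precisely the one with which $d\nu^{-1}$ identifies the $\mathfrak{so}(n)$-summand of $\mathfrak{spin}^c(n)$ with half the degree-two part of $\Cl(\mbb{R}^n)$ --- equivalently, the one under which $\rho_0\circ d\nu^{-1}$, on the $\mathfrak{so}(n)$-summand, becomes $\tfrac12\rho_\xi$ on $\mathfrak{so}(TM,g_\xi)\simeq\Lambda^2T^*M$.

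First I would separate the two sources of variation. Writing $\xi=\iota\circ\alpha_\xi$ with $\iota\colon P_{SO(g_\xi)}\hookrightarrow P_{GL_+(n)}$ the inclusion, one has $\xi_t=\varphi_t\circ\iota\circ\alpha_\xi$; since $\varphi_t$ carries $P_{SO(g_\xi)}$ onto $P_{SO(g_t)}=P_{SO(\phi_t^*g_\xi)}$ (by the relation $g_{\phi^{-1}\circ\xi}=\phi^*g_\xi$ recalled in \S\ref{subsect:natconnection}), this factors as $\xi_t=\iota_t\circ\varphi_t'\circ\alpha_\xi$ with $\varphi_t'\colon P_{SO(g_\xi)}\to P_{SO(g_t)}$ the corestriction of $\varphi_t$, so $\alpha_{\xi_t}=\varphi_t'\circ\alpha_\xi$ and $\alpha_{\xi_t}^*\omega_{g_t}=\alpha_\xi^*\big((\varphi_t')^*\omega_{g_t}\big)$. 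Because $\omega_{g_t}$ reduces to the subbundle $P_{SO(g_t)}$ and $\varphi_t'$ is a bundle isomorphism onto it, $(\varphi_t')^*\omega_{g_t}$ is an honest $\mathfrak{so}(n)$-valued connection form on the \emph{fixed} bundle $P_{SO(g_\xi)}$; equivalently it is a path, starting at $\omega_{g_\xi}$, of $g_\xi$-metric connections on $TM$, namely $\nabla_t:=\varphi_t^{-1}\circ\nabla^{g_t}\circ\varphi_t$ (its $g_\xi$-compatibility being immediate from $g_t=\phi_t^*g_\xi$).

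The heart of the matter is to differentiate $t\mapsto\nabla_t$ at $t=0$. A difference of connections is tensorial, and the Leibniz rule, with $\varphi_0=\id$, $\tfrac{d}{dt}\big|_{t=0}\varphi_t=-s$ and $\tfrac{d}{dt}\big|_{t=0}\varphi_t^{-1}=s$, gives $\tfrac{d}{dt}\big|_{t=0}\nabla_t=s\circ\nabla^{g_\xi}+\dot\nabla(s)-\nabla^{g_\xi}\circ s=\dot\nabla(s)-[\nabla^{g_\xi},s]$, where $\dot\nabla(s)=\tfrac{d}{dt}\big|_{t=0}\nabla^{g_t}$ is the variation of the Levi--Civita connection (for the metric variation $\dot g=2g_\xi s$). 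Since $[\nabla^{g_\xi},s](X)=\nabla^{g_\xi}_X s$ is just the covariant derivative of the $(1,1)$-tensor $s$, this equals $\dot\nabla(s)-\nabla^{g_\xi}s$; and because every $\nabla_t$ is $g_\xi$-metric, this difference lies in $A^1(M,\mathfrak{so}(TM,g_\xi))$, even though neither $\dot\nabla(s)$ nor $\nabla^{g_\xi}s$ is skew-symmetric on its own. Feeding this back through $\rho_0\circ d\nu^{-1}$ (which on the $\mathfrak{so}(n)$-summand is $\tfrac12\rho_\xi$), and using that $\rho_\xi$ sends $\mathfrak{so}(TM,g_\xi)\simeq\Lambda^2T^*M$ into $\mathfrak{su}(W)$ (Clifford products of real $2$-forms are trace-free and skew-Hermitian), gives $\dot\nabla_A^W(s)=\tfrac12\rho_\xi(\dot\nabla(s)-\nabla^{g_\xi}s)\in A^1(M,\mathfrak{su}(W))$, as claimed.

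I expect the main obstacle to be the bookkeeping of the second paragraph: correctly combining the variation of the equivariant lift $\alpha_\xi$ with that of the Levi--Civita connection $\omega_{g_t}$, and checking that their combination $(\varphi_t')^*\omega_{g_t}$ really does restrict to a path of connections on the \emph{fixed} bundle $P_{SO(g_\xi)}$. This is exactly what forces the answer into $\mathfrak{so}(TM,g_\xi)$, hence into $\mathfrak{su}(W)$, and it is also where the ``gauge'' term $-\nabla^{g_\xi}s$ (coming from $\varphi_t=(\id+ts)^{-1}$) appears alongside the pure metric variation $\dot\nabla(s)$. A secondary, purely algebraic point is pinning down the constant $\tfrac12$, which rests on the normalization of the spin representation relative to Clifford multiplication noted at the end of the first paragraph.
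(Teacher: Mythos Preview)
Your proposal is correct and follows essentially the same route as the paper: both differentiate the $Spin^c$-connection form $\Omega_{A,\xi_t}$, observe that only the Levi--Civita summand varies, recognize $(\varphi_t')^*\omega_{g_t}$ as the connection form on the fixed bundle $P_{SO(g_\xi)}$ inducing $\varphi_t^{-1}\nabla^{g_t}\varphi_t$ on $TM$, and differentiate the latter to obtain $\dot\nabla(s)-\nabla^{g_\xi}s$. The only cosmetic difference is that the paper phrases the intermediate step in terms of pseudotensorial versus tensorial forms on $P_{SO(g_\xi)}$, whereas you spell out the Leibniz computation $\tfrac{d}{dt}\big|_{t=0}(\varphi_t^{-1}\nabla^{g_t}\varphi_t)=\dot\nabla(s)-[\nabla^{g_\xi},s]$ directly; the content is the same.
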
\begin{proof} \sloppy We first compute the 
variation of the spinorial 
connection form $\Omega_{A, \xi}$ on $Q_{Spin^c(n)}$. Let $\omega_{g_t}$ the Levi-Civita connection form on $P_{SO(g_{t})}$, seen as a connection form in 
$A^1(P_{GL_+(n)}, \mathfrak{gl}(n))$. 
The spinorial connection form is defined as: $$\Omega_{A, \varphi_t \circ \xi} = 
\rho_0 d\nu ^{-1} ((\varphi_t \circ \xi)^* \omega_{g_t} + \beta^*A ) \;.$$
Differentiating this relation in $t=0$ we obtain: 
\begin{eqnarray*} \dot{\Omega}_{A, \xi} & = & \rho_0 d\nu ^{-1} 
\ddt \rest_{t=0} (\varphi_t \circ \xi)^* \omega_{g_t} = \rho_0 d\nu ^{-1} \xi^* \ddt \rest_{t=0} ( \varphi^*_t \omega_{g_t}) \;.
\end{eqnarray*}It is  straightforward to remark that the form $\varphi^*_t \omega_{g_t}$ is defined on $P_{SO(g_{\xi})}$ and is there pseudotensorial of 
type~$(\mathrm{ad}, \mathfrak{so}(n))$; hence its derivative $d (\varphi^*_t \omega_{g_t} )/dt |_{t=0}$ is tensorial of type~$((\mathrm{ad}, \mathfrak{so}(n))$.
Consequently, if $q$ is the projection $q: P_{SO(g_{\xi})} \rTo M$, 
there exists a form $\dot{\omega_M} \in A^1(M, 
\mathfrak{so}(TM, g_{\xi}))$ such that $$ \ddt 
(\varphi^*_t \omega_{g_t} ) \rest_{t=0} = q^*(\dot{\omega}_M) \in A^1(P_{SO(g_{\xi})}, \mf{so}(n))\;.$$ 
Therefore:  \begin{equation*}\dot{\Omega}_{A, \xi} = 
\rho_0 d \nu^{-1} \xi^* q^*  (\dot{\omega}_M) = \rho_0 d
\nu^{-1} p^*( \dot{\omega}_M ) = \frac{1}{2} p^* ( 
\rho_{\xi}( \dot{\omega}_M ))\;.\end{equation*}where we recall that $p$
denotes the projection $Q_{Spin^c(n)} \rTo M$, and where 
$\rho_{\xi}$ acts on $\dot{\omega}_M$ via the isomorphism $\mf{so}(TM, g_{\xi}) \simeq 
\Cl_2(T^*M)$; the factor $1/2$ comes from the fact that $\nu$ is a $2:1$ 
covering.
Differentiating the relation~(\ref{nablaspin}) characterizing $\nabla_A^{W, \varphi_t \circ \xi}$, we get
$p^*(\dot{\nabla}^W_A) = (1/2)\; p^*(\rho_{\xi}( \dot{\omega}_M ) )$, yielding :
$$ \dot{\nabla}_A^W (s) =   \frac{1}{2} \rho_{\xi}( \dot{\omega}_M ) \;.$$
It remains now to determine the form $\dot{\omega}_M$; but this comes 
from the fact that $\varphi_t^* \omega_t$ is the connection form on 
$P_{SO(g_{\xi})}$ inducing the connection $\varphi_t^{-1} \nabla^{g_t} 
\varphi_t$ on $TM$: consequently its derivative $p^*(\dot{\omega}_M)$ 
 induces the form $\dot{\omega}_M = d (\varphi_t^{-1} \nabla^{g_t} 
\varphi_t) /dt |_{t=0}$, that is the form $ \dot{\omega}_M=\dot{\nabla}(s) - 
\nabla^{g_{\xi}} s \in A^1(M, \mathfrak{so}(TM, g_{\xi}))$, because the connection $\varphi_t^{-1} \nabla^{g_t} 
\varphi_t$ is compatible with the metric $g_{\xi}$ for all $t$.  \end{proof}
\subsection{Variation of the Levi-Civita connection}
In order to explicitely compute the variation $\dot{\nabla}^W_A(s)$ we only need the computation of the variation of the Levi-Civita connection 
$\dot{\nabla}(s)$ in the point $g_{\xi}$ along the symmetric tensor $s$. This is well known (see~\cite[Th. 1.174]{BesseEM}): 
\begin{pps}The variation of the Levi-Civita connection $\nabla^{g_\xi}$ on 
the tangent bundle $TM$ along the direction 
$s \in \sym(TM, g_{\xi})$ is the form $\dot{\nabla}(s) \in 
 A^1(M, \End(TM))$ given by: 
\begin{equation}\label{vlc} g_{\xi}(\dot{\nabla}(s)_X Y, Z) =   g_{\xi}( (\nabla^{g_{\xi}}_X s)Y,Z) -
g_{\xi}( (\nabla^{g_{\xi}}_Z s)Y,X) + g_{\xi}( (\nabla^{g_{\xi}}_Y s)Z,X)
 \;.\end{equation}
 \end{pps}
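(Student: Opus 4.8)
The plan is to obtain \eqref{vlc} by differentiating at $t=0$, along a path of metrics with the prescribed initial velocity, the two axioms characterising the Levi-Civita connection. Write $g:=g_\xi$ and $\nabla:=\nabla^{g_\xi}$, and set $k:=2gs\in S^2T^*M$, i.e.\ $k(X,Y)=2g(sX,Y)$; concretely one may take the path $g_t:=(\id+ts)^*g$ used in Section~3, for which $\dot{g}_0=k$. Since for each $t$ the difference $\nabla^{g_t}-\nabla$ of two linear connections is $C^\infty(M)$-bilinear in its two arguments, its derivative $\dot{\nabla}(s):=\frac{d}{dt}\big|_{t=0}\nabla^{g_t}$ is a well-defined tensor, i.e.\ an element of $A^1(M,\End(TM))$; and since $g\mapsto\nabla^g$ depends only on $g$ and its first derivatives, $\dot{\nabla}(s)$ depends only on $k=\dot{g}_0$. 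This makes the differentiations below pointwise and legitimate.

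First I would differentiate the torsion-free identity $\nabla^{g_t}_X Y-\nabla^{g_t}_Y X=[X,Y]$ at $t=0$: the right-hand side does not depend on $t$, so $\dot{\nabla}(s)_X Y=\dot{\nabla}(s)_Y X$, that is, $\dot{\nabla}(s)$ is symmetric in its two lower slots. Next I would differentiate the compatibility identity $X\,g_t(Y,Z)=g_t(\nabla^{g_t}_X Y,Z)+g_t(Y,\nabla^{g_t}_X Z)$; recognising, via $\nabla g=0$, that $X\,k(Y,Z)-k(\nabla_X Y,Z)-k(Y,\nabla_X Z)=(\nabla_X k)(Y,Z)$, this gives
\[
(\nabla_X k)(Y,Z)=g(\dot{\nabla}(s)_X Y,Z)+g(\dot{\nabla}(s)_X Z,Y)\;.
\]

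Now set $T(X,Y,Z):=g(\dot{\nabla}(s)_X Y,Z)$. The two facts just obtained say that $T$ is symmetric in $(X,Y)$ and that $T(X,Y,Z)+T(X,Z,Y)=(\nabla_X k)(Y,Z)$. Applying the standard Koszul cyclic trick --- add this identity to its cyclic permutation $(X,Y,Z)\mapsto(Y,Z,X)$ and subtract the permutation $(X,Y,Z)\mapsto(Z,X,Y)$, then repeatedly use the symmetry of $T$ in its first two slots --- the left-hand side collapses to $2\,T(X,Y,Z)$, yielding
\[
2\,g(\dot{\nabla}(s)_X Y,Z)=(\nabla_X k)(Y,Z)+(\nabla_Y k)(Z,X)-(\nabla_Z k)(X,Y)\;.
\]
Finally I would substitute $k=2gs$, so that $(\nabla_\bullet k)(\cdot,\cdot)=2\,g((\nabla_\bullet s)\cdot,\cdot)$ (again by $\nabla g=0$), and use that $\nabla_\bullet s$ is $g$-symmetric --- being the covariant derivative of the $g$-symmetric tensor $s$ --- to rewrite $(\nabla_Z k)(X,Y)=2\,g((\nabla_Z s)Y,X)$; dividing by $2$ then produces exactly \eqref{vlc}.

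I do not expect a genuine obstacle here: this is the classical computation (cf.\ \cite[Th.~1.174]{BesseEM}), and the only points deserving a word of care are the tensoriality of $\dot{\nabla}(s)$, which is what legitimises differentiating the defining identities pointwise, and the sign bookkeeping in the cyclic permutation. The latter can be made transparent by fixing an arbitrary point $x\in M$ and taking the local vector fields $X,Y,Z$ to be $\nabla$-parallel at $x$, so that at $x$ every covariant-derivative term reduces to an ordinary directional derivative.
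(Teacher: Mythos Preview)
Your argument is correct: differentiating the torsion-free and metric-compatibility identities and then applying the Koszul cyclic trick is the standard route to \eqref{vlc}, and your bookkeeping checks out (in particular the collapse to $2T(X,Y,Z)$ and the final use of the $g$-symmetry of $\nabla_Z s$). Note that the paper does not actually supply its own proof of this proposition --- it simply records the formula as well known and refers to \cite[Th.~1.174]{BesseEM} --- so your derivation is precisely the classical one that the citation points to, not a different approach.
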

We now express formula (\ref{vlc}) in terms of a local orthonormal frame 
$e_i \in TM$, $i=1, \dots,n$ for the metric $g_{\xi}$. Let $e^i$ its dual frame. Let $\tau_{ij}^k$ and $c_{ij}^k$ the components of the tensors $\dot{\nabla} s$ and $\nabla^{g_{\xi}} s$, respectively,  with respect to 
the frame $e_i$: 
$$ \tau_{ij}^k = g_{\xi}(\dot{\nabla}(s)_{e_i} e_j, e_k) \qquad c^k_{ij}= 
g_{\xi}(\nabla^{g_{\xi}}_{e_i} s (e_j), e_k) \;.$$The tensor $c_{ij}^k$ is symmetric in $j$,$k$ because the Levi-Civita connection $\nabla^{g_{\xi}}$ preserves the bundle of symmetric endomorphisms $\sym(TM,g_{\xi})$ for the metric $g_{\xi}$. 
In the frame $e_i$, formula~(\ref{vlc}) reads:
$$ \tau_{ij}^k =  c_{ij}^k
-c^i_{kj} + c^{i}_{jk}  \;.$$The tensor $\tau_{ij}^k$ is symmetric in $i,j$. 
The components of the tensor $\dot{\omega}_M = \dot{\nabla}(s) - \nabla^{g_{\xi}} s$ are:
$$ \dot{\omega}_{ij}^k =  c^{i}_{jk}-c_{kj}^i \;.$$The tensor 
$\dot{\omega}^k_{ij}$ is skew-symmetric in $j$ and $k$, and hence belongs to 
$A^1(M, \mathfrak{so}(TM, g_{\xi}))$, as expected. 
\subsection{Variation of the Dirac operator}
We can now continue the computation began in the introduction of this section. 
Let us compute $\rho_{\xi} \circ \dot{\nabla}^W_A \varphi$ for a spinor $\varphi 
\in \Gamma(W)$. In the chosen local orthonormal frame $e_i$, we denote with $E^k_j$ the endomorphism $e^j \tens e_k -e^k \tens e_j$ in $\mathfrak{so}(TM, g_{\xi})$. We know that: 
$$ 
\dot{\omega}_M= \sum_{ijk} \dot{\omega}^k_{ij} e^i \tens e^j \tens e_k = \frac{1}{2}
\sum_{ijk} \dot{\omega}^k_{ij} e^i \tens E^j_k
\;,
$$therefore:
$$ \dot{\nabla}_A^W \varphi = \frac{\rho_{\xi}( \dot{\omega_M})}{2} \varphi = 
\frac{1}{4} \sum_{ijk} \dot{\omega}^k_{ij} e^i \tens \rho_{\xi}(e^j e^k) \varphi 
\;.$$
Consequently the term $\rho_{\xi} \circ \dot{\nabla}^W_A \varphi$ is: 
$$ \rho_{\xi} \circ \dot{\nabla}^W_A \varphi = 
\frac{1}{4} \sum_{ijk} \dot{\omega}^k_{ij}  \rho_{\xi}(e^i e^j e^k) \varphi \;.$$
Recalling now that $\dot{\omega}_{ij}^k = \tau_{ij}^k - c^k_{ij}$,
that 
$\tau^k_{ij}$ is symmetric in $i,j$ and that $c^k_{ij}$ is symmetric in $j,k$:
\begin{align*} \rho_{\xi} \circ \dot{\nabla}^W_A \varphi = \;&
-\frac{1}{4}\sum_{ij} \tau^j_{ii} \rho_{\xi}(e^j)  \varphi +
\frac{1}{4} 
\sum_{ij}c^i_{ji} \rho_{\xi}(e^j) \varphi \;. \end{align*}
Recalling the definition of $\tau_{ij}^k$ and that of $c_{ij}^k$, we have that 
$ \sum_{ij}\tau_{ii}^j e^j = 2\di s  - d \tr s$ and $\sum_{ij}
c_{ji}^i e^j = 
d \tr s$. 
Hence we get: 
$$ \rho_{\xi} \circ \dot{\nabla}^W_A \varphi =
 - \frac{1}{2}\rho_{\xi}(\di s -d\tr s) \varphi \;.$$
We proved the theorem: 
\begin{theorem}
The variation of the family $\mathpzc{D}$ of Dirac operators: $\Xi \ni \xi \rTo D_A^{\xi} \in \mathrm{Diff}^1(W)$ in the point $\xi$ along the horizontal direction 
corresponding to the variation of the metric $g_{\xi}$ in the direction $s \in \sym(TM, g_{\xi})$ is the first order differential operator given by:
\begin{equation}\label{Dirac} \ddt D_A ^{\xi_t} \rest_{t=0} = -\rho_{\xi} \circ s^* \circ \nabla^{W, \xi}_A - \frac{1}{2}
\rho_{\xi}(\di s -d\tr s) \;.
 \end{equation}
\end{theorem}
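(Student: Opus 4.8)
The plan is to assemble three ingredients: (i) the splitting of $\ddt D_A^{\xi_t}\rest_{t=0}$ into a Clifford‑multiplication term and a connection term already isolated in the opening of this section, (ii) the Lemma above identifying the variation of the spinorial connection, and (iii) the classical formula \eqref{vlc} for the variation of the Levi‑Civita connection. Concretely, I would write $D_A^\xi=\rho_\xi\circ\nabla^{W,\xi}_A$ and differentiate along the path of $Spin^c$‑structures $\xi_t=\varphi_t\circ\xi$, where $\varphi_t=\phi_t^{-1}$, $\phi_t=\id+ts$, so that $g_{\xi_t}=\phi_t^*g_\xi$ varies in the direction $s$. The Leibniz rule splits the variation; since by the definition of Clifford multiplication on $T^*M$ one has $\rho_{\varphi_t\circ\xi}=\rho_\xi\circ\varphi_t^*$, differentiating gives $\ddt\rest_{t=0}(\rho_\xi\circ\varphi_t^*)=-\rho_\xi\circ s^*$, whence
\[ \ddt D_A^{\xi_t}\rest_{t=0}=-\rho_\xi\circ s^*\circ\nabla^{W,\xi}_A+\rho_\xi\circ\dot\nabla^W_A(s). \]
Everything then reduces to identifying $\dot\nabla^W_A(s)$; I would sketch the proof of the Lemma, as it is the crux.

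For that, pull the spinorial connection form $\Omega_{A,\xi}$ back to $Q_{Spin^c(n)}$, where it is $\rho_0\,d\nu^{-1}$ applied to $\alpha_\xi^*\omega_{g_\xi}+\beta^*A$. As the $U(1)$‑connection $A$ is held fixed, the only surviving term in $\dot\Omega_{A,\xi}$ is $\rho_0\,d\nu^{-1}\xi^*\ddt\rest_{t=0}(\varphi_t^*\omega_{g_t})$. Now $\varphi_t^*\omega_{g_t}$ is defined on $P_{SO(g_\xi)}$ and is there pseudotensorial of type $(\ad,\mf{so}(n))$, so its $t$‑derivative is \emph{tensorial} and descends to a well‑defined $\dot\omega_M\in A^1(M,\mf{so}(TM,g_\xi))$; since the connection $\varphi_t^{-1}\nabla^{g_t}\varphi_t$ is $g_\xi$‑compatible for every $t$, one identifies $\dot\omega_M=\dot\nabla(s)-\nabla^{g_\xi}s$, with $\dot\nabla(s)$ the variation of the Levi‑Civita connection. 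The $2{:}1$ character of $\nu$ yields the factor $\tfrac12$, so $\dot\nabla^W_A(s)=\tfrac12\,\rho_\xi(\dot\nabla(s)-\nabla^{g_\xi}s)$, into which I substitute formula \eqref{vlc}.

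The last step is the algebra in a local $g_\xi$‑orthonormal frame $e_i$ with dual $e^i$. Denote by $\tau^k_{ij}$ and $c^k_{ij}$ the components of $\dot\nabla s$ and $\nabla^{g_\xi}s$; then \eqref{vlc} reads $\tau^k_{ij}=c^k_{ij}-c^i_{kj}+c^i_{jk}$, with $\tau^k_{ij}$ symmetric in $i,j$ and $c^k_{ij}$ symmetric in $j,k$, so $\dot\omega^k_{ij}=\tau^k_{ij}-c^k_{ij}=c^i_{jk}-c^i_{kj}$ is skew in $j,k$ (confirming that $\dot\omega_M$ takes values in $\mf{so}(TM,g_\xi)$). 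Then $\dot\nabla^W_A\varphi=\tfrac14\sum_{ijk}\dot\omega^k_{ij}\,e^i\tens\rho_\xi(e^je^k)\varphi$ for $\varphi\in\Gamma(W)$, and Clifford‑multiplying by $e^i$ gives $\rho_\xi\circ\dot\nabla^W_A\varphi=\tfrac14\sum_{ijk}\dot\omega^k_{ij}\,\rho_\xi(e^ie^je^k)\varphi$. The point is that $\dot\omega^k_{ij}$ has no totally antisymmetric part (its summands are symmetric in $i,j$ resp. $j,k$), so by the Clifford anticommutation relations the totally antisymmetric part of $\rho_\xi(e^ie^je^k)$ drops out and each summand contracts against its own symmetric index pair to leave only a $\delta$‑contraction: one gets $-\tfrac14\sum_{ij}\tau^j_{ii}\,\rho_\xi(e^j)\varphi+\tfrac14\sum_{ij}c^i_{ji}\,\rho_\xi(e^j)\varphi$. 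Finally $\sum_{ij}\tau^j_{ii}\,e^j=2\di s-d\tr s$ and $\sum_{ij}c^i_{ji}\,e^j=d\tr s$, so $\rho_\xi\circ\dot\nabla^W_A\varphi=-\tfrac12\,\rho_\xi(\di s-d\tr s)\varphi$; substituting into the first display yields \eqref{Dirac}.

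I expect the main obstacle to be precisely this last contraction: keeping track of which pieces of the Clifford triple product $\rho_\xi(e^ie^je^k)$ survive once paired against the two distinct symmetry types carried by $\dot\omega^k_{ij}$, and then recognizing the surviving terms as $\di s$ and $d\tr s$ with the correct signs and the factors $\tfrac14,\tfrac12$. A secondary delicate point, in the middle step, is the descent argument: justifying that $\ddt\rest_{t=0}(\varphi_t^*\omega_{g_t})$, a priori only pseudotensorial upstairs on $P_{SO(g_\xi)}$, is genuinely tensorial and hence defines the $\mf{so}(TM,g_\xi)$‑valued $1$‑form $\dot\omega_M=\dot\nabla(s)-\nabla^{g_\xi}s$ on $M$, which uses the $g_\xi$‑compatibility of $\varphi_t^{-1}\nabla^{g_t}\varphi_t$ for all $t$.
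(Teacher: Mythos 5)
Your proposal is correct and follows essentially the same route as the paper: the Leibniz splitting into $-\rho_\xi\circ s^*\circ\nabla^{W,\xi}_A+\rho_\xi\circ\dot\nabla^W_A(s)$, the lemma identifying $\dot\nabla^W_A(s)=\tfrac12\rho_\xi(\dot\nabla(s)-\nabla^{g_\xi}s)$ via the tensoriality of $\ddt\rest_{t=0}(\varphi_t^*\omega_{g_t})$, and the orthonormal-frame Clifford contraction using the symmetries of $\tau^k_{ij}$ and $c^k_{ij}$ to reduce to $-\tfrac12\rho_\xi(\di s-d\tr s)$. All intermediate identities, signs and factors match the paper's computation.
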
The result agrees with the one obtained by Bourguignon and Gauduchon (see \cite[Th. 21]{BourguignonGauduchon1992}).
\section{Variation of the Seiberg-Witten equations}\label{sec:vswe}
The aim of this section is to compute the full differential of the 
\emph{universal Seiberg-Witten 
functional} 
$\mbb{F}_{\Met(M)}: \mc{A}_{U(1)}(L) \times \Gamma(W_+) \times \Xi \rTo 
\Gamma(W_-) \times i
\mathfrak{su}(W_+)$
on a solution $(A, \psi, \xi)$. 
We will denote with $\mbb{F}_1$ and $\mbb{F}_2$ the components of $\mbb{F}_{\Met(M)}$
with values in $\Gamma(W_-)$ and $i\mf{su}(W_+)$, respectively.
We will  use the splitting $T\Xi \simeq V \oplus H$ 
defined by the natural connection on $\Xi$.
The most interesting part of this computation is the one dealing with the variation 
of $\mbb{F}_{\Met(M)}$ along the horizontal direction, that is, the perturbation of the metric; the difficult point was the 
variation of the Dirac operator, treated in the previous section.

We  study here the variation of the equation (\ref{psw2}) with respect to the 
metric. Since the Clifford multiplication on $\Lambda^2T^*M$ transforms 
as $\rho_{\phi^{-1}\circ \xi} = \rho_{\xi} \circ (\Lambda^2 \phi^*)^{-1}$,
we have to differentiate the map: 
\begin{diagram}[height=0.6cm] \mbb{F}_2(A, \psi, \sigma_{\xi}(-)):  
\Sym^+(TM, \phi^* g_{\xi}) & \rTo & i
\mathfrak{su}(W_+) \\
\qquad \qquad \qquad \phi & \rMapsto & (\rho_{\xi} \circ (\Lambda^2 
\phi^*)^{-1})( F^{+, \phi^*g_{\xi}}_A ) -[\psi^* \tens \psi]_0
\end{diagram}at the identity.
The map $\phi$ is an orientation-preserving isometry between $(TM, \phi^*g)$ and 
$(TM,g)$, hence the Hodge star for the metric $\phi^* g_{\xi}$ 
can be expressed as:
$ *_{\phi^* g_{\xi}} = \Lambda^2 \phi^* \circ *_{g_{\xi}} \circ 
(\Lambda^2 \phi^*)^{-1}$.  
Consequently: 
$$ F^{+, \phi^* g_{\xi}}_A = \left( \frac{\Lambda^2 \phi^* \circ *_{g_{\xi}} \circ 
(\Lambda^2 \phi^*)^{-1} +1}{2}\right)F_A = \Lambda^2 \phi^* 
\left( \frac{*_{g_{\xi}}  
 +1}{2}\right)(\Lambda^2 \phi^*)^{-1} F_A \;.$$
Therefore, denoting $P^{+,g_{\xi}}$ the projection onto 
self-dual $2$-forms for the 
metric $g_{\xi}$, we get: 
$$ (\rho_{\xi} \circ (\Lambda^2 
\phi^*)^{-1})( F^{+,\phi^* g_{\xi}}_A ) = \rho_{\xi} (P^{+,g_{\xi}} \circ 
(\Lambda^2 \phi^*)^{-1} F_A) \;.$$
Given a path of metrics in the direction $s \in \sym(TM, g_{\xi})$, $g_t = \phi^*_t g_{\xi}$, $\phi_t = 1 +ts$, and differentiating in $t=0$ we get: 
$$ \de{\mbb{F}_2}{\phi} (A, \psi, \id)(s)= - \rho_{\xi}(P^{+, g_{\xi}} i(s^*) F_A \; ) \;,$$
where $i(s^*)$ is the derivation of degree $0$ on 
$\Lambda^* T^*M$ that coincides with $s^*$ on $T^*M$. In order to better 
understand the term $P^{+, g_{\xi}} i(s^*) F_A$ consider the splitting of symmetric endomorphisms:
$$ \sym(TM, g_{\xi}) \simeq \sym_0(TM,g_{\xi}) \oplus C^{\infty}(M, 
\mbb{R}) \cdot \id_{TM}$$ 
in traceless ones and scalar ones. 
It is now well known that the bundle $\sym(TM, g_{\xi})$ embeds in $\End(\Lambda^2T^*M)$ via the morphism $s \rMapsto i(s^*)$. According to the decomposition $\Lambda^2T^*M \simeq \Lambda^2_+ T^*M \oplus 
\Lambda^2_- T^*M$ and indicating with $s_0$ the traceless part of $s$, we can express $i(s^*)$ as: $$ i(s^*) = \matrice{cc}{\tr s & P^{+, g_{\xi}} i(s_0^*) \trest_{\Lambda^2_- T^*M} \\
P^{-, g_{\xi}} i(s_0^*) \trest_{\Lambda^2_+ T^*M} & \tr s } \;. $$Hence
 there remain induced
isomorphisms 
\begin{equation}\label{eq:delta}
\delta_{\pm}: \sym_0(TM, g_{\xi}) \rTo \Hom(\Lambda ^2_{\pm}T^*M, \Lambda^2_{\mp}T^*M)
\end{equation}and an isomorphism between 
scalar endomorphisms of $T^*M$ and  homoteties of $\Lambda^2T^*M$.
Therefore 
$ P^{+, g_{\xi}} i(s^*) F_A = (\tr s )F_A^+ + \delta_-(s_0)F_A^- $ and
$$  \de{\mbb{F}_2}{\phi} (A, \psi, \id)(s)= -(\tr s)\rho_{\xi}(F_A^+) - \rho_{\xi}(\delta_-(s_0)F_A^-) \; $$

The differential of the functional $\mbb{F}_{\Met(M)}$ with respect to the connection and the 
spinors components is well known (see \cite{MorganSWEAFMT}): we have, for variations 
$\tau \in iA^1(M)$, $\varphi \in \Gamma(W_+)$:
$$ \de{\mbb{F}_{\Met(M)}}{(A, \psi)}(A, \psi, \xi)(\tau, \varphi)= 
\left( 
\begin{array}{c}
{\displaystyle \frac{1}{2}} \rho_{\xi}(\tau) \psi + D_A^{\xi } \varphi \\
d^{+}\tau -[\varphi^* \tens \psi + \psi^* \tens \varphi]_0 \end{array}
\right) $$
To compute the full differential of $\mbb{F}_{\Met(M)}$ it remains to compute its
 variation along vertical direction of $\Xi$, that is, along the fibers. 
Since the group $\Aut(Q_{Spin^c(4)})$ acts on the configuration space $\mc{C} \times 
\Xi$ preserving the solutions and since its action on $\Xi$ is transitive 
on the 
connected component of the fibers, 
 no contribution to the 
transversality can be obtained in this way, because the component of the 
differential we get is  a linear 
combination of the other components. 
\section{The question of transversality}
In this section we set up the transversality problem. Our final project is to 
prove that the universal Seiberg-Witten moduli space $\mc{M}_{\Met(M)}$  is  
smooth, at least at its irreducible points. This can be achieved with standard 
methods via the implicit function theorem applied to adequate
Banach manifolds, once we know that the defining equations of $\mc{M}_{\Met(M)}$ inside 
$(\mc{C} \times \Xi)/\mc{G}$ are transversal at irreducible monopoles. 
In order to proceed in such a way we need, as usual, 
to complete our till now Fr\'echet manifolds to Banach~ones.
\subsection{Sobolev completions}\label{sbvc}
Let $\mc{C}^2_p = \Gamma(W_+)^2_{p} \times \mc{A}_{U(1)}(L)^2_p$ 
and $\mc{D}^2_{p-1} = 
i \mathfrak{su}(W_+)^{2}_{p-1} \times \Gamma(W_ -)^2_{p-1}$ be the completions of $\mc{C}$ and $\mc{D}$ in Sobolev norms $|| \; \; ||_{2,p}$ and $|| \; \; ||_{2,p-1}$, respectively, so that they become a Hilbert affine space and a Hilbert vector space, respectively. Consider also the Banach completions 
$\mc{C}^l$ and $\mc{D}^l$  of 
$\mc{C}$ and $\mc{D}$, respectively, in norm $C^l$, $l \in \mbb{N}$.
The space of metrics $\Met(M)$ can be completed to a Banach manifold
considering the space of $C^r$-metrics $\Met(M)^r$. We suppose that the Fr\'echet submanifold $T \subseteq \Met(M)$ we are considering admits a completion\footnote{This is always the case if $T$ is a space of global sections of a fiber subbundle of $Met(M)$} to a Banach splitting
submanifold $T^r$ of $\Met^r(M)$.
Complete now $\Xi$ with $\mu$-equivariant morphisms of class~$C^r$:  
$ \Xi^r := \Mor_{\mu}^r(Q_{Spin^c(4)}, P_{GL_+(4)}) \; ;$
the space $\Xi^r$ becomes then a Banach principal 
bundle with structural group $\Aut^r(Q_{SO(n)})$ (the $C^r$-gauge group of 
$Q_{SO(n)}$) over the space of $C^r$-metrics $\Met^r(M)$; the natural
connection defined in subsection \ref{subsect:natconnection} extends
to this setting.
Now take $\Xi^r_T$ to be the $\Aut^r(Q_{SO(n)})$-Banach principal bundle on $T^r$ given by the restriction of $\Xi^r$ to $T^r$. 
 We will always suppose $r>>p>>0$. We will complete as well the 
gauge group $\mc{G}$ to $\mc{G}^2_{p+1}$, in Sobolev norm $|| \; \; ||_{2,p+1}$ 
in order to have a Banach-Lie group acting continuously on 
$\mc{C}^2_p$ and $\mc{D}^2_{p-1}$.
Denote with $\mc{C}^{2,*}_p$ the irreducible unparametrized configurations, 
that is, couples $(A, \psi)$ in $\mc{C}^2_p$ such that $\psi \neq 0$: such 
couples have trivial $\mc{G}^{2}_{p+1}$-stabilizer;
denote moreover with $\mc{B}^{2}_p$ the quotient $\mc{B}^{2}_p := \mc{C}^2_p / 
\mc{G}^2_{p+1}$ and with $\mc{B}^{2,*}_p := \mc{C}^{2,*}_p / \mc{G}^2_{p+1}$; 
the latter is a Hilbert manifold.
Let now $(\mf{C}_T)^{2,r}_p := \mc{C}^2_p \times \Xi_T^r$ be the space of parametrized configurations, completed in Sobolev and $C^r$ norm, and $(\mf{C}_T^*)^{2,r}_p = \mc{C}^{2,*}_p \times \Xi_T^r
$ the irreducible ones.
The quotient 
$(\mathfrak{B}_T^*)^{2,r}_p:= (\mf{C}^*_T)^{2,r}_{p} / \mc{G}^2_{p+1}$ is 
isomorphic to
$\mc{B}^{2,*}_p \times \Xi^r_T$ and hence a Banach manifold.
 The functional $\mbb{F}_T$ extends to a $\mc{G}^2_{p+1}$-equivariant map: 
$$ (\mbb{F}_T)^{2,r}_{p}:  (\mf{C}_T)^{2,r}_p  \rTo \mc{D}^2_{p-1} \;.$$
We indicate with $(\mc{M}_T)^{2,r}_p := 
Z( (\mbb{F}_T)^{2,r}_p) / \mc{G}_{p+1}$ the parametrized Seiberg-Witten moduli space and with
 $(\mc{M}_T^*)^{2,r}_p = (\mc{M}_T)^{2,r}_p \cap (\mathfrak{B}^*_T)^{2,r}_p$ the parametrized moduli space of irreducible monopoles.
\begin{remark}\label{rmk:geq}
Let $(A, \psi, \xi) \in Z((\mbb{F}_T)^{2,r}_p)$ be a solution to the
parametrized Seiberg-Witten equations. Then $(A, \psi, \xi)$ is
$\mc{G}^2_{p+1}$-equivalent to a solution $(A^{\prime}, \psi^{\prime},
\xi)$, 
with $(A^{\prime}, \psi^{\prime}) \in 
\mc{C}^{r-3}=\mc{A}_{U(1)}(L)^{r-3} \times \Gamma(W_+)^{r-3}$ (of
class $C^{r-3}$)\footnote{This 
can be proved with
a slight modification (with $C^r$ regularity) of the proof given in 
\cite[Proposition 6.19]{TelemancourseDEA}. The proof is based on two statements. The first that $(A, \psi)$, 
as a solution of unparametrized Seiberg-Witten equations with fixed 
$Spin^c$-structure $\xi$, $(A, \psi)$ is $\mc{G}^2_{p+1}$-equivalent to $(A^{\prime \prime}, \psi^{\prime \prime})$ with $A^{\prime \prime}$ in $A_0$-gauge, for a fixed 
$C^{\infty}$-connection $A_0$, 
that is $d^*(A^{\prime \prime}-A_0)=0$. This proof \cite[Proposition
6.10]{TelemancourseDEA} is still valid with a $C^r$-metric. 
Then, that the
Seiberg-Witten equations (for fixed $\xi$), restricted to 
solutions in $A_0$-gauge, 
become the following nonlinear elliptic equations in $\tau= A^{\prime \prime} -A_0$ and $\psi$: 
\begin{gather}
D_{A_0} \psi = -\frac{1}{2} \rho_{\xi}(\tau) \psi \\
\left( \begin{array}{c} d^+ \\ d^*
\end{array} \right) \tau = \left( \begin{array}{c} [\psi^* \tens \psi]_0 -F_{A_0}^+ \\
 0
\end{array} \right) 
\end{gather}
By a bootstrapping argument based on 
Sobolev multiplication and elliptic regularity (see \cite[Appendix K,
Theorem 40]{BesseEM})
(the left hand term is a first order elliptic differential operator with 
$C^{r-1}$-coefficients) we get that the solution 
$(A^{\prime \prime}, \psi)$ is in $\mc{C}^2_{r}$ and hence in $\mc{C}^{r-3}=\mc{A}_{U(1)}(L)^{r-3} \times \Gamma(W_{+})^{r-3}$.}.
Consequently, 
if $\xi$ has $C^{\infty}$-regularity, up to the $\mc{G}^2_{p+1}$-action, we can always 
suppose that $(A, \psi)$ have $C^{\infty}$-regularity.
\end{remark}

Consider now the trivial Banach vector bundle: 
$ \mc{E}_T : (\mathfrak{C}^*_T)^{2,r}_{p} \times \mc{D}^2_{p-1}$ over the irreducible parametrized configurations $(\mathfrak{C}^*_T)^{2,r}_{p}$
: it is naturally equipped with a $\Aut^r(Q_{Spin^c})$-action. Denote with 
$\Gamma^r$ the image $\Gamma^r := \pim (\Aut^r(Q_{Spin^c}) \rTo \Aut^r(Q_{SO(4)}
))$. 
The restriction
of the functional $(\mbb{F}_T)^{2,r}_{p}$ 
to $(\mathfrak{C}^*_T)^{2,r}_{p}$
descends to the $\mc{G}^2_{p+1}$-quotients 
to give a $\Gamma^r$-equivariant section $\Psi_T$ of the $\Gamma^r$-equivariant quotient bundle 
$ \mathfrak{E}_T:= \mc{E}_T/ \mc{G}^2_{p+1}$:
$$ \Psi_T: (\mathfrak{B}^*_T)^{2,r}_{p}
 \rTo \mathfrak{E}_T\;.$$The section $\Psi_T$ is a Fredholm map between Banach manifolds;
its zero set $Z(\Psi_T)$ is exactly the parametrized moduli space
$(\mc{M}^*_T)^{2,r}_p$ of irreducible monopoles. 

\subsection{Remarks on the transversality statement}
A metric $g \in \Met(M)^r$ is said $c$-good (for the fundamental class
$c \in H^2(M,\mbb{Z})$) if the projection of $c$ onto the 
$g$-harmonic self-dual classes $\mc{H}^2_+$ is non zero. 
We denote with $\Met_{\textrm{$c$-good}}^r(M)$ the set 
of such metrics: 
it is an open set of 
$\Met^r(M)$, complementary of a closed of codimension $b_+(M)$. Let
now $b_+(M)>0$ and 
suppose that $\Met(M)^r_{c-\rm{good}} \cap T$ is a dense open set of $T$.
Let $\Xi^{r,**}_T$ the subset of $Spin^c$-structures in $\Xi^r_T$
projecting onto 
$c$-good metrics; 
set $\Xi^{r,**}:= \Xi^{r,**}_{\Met(M)}$. 
Let finally $(\mc{M}_{T}^{**})^{2,r}_p:=Z(\Psi_{T}) \cap (\mc{B}^2_p \times 
\Xi^{r,**}_T)$. In the following we will always denote with
$\Met(M)_{\textrm{c-good}}$, $\Xi^{**}$, $\Xi^{**}_T$, $\mc{M}^{*}_T$, 
$\mc{M}^{**}_T$ the corresponding spaces of objects of class $C^{\infty}$.
The projection $\pi_T \colon (\mc{M}_{T}^{**})^{2,r}_p \rTo \Xi^{r,**}_T$ is now a
surjective Fredholm map with \emph{compact fibers}, since no reducible monopole is allowed over a $c$-good metric. Denote with $(\mc{M}_{\xi})^{2,r}_p$ its fiber over $\xi \in \Xi^{r,**}_T$.
A satisfying result would be the following:
\begin{stat}\label{stat}The intrinsic differential $D_x\Psi_{\Met(M)}$ is surjective at all points
$x \in (\mc{M}^*_{\Met(M)})^{2,r}_p$. Consequently, the universal
moduli space $(\mc{M}^*_{\Met(M)})^{2,r}_p$ of irreducible Seiberg-Witten monopoles is a smooth Banach manifold.
\end{stat}
\begin{remark}\label{cinfty}
We discuss here the consequences of transversality. If statement~\ref{stat} is true, a standard application of Sard-Smale theorem \cite{Smale1965} implies that, 
\emph{for 
$\xi$ in a dense open set of $\Xi^{r, **}$, 
the Seiberg-Witten 
moduli space $(\mc{M}_{\xi})^{2,r}_p$ is smooth of the expected dimension. Consequently, 
this would be true for generic $\xi$ of class~$C^{\infty}$}. 
These arguments can be adapted in the following case, useful in the applications to k\"ahlerian monopoles. 
Let $S$ a 
Fr\'echet submanifold of $\Met(M)$, embedded in $T$, and let now
$\mc{M}_S$ be the parametrized Seiberg-Witten moduli space\footnote{As explained above, here we consider the moduli space of $C^{\infty}$-objects, introduced in section \ref{2.5}; by remark \ref{rmk:geq} it can be seen as embedded
 in $(\mc{M}_{\Met(M)})^{2,r}_p$, as being the inverse image of smooth elements
 $\Xi_S$ for the projection $\pi_{\Met(M)}$}
over $S$. Suppose that the intrinsic differential 
$D_x \Psi_T$ is surjective at all points of $\mc{M}_S^{**}$; in this case 
the parametrized moduli space $(\mc{M}_T)^{2,r}_p$and the universal
moduli space $(\mc{M}_{\Met(M)})^{2,r}_p$ are smooth at points in $\mc{M}_S^{**}$.
Fix now  a $Spin^c$-structure $\xi \in \Xi_S^{**}$ (of class $C^{\infty}$). By compactness of $(\mc{M}_{\xi})^{2,r}_p \simeq 
\mc{M}_{\xi}= \pi^{-1}_T(\xi)$, there exists an open neighbourhood $V_{\xi}$ 
of $\xi$ in 
$(\Xi_T^{**})^{2,r}_p$ such that $D_y \Psi_T$ is surjective for all $y \in 
\pi^{-1}_T(V_{\xi})$: hence
$\pi_T^{-1}(V_{\xi})$ is 
a smooth Banach manifold and the projection $\pi_T^{-1}(V_{\xi}) \rTo V_{\xi}$
is a Fredholm map with compact fibers. We can now apply to this map a 
standard argument using Sard-Smale theorem to obtain that for a generic 
$\xi^{\prime} \in V_{\xi}$ the fiber $(\mc{M}_{\xi^{\prime}})^{2,r}_p$ is smooth of the expected dimension; hence for a generic $\xi^{\prime} \in V_{\xi} \cap \Xi_T$ of class $C^{\infty}$ the
moduli space $\mc{M}_{\xi^{\prime}}$ is smooth of the expected dimension. The same can be said for an adequate neighbourhood $W_{\xi}$ of $\xi$ in~$\Xi^{r,**}$.
\end{remark}

We will now discuss the condition that the intrinsic differential $D_x\Psi_T$ 
is surjective at a point $x \in (\mc{M}_T^*)^{2,r}_p$. 
We recall that a local slice for the action of $\mc{G}^2_{p+1}$ in $\mc{C}^2_p$ 
at the point $x$ 
is a smooth splitting Hilbert submanifold $\mc{S}_x$ of a 
neighbourhood of $x$, invariant under the stabilizer $\Stab(x)$, 
such that the natural map: 
$$ \mc{S}_x \times _{\Stab(x)} \mc{G}^2_{p+1} \rTo \mc{C}^2_{p}$$is a diffeomorphism 
onto a neighbourhood of the orbit through $x$. Such a slice exists
\cite[Lemma 4.5.5]{MorganSWEAFMT} and is built, in a neighbourhood of a point $(A, \psi)$, 
as $$\mc{S}_{(A, \psi)}=\{ (A^{\prime}, \phi) \in \mc{C}^2_p \, | \,  (D_{(A,\psi)}
\Upsilon_{A, \psi})^*(A^{\prime}-A, \phi-\psi) =0 \; , || A^{\prime}-A
||^2_p < \epsilon\, , \, || \phi - \psi ||^2_p < \epsilon
\}$$for a sufficiently small $\epsilon$, where $\Upsilon_{A, \psi}:
\mc{G}^2_{p+1} \rTo \mc{C}^2_p$ is the given by group action at $(A,
\psi)$ and where $(D_{(A, \psi)} \Upsilon_{(A, \psi)})^*$ is the
formal adjoint of the differential $D_{(A, \psi)} \Upsilon_{(A, \psi)}$.
 As a consequence $T_{(A, \psi)} \mc{S}_{(A, \psi)} \simeq \ker  (D_{(A, \psi)} \Upsilon_{A, \psi})^*$. A slice for the 
action of $\mc{G}^2_{p+1}$ on 
$(\mf{C}^*_T)^{2,r}_p$
at a point 
$(A, \psi, \xi)$ is then given by $\mc{S}_{(A, \psi, \xi)}:= \mc{S}_{(A, \psi)}\times U_{\xi}$ where $U_{\xi}$ 
is a small neighbourhood of $\xi$ in $\Xi^r_T$. 
The slice $\mc{S}_{(A, \psi, \xi)}$ provides a local 
model for $(\mathfrak{B}^*_T)^{2,r}_p$ at an irreducible point 
$([A, \psi], \xi)$. 
\begin{pps}\label{ciana}
The section $\Psi_T$ is transversal to the zero section at the point 
$x= ([A, \psi], \xi) \in (\mc{M}_T^*)^{2,r}_p$ if and only if the functional 
$(\mbb{F}_T)^{2,r}_{p}$ is transversal to $0$ 
at the point $(A, \psi, \xi) \in (\mathfrak{C}^*_T)^{2,r}_p$. 
\end{pps}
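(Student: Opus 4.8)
The plan is to express both transversality conditions as the surjectivity of one and the same bounded linear operator, using the local slice $\mc{S}_{(A,\psi,\xi)} = \mc{S}_{(A,\psi)}\times U_\xi$ together with the $\mc{G}^2_{p+1}$-equivariance (\ref{equi}) of $(\mbb{F}_T)^{2,r}_p$. Since $x = ([A,\psi],\xi) \in Z(\Psi_T) = (\mc{M}^*_T)^{2,r}_p$, the triple $(A,\psi,\xi)$ is a solution, so $\mbb{F}_T(A,\psi,\xi) = 0$, the intrinsic differential $D_x\Psi_T$ is well defined, and --- the targets $\mc{D}^2_{p-1}$ and $(\mathfrak{E}_T)_x$ being vector spaces --- transversality of $(\mbb{F}_T)^{2,r}_p$ to $0$ at $(A,\psi,\xi)$ (resp.\ of $\Psi_T$ to the zero section at $x$) is equivalent to surjectivity of $D_{(A,\psi,\xi)}(\mbb{F}_T)^{2,r}_p$ (resp.\ of $D_x\Psi_T$). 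It thus suffices to compare these two differentials.

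First I would record the slice decomposition. By the slice theorem \cite[Lemma 4.5.5]{MorganSWEAFMT} one has the topological direct sum
$$ T_{(A,\psi)}\mc{C}^2_p = \pim\bigl(D_{(A,\psi)}\Upsilon_{A,\psi}\bigr) \oplus \ker\bigl(D_{(A,\psi)}\Upsilon_{A,\psi}\bigr)^* , \qquad T_{(A,\psi)}\mc{S}_{(A,\psi)} = \ker\bigl(D_{(A,\psi)}\Upsilon_{A,\psi}\bigr)^* , $$
while $\mc{G}^2_{p+1}$ acts trivially on $\Xi^r_T$ (so that $(\mathfrak{B}^*_T)^{2,r}_p \simeq \mc{B}^{2,*}_p\times\Xi^r_T$). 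Hence the slice chart models $(\mathfrak{B}^*_T)^{2,r}_p$ near $x$ on $\ker(D_{(A,\psi)}\Upsilon_{A,\psi})^*\times T_\xi\Xi^r_T$, the representative $(A,\psi,\xi)$ trivializes $\mathfrak{E}_T = \mc{E}_T/\mc{G}^2_{p+1}$ near $x$ and identifies $(\mathfrak{E}_T)_x$ with $\mc{D}^2_{p-1}$, and in this chart $\Psi_T$ is nothing but $(\mbb{F}_T)^{2,r}_p$ restricted to $\mc{S}_{(A,\psi,\xi)}$. Therefore
$$ D_x\Psi_T = D_{(A,\psi,\xi)}(\mbb{F}_T)^{2,r}_p \big|_{\ker(D_{(A,\psi)}\Upsilon_{A,\psi})^*\times T_\xi\Xi^r_T} . $$

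The key step is then that at the solution the differential of $(\mbb{F}_T)^{2,r}_p$ kills the gauge directions. Differentiating the equivariance identity (\ref{equi}) in $f\in\mc{G}^2_{p+1}$ at $f=e$, noting that $\mc{G}$ acts trivially on the $\Xi_T$-factor and that, since $\mbb{F}_T(A,\psi,\xi)=0$, the $\mc{G}^2_{p+1}$-orbit through $\mbb{F}_T(A,\psi,\xi)$ is the single point $0$ (so its orbit map has zero differential), one gets
$$ D_{(A,\psi,\xi)}(\mbb{F}_T)^{2,r}_p \circ D_{(A,\psi)}\Upsilon_{A,\psi} = 0 , $$
i.e.\ $D_{(A,\psi,\xi)}(\mbb{F}_T)^{2,r}_p$ annihilates $\pim(D_{(A,\psi)}\Upsilon_{A,\psi})\times\{0\}$. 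Combining this with the slice decomposition,
$$ \pim\, D_{(A,\psi,\xi)}(\mbb{F}_T)^{2,r}_p = \pim\Bigl(D_{(A,\psi,\xi)}(\mbb{F}_T)^{2,r}_p\big|_{\ker(D_{(A,\psi)}\Upsilon_{A,\psi})^*\times T_\xi\Xi^r_T}\Bigr) = \pim\, D_x\Psi_T , $$
so $D_x\Psi_T$ is onto if and only if $D_{(A,\psi,\xi)}(\mbb{F}_T)^{2,r}_p$ is, which is the proposition.

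The main thing to check carefully is the identification invoked in the second paragraph: that the trivialization of $\mathfrak{E}_T$ read off from the slice really represents $\Psi_T$ as $(\mbb{F}_T)^{2,r}_p$ restricted to $\mc{S}_{(A,\psi,\xi)}$, and that the resulting isomorphism $(\mathfrak{E}_T)_x\simeq\mc{D}^2_{p-1}$ is the canonical one at a zero of the section. This is a matter of unwinding the definitions of $\mc{E}_T/\mc{G}^2_{p+1}$ and of the descended section, using that on the irreducible locus $\Stab(A,\psi)$ is trivial, so that $\mc{S}_{(A,\psi,\xi)}$ maps diffeomorphically onto a neighbourhood of $x$; once this is granted, everything else is elementary Banach-space linear algebra, the only analytic input being the slice theorem, which is already available.
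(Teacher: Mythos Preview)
Your proof is correct and follows essentially the same route as the paper: identify $D_x\Psi_T$ with the restriction of $D_{(A,\psi,\xi)}(\mbb{F}_T)^{2,r}_p$ to the slice tangent space $\ker(D_{(A,\psi)}\Upsilon_{A,\psi})^*\times T_\xi\Xi^r_T$, then use that the gauge directions $\pim D_{(A,\psi)}\Upsilon_{A,\psi}$ lie in the kernel of $D_{(A,\psi,\xi)}(\mbb{F}_T)^{2,r}_p$ to conclude that the images coincide. You supply more detail than the paper does---in particular the derivation of the kernel inclusion from the equivariance (\ref{equi}) at a zero, and the verification that $\mc{G}^2_{p+1}$ acts trivially on $\Xi^r_T$---but the argument is the same.
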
\begin{proof}The section $\Psi_T$ can be written locally on $\mc{S}_{(A, \psi, \xi)}$ as: 
\begin{diagram}[height=0.5cm] 
\mc{S}_{(A, \psi)} \times U_{\xi} & \rTo & \mc{D}^2_{p-1} \\
((A^{\prime}, \psi^{\prime}), \xi^{\prime}) & \rMapsto & 
(\mbb{F}_T)^{2,r}_p(A^{\prime}, \psi^{\prime}, \xi^{\prime})
\end{diagram}
Since $T_{(A, \psi)} \mc{S}_{(A, \psi)}   \simeq \ker  (D_{(A, \psi)} \Upsilon_{A, \psi})^* = (\pim
 D_{(A, \psi)} \Upsilon_{A, \psi} )^{\perp}$, 
we have that $D _x\Psi_T$ coincides with the restriction $D_{(A, \psi, \xi)}
(\mbb{F}_T)^{2,r}_p \trest_{(\pim 
 D_{(A, \psi)} \Upsilon_{A, \psi} )^{\perp} \oplus T_{\xi} \Xi^r_T}
$. Now, since 
$\pim
 D_{(A, \psi)} \Upsilon_{A, \psi} \oplus T_{\xi} \Xi^r_T \subseteq \ker D_{(A, \psi, \xi)}
(\mbb{F}_T)^{2,r}_p $, we can conclude. \end{proof}
\begin{remark}\label{postequi}Since it is $\Gamma^r$-equivariant, the section $\Psi_T$ is 
transversal to zero in a point $x= ([A, \psi], \xi)$ if and only if it is 
transversal to zero in a point $xf= ([f^*A, \psi f], \xi f)$, for 
$f \in \Gamma^r$.
This means that the smoothness of a standard 
Seiberg-Witten moduli space $(\mc{M}_{\xi})^{2,r}_p$, $\xi \in \Xi^r_T$ does not depend 
on the particular $Spin^c$-structure $\xi$ (chosen in a fixed connected 
component of $\Xi^r_T$) but \emph{only} on the metric $g_{\xi}$ compatible with 
$\xi$. \end{remark}
\begin{remark}\label{postequi2}
Let $b_+(M)>0$. By remark \ref{postequi} and since 
the torsion subgroup $\mf{t}$, counting
connected components of $\Xi/\Gamma$, is finite, 
if statement \ref{stat} is true, then for a generic 
$C^{\infty}$ metric $g \in \Met(M)$, the standard 
Seiberg-Witten moduli $\mc{M}_{\xi}$ is smooth of the expected dimension for 
any $Spin^c$-structure $\xi$,  
compatible with $g$. 
This means that, even there is no satisfactory way to parametrize 
Seiberg-Witten equations and moduli spaces just with metrics $\Met(M)$,
because, by remark \ref{curvrmk}, there are no parallel sections $\Met(M) \rTo \Xi$,
 the transversality statement depends \emph{only} on the metric 
chosen and not on the particular $Spin^c$-structure compatible with the metric.
\end{remark}
\subsection{The adjoint operator}\label{obstransv}
In this subsection we express the obstruction to the transversality of the section $\Psi_T$ in terms of the formal 
adjoint of the differential of the functional $\mbb{F}_T$. Let 
$([A, \psi], \xi) \in Z(\Psi_T)$. By remark \ref{rmk:geq} we can suppose that 
$(A, \psi)$ are of class $C^{r-3}$. By proposition \ref{ciana} the obstruction to the transversality of $\Psi_T$ at the point $([A, \psi], \xi)$ is given by the cokernel of the first order differential operator with $C^{r-3}$ coefficients: 
$$D_{(A, \psi, \xi)}
(\mbb{F}_T)^{2,r}_p: T_{(A, \psi)} \mc{C}^2_p \oplus T_{\xi}\Xi_T^r
\rTo \mc{D}^2_{p-1} \;.$$The operator $D_{(A, \psi, \xi)}
(\mbb{F}_T)^{2,r}_p$ is the partial Sobolev completion of the first order differential operator (with $C^{r-3}$ coefficients), given by the differential 
of the Seiberg-Witten functional $\mbb{F}_T$, defined on parametrized configurations of class $C^{r-3}$ and $C^r$:
$$ D_{(A, \psi, \xi)}
\mbb{F}_T \colon T_{(A, \psi)} \mc{C}^{r-3} \oplus T_{\xi} \Xi_T^{r} \rTo 
\mc{D}^{r-4} \;.$$
We indicate with $D_{(A, \psi)}F^{\xi}$ and with $P$ the first and 
second component,
respectively. 
The component 
$ D_{(A, \psi)}F^{\xi}$ 
is the differential of the unparametrized Seiberg-Witten functional $F^{\xi}$, relative to the $Spin^c$-structure $\xi$: it is well known \cite{MorganSWEAFMT} that it is underdetermined elliptic, in this case with $C^{r-3}$ coefficients; hence its Sobolev extension 
$(D_{(A, \psi)} F^{\xi})^2_p : T_{(A, \psi)} \mc{C}^2_p \rTo \mc{D}^2_{p-1}$ has closed
image of finite codimension. 
Since the operator $D_{(A, \psi, \xi)}
(\mbb{F}_T)^{2,r}_p$ can be written as the sum
$ D_{(A, \psi, \xi)}
(\mbb{F}_T)^{2,r}_p = (D_{(A, \psi)} F^{\xi})^2_p +P$, 
 it follows\footnote{This follows from the following fact \cite[Lemma 6.36]{TelemancourseDEA}. Let 
$E$ a Banach vector space with a continuous scalar product $<\cdot,\cdot> : E \times E \rTo \mbb{R}$ and let $F \subset E$ a closed subspace of finite codimension such that its orthogonal $F^{\perp}$ is a topological supplementary  of $F$ in $E$. If $F^{\prime}$ is a subspace of $E$ containing $F$, then $F^{\prime}$ is closed of finite codimensionand its orthogonal $(F^{\prime})^{\perp}$ is a topological supplementary of $F^{\prime}$ in $E$.} that 
$D_{(A, \psi, \xi)}
\mbb{F}_T$ is underdetermined elliptic and that $D_{(A, \psi, \xi)}
(\mbb{F}_T)^{2,r}_p$ has closed image of finite
codimension. By elliptic regularity, we  have firstly that $\coker D_{(A, \psi, \xi)}
(\mbb{F}_T)^{2,r}_p \subseteq \coker (D_{(A, \psi)} F^{\xi})^2_p \subseteq \mc{D}^{r-4}$; secondly that $\coker D_{(A, \psi, \xi)}
(\mbb{F}_T)^{2,r}_p$
can be identified with the $L^2$-orthogonal $(\pim  D_{(A, \psi, \xi)}
(\mbb{F}_T)^{2,r}_p)^{\perp}$; the latter coincides with the kernel 
of the formal adjoint of $D_{(A, \psi, \xi)}
\mbb{F}_T$: 
\begin{equation}\label{ecci} \coker D_{(A, \psi, \xi)}
(\mbb{F}_T)^{2,r}_p \simeq (\pim (D_{(A, \psi, \xi)}
(\mbb{F}_T)^{2,r}_p)^{\perp} \simeq \ker (D_{(A, \psi, \xi)}
\mbb{F}_T)^* \trest_{\mc{D}^{r-4}} \;.\end{equation}
\begin{remark}\label{transvrmk1}For $\xi \in \Xi^r$ 
consider  the section 
$\sigma_{\xi}\colon \Met(M)^r \rTo \Xi^r$ passing through $\xi$ and defining the horizontal 
distribution $H_{\xi}$ on $\Xi^r$. 
Suppose now that $g_{\xi} \in
T^r$. Let $\sigma_{T, \xi}$ be the restriction 
$\sigma_{T, \xi}:=\sigma_{\xi}\trest_{T^r}$ of this 
section to the submanifold $T^r$. We denote with 
$\tilde{\mbb{F}}_T$ the composition: 
$$  \tilde{\mbb{F}}_T := \mbb{F}_T \circ (\id_{\mc{C}^{r-3}} 
\times \sigma_{T, \xi}) : 
\mc{C}^{r-3} \times T^r \rInto \mc{C}^{r-3} \times \Xi^r_T \rTo \mc{D}^{r-4}\;.$$
Since, as seen in the end of section \ref{sec:vswe}, 
there is no contribution to the 
transversality
coming from variations of the equations 
along vertical directions in $T_{\xi}\Xi_T$, we have that 
\begin{equation}\label{BBB}
 \coker D_{(A, \psi, \xi)} (\mbb{F}_T)^{2,r}_p  \simeq  \ker 
 (D_{(A, \psi, \xi)}  \mbb{F}_T)^* \trest_{\mc{D}^{r-4}} \simeq  
\ker (D_{(A, \psi, g_{\xi})}  \tilde{\mbb{F}}_T)^* \trest_{\mc{D}^{r-4}} \;.
\end{equation}
\end{remark}
\begin{remark}\label{transvrmk2}
Since $\tilde{\mbb{F}}_T$ factorizes in the composition
$\mc{C}^{r-3} \times  T^{r} \rInto 
\mc{C}^{r-3} \times \Met(M)^r \rTo^{\tilde{\mbb{F}}_{\Met(M)}} \mc{D}^{r-4}$, we have: 
\begin{equation*}(D_{(A, \psi, g_{\xi})} \tilde{\mbb{F}}_T)^* = 
(\id_{T_{(A, \psi)}\mc{C}^{r-3}} \oplus P_{T_{g_{\xi}}T^r}) \circ 
(D_{(A, \psi, g_{\xi})} \tilde{\mbb{F}}_{\Met(M)})^* \;,
\end{equation*}where  $P_{T_{g_{\xi}}T^r}$ denotes the orthogonal projection 
$T_{g_{\xi}}\Met(M)^r \rTo T_{g_{\xi}}T^r$. \end{remark}In what follows we will
denote more briefly by $\tilde{\mbb{F}}$ the functional
$\tilde{\mbb{F}}_{\Met(M)}$. It is clear that if $\xi$ is of class
$C^{\infty}$, we can drop the superscripts, considering spaces of
objects of class  $C^{\infty}$.
\paragraph{Computation of the adjoint operator.}
In the sequel we will always assume for simplicity's sake that the point $x=(A, \psi, \xi)$,  where the differential is computed, is such that 
$A, \psi, \xi$ are of class\footnote{This will be 
always the case in the applications; however what we will say holds in
all generality.} $C^{\infty}$. In this subsection we will compute the
formal adjoint of the differential of $\tilde{\mbb{F}}$ at the point
$(A, \psi, g_{\xi})$ 
\begin{equation*}\label{pardiff}
D_{(A, \psi, g_{\xi})} \tilde{\mbb{F}} : iA^1(M) \times \Gamma(W_+) \times \sym(TM,g_{\xi}) \rTo 
\Gamma(W_-) \times i A^2_+(M) \;, 
\end{equation*}given by:
$$ 
D_{(A, \psi, g_{\xi})} \tilde{\mbb{F}}(\tau, \varphi,  s) = \matrice{c}{\displaystyle 
\frac{1}{2} \rho_{\xi}(\tau) \psi + D_A^{\xi } \varphi - \rho_{\xi} \circ s^* \circ \nabla^{W, \xi}_A \psi - \frac{1}{2}\rho_{\xi}(\di s -d \tr s)\\
d^{+}\tau -[\varphi^* \tens \psi + \psi^* \tens \varphi]_0 - (\tr s)F_A^+ - \delta_{-}(s_0)F_A^- 
} \; .
$$The computation of the formal adjoint 
is mostly straightforward: we will just remark the less trivial steps and make clear some notations. 
\paragraph{\it $L^2$-norms.} We recall here the $L^2$-norms 
with respect to which we are going to compute its formal adjoint; we will always indicate with
 $(\cdot,\cdot)$ the real inner products and with $\langle \cdot,\cdot
 \rangle$ the hermitian ones. On the bundle $T^*M \tens i \mbb{R}$ the norm is the standard one
induced by the metric $g_{\xi}$.  On $T^*M^{\tens m}$ the
metric $g_{\xi}$ induces the inner product $( x_1 \tens \dots \tens x_m , y_1
\tens \dots \tens y_m ) = m!\prod_{i=0}^m (x_i, y_i)$; 
the decomposition $T^*M \tens T^*M \simeq S^2T^*M
\oplus \Lambda^2T^*M $ is then an orthogonal direct sum. 
We will take on $S^2T^*M$ and on $\Lambda ^2T^*M$ the metrics induced by
the metric on $T^*M \tens T^*M$.
In this way
$|| e_i \tens e_j ||^2 =2$,
$|| e_i e_j ||^2=1= ||e_i \w e_j||^2$, if $i \neq j$, otherwise $|| e_i \tens e_i||^2 = || e_i^2 ||^2 =2$. The metric induced by
$T^*M \tens T^*M$ on $\sym(TM,g_{\xi})$ is $(s,t) = 2 \tr(s t)$. 
The morphisms $\delta_{\pm}$ defined in (\ref{eq:delta})
are isometries if we take on $\Hom(\Lambda^2_{\pm}T^*M, \Lambda^2_{\mp}T^*M)$ the
metric $(u,v)= 1/2 \tr (uv^*) $. 
On $\Gamma(W_+)$ and on $\Gamma(W_-)$
we take the real part of the hermitian metric and on $\Hom(W_+,W_-)$
 the hermitian scalar product $\langle u,v \rangle=1/2 \;  \tr(uv^*)$, so that the
Clifford multiplication $\rho_{\xi}$ is  an isometry. Finally the real part
of the hermitian
metric on $\End(W)$, given by $\langle A,B\rangle = 1/4 \;  \tr (A
 B^*)$, induces an orthogonal direct sum $\mathfrak{u}(W) \simeq i\mbb{R}
 \oplus \mathfrak{su}(W)$. We put on $i\mathfrak{su}(W)$ the
 real inner product induced by the real inner product just defined on
 $\End(W)$, so that the isomorphism $\rho_{\xi} : \Lambda^2T^*M \rTo i
 \mathfrak{su}(W)$ is an isometry. 
The isomorphism $\rho_{\xi} : TM \tens \mbb{C} \rTo \Hom(W_+,W_-)$ allows us to identify elements in $\Hom(W_+, W_-)$ with complexified tangent vectors and to define a complex conjugation (and hence a real and imaginary part) for elements in $\Hom(W_+,W_-)$.
\paragraph{\it Computation of the adjoint operator.}
We express the adjoint operator 
$(D_{(A, \psi, g_{\xi})} \tilde{\mbb{F}} )^*$ in terms of variables 
$(\chi, \theta) \in \Gamma(W_-) \times iA^2_+(M)$. In the rest of the article 
we will identify symmetric $2$-tensors  $S^2T^*M$ with symmetric endomorphisms $\sym(TM, g_{\xi})$ by means of the metric $g_{\xi}$.
\begin{enumerate}\item
To compute the adjoint of the map 
$j_{\psi} \colon A^1(M,\mbb{C}) \rTo 
\Gamma(W_-)$, given by $\sigma \rMapsto \rho_{\xi}(\sigma) \psi$, 
remark that, for $\chi \in \Gamma(W_-)$, we have:
\begin{equation}\label{adj}
\langle \rho_{\xi}(\sigma)\psi, \chi\rangle  =  \tr [\rho_{\xi}(\sigma) \circ (\psi^* \tens \chi )^*] = 2 \langle \rho_{\xi}(\sigma ), \psi^* \tens \chi 
\rangle_{\Hom(W_+,W_-)} 
=  2 \langle \sigma , \psi^* \tens \chi 
\rangle_{T^*M \tens \mbb{C}} \;.
\end{equation}Therefore the \emph{hermitian adjoint} of $j_{\psi}$ is the map 
$\chi \rMapsto 2 \: \psi^* \tens \chi $. 

\item The adjoint of the map $q_{\psi}(\varphi)=[\varphi^* \tens \psi + \psi^* \tens \varphi]_0$ is the operator $q^*_{\psi }:iA^2_+(M) \rTo \Gamma(W_+)$ given by: 
$q^*_{\psi }(\theta) = 1/2 \rho_{\xi}(\theta)\psi$. 
This can be proved firstly showing that, with the taken norms:
\begin{equation}\label{ide}\langle\rho_{\xi}(\theta), [\phi^* \tens \phi]_0 \rangle = \frac{1}{4}
\langle \rho_{\xi}(\theta) \phi, \phi \rangle \qquad \forall \; \theta \in iA^2_+(M) \quad \forall \phi \in  \Gamma(W_+)  \end{equation}and, secondly, differentiating the identity 
(\ref{ide}) with respect to $\phi$ and identifying $iA^2_{+}(M)$
 with $i \mathfrak{su}(W_{+})$ via the isometry $\rho_{\xi}$.
\item Recalling that $\delta_-$ is an isometry from $\sym_0(TM, g_{\xi})$ to $\Hom(\Lambda^2_-T^*M, \Lambda^2_+T^*M)$ with the given norms, we immediately get that the adjoint of the map 
$s \rMapsto \delta_-(s_0)(F_A^-)$ is given by $\theta \rMapsto 2 (F_A^-)^* \tens \theta$. 
\item To compute the adjoint  of the map $s \rMapsto \rho_{\xi}(\di s)\psi = j_{\psi} \circ \di (s)$  recall that the adjoint of the divergence operator 
$\di : \sym(TM, g_{\xi}) \rTo A^1(M)$ is given by the map: 
$  \sigma \rMapsto - (1/2) \;L_{\sigma^{\sharp}}g_{\xi}$, where we 
indicate with $\sigma^{\sharp}$ the vector field obtained from the 1-form 
$\sigma$ by raising the indexes.
The adjoint of $\rho_{\xi}(\di (-))\psi$ is then: 
$ \chi \rMapsto - L_{\pr (\psi^* \tens \chi)}g_{\xi} $. 
\item With similar arguments one can 
prove that the adjoint of $s \rMapsto \rho_{\xi}(d \tr s)\psi$ is given by 
$ \chi \rMapsto  d^*(\pr(\psi^* \tens \chi)) g_{\xi}$. \item Denote with 
$\nabla^{W}_A \psi^*$ the linear map: $TM \rTo W_+^*$ defined by: $X \rMapsto 
\langle -, \nabla^{W,\xi}_{A,X} \psi \rangle$ and with $\pr(\nabla^{W}_{A} \psi^* \tens \chi)$ the 
$2$-tensor defined by: $(X,Y) \rMapsto \langle Y, \pr(\nabla^{W, \xi}_{A,X} \psi^* \tens \chi)\rangle$. The adjoint of the map: $\sym(TM, g_{\xi}) \rTo \Gamma(W_-)$, 
defined by $s \rMapsto \rho \circ s^* \circ \nabla^{W, \xi}_A \psi$, is the map: 
$ \chi \rMapsto \sym \pr(\nabla^{W}_A \psi^* \tens \chi)$. This can be proved 
expressing everything in a local orthonormal frame $e^i$ and recalling the identity~(\ref{adj}). 
\end{enumerate}
We are ready to write down the formal adjoint of the operator 
$D_{(A, \psi, g_{\xi})}\tilde{\mbb{F}}$:
\begin{pps}The formal adjoint 
 of the operator  $D_{(A, \psi, g_{\xi})}\tilde{\mbb{F}}$ is the differential operator: $$ (D_{(A, \psi, g_{\xi})}\tilde{\mbb{F}})^*: \Gamma(W_- ) 
\oplus A^2_+(M, i\mbb{R}) \rTo A^1(M,i\mbb{R}) \oplus \Gamma(W_+) \oplus \sym(TM,g_{\xi})$$ given by: \small
$$ (D_{(A, \psi, g_{\xi})}\tilde{\mbb{F}})^*(\chi, \theta) = \matrice{c}{
d^*\theta +i \: \pim (\psi^* \tens \chi)  \\ 
 \\
D_A^{\xi} \chi -\dfrac{1}{2} \rho_{\xi}(\theta)\psi \\
 \\
-\sym \pr (\nabla^{W}_A \psi^* \tens \chi) +\dfrac{1}{2} L_{\pr (\psi^* \tens \chi)} g_{\xi}
+\dfrac{1}{2} d^* \pr (\psi^* \tens \chi)g_{\xi} -\dfrac{1}{2} (F_A^+, \theta) g_{\xi} -2 (F_A^-)^* \tens \theta  }
$$\normalsize
\end{pps}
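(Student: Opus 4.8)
The plan is to exploit the fact that $D_{(A, \psi, g_{\xi})}\tilde{\mbb{F}}$ is, by the formula recalled just above, an array of elementary operators, each sending one of the three summands of the source $iA^1(M)\oplus\Gamma(W_+)\oplus\sym(TM,g_{\xi})$ into one of the two summands of the target $\Gamma(W_-)\oplus iA^2_+(M)$. Since these summands are sections of pairwise distinct subbundles, the corresponding $L^2$-splittings are orthogonal, so the adjoint is obtained by adjoining each elementary operator separately and reassembling the transposed block array. I would thus simply collate, one block at a time, the six adjoints prepared in items $(1)$--$(6)$ above, together with three standard facts: that $D_A^{\xi}$ is formally self-adjoint for the $L^2$-product which is the real part of the hermitian one on $\Gamma(W_{\pm})$ (the one for which $\nabla^{W,\xi}_A$ is metric and $\rho_{\xi}$ is skew); that the adjoint of $d^+\colon iA^1(M)\to iA^2_+(M)$ is $d^*$ restricted to self-dual forms; and that the isometry between scalar endomorphisms of $T^*M$ and homotheties of $\Lambda^2T^*M$ --- together with the normalisations $(s,t)=2\tr(st)$ on $\sym(TM,g_{\xi})$ and $\rho_{\xi}$ an isometry onto $i\mf{su}(W_+)$ --- turns $s\mapsto(\tr s)F_A^+$ into $\theta\mapsto\tfrac12(F_A^+,\theta)\,g_{\xi}$.

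Concretely: the block landing in $iA^1(M)$ picks up $d^*\theta$ from the transpose of $\tau\mapsto d^+\tau$, and $i\,\pim(\psi^*\tens\chi)$ from the transpose of $\tau\mapsto\tfrac12\rho_{\xi}(\tau)\psi$ --- here one uses identity (\ref{adj}) of item $(1)$, balancing the outer factor $\tfrac12$ against the $2$ it produces, and the fact that $\tau$ ranges over the \emph{imaginary} $1$-forms, which is exactly what forces the projection $i\,\pim$. The block landing in $\Gamma(W_+)$ picks up $D_A^{\xi}\chi$ from self-adjointness of the Dirac operator and $-\tfrac12\rho_{\xi}(\theta)\psi$ from item $(2)$ applied to $-q_{\psi}$, i.e.\ from differentiating identity (\ref{ide}). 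The block landing in $\sym(TM,g_{\xi})$ collects four terms: $-\sym\pr(\nabla^{W}_A\psi^*\tens\chi)$ from item $(6)$ applied to $-\rho_{\xi}\circ s^*\circ\nabla^{W,\xi}_A\psi$; the pair $\tfrac12 L_{\pr(\psi^*\tens\chi)}g_{\xi}+\tfrac12 d^*(\pr(\psi^*\tens\chi))\,g_{\xi}$ from items $(4)$ and $(5)$ applied to $-\tfrac12\rho_{\xi}(\di s-d\tr s)\psi$ (the two sign reversals combining with the outer minus sign); $-\tfrac12(F_A^+,\theta)\,g_{\xi}$ from the scalar/homothety adjoint applied to $-(\tr s)F_A^+$; and $-2(F_A^-)^*\tens\theta$ from item $(3)$ applied to $-\delta_-(s_0)F_A^-$. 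Collating the blocks gives precisely the asserted expression for $(D_{(A, \psi, g_{\xi})}\tilde{\mbb{F}})^*(\chi,\theta)$.

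I expect no conceptual obstacle here: the substance is in the auxiliary computations $(1)$--$(6)$, and what remains is disciplined bookkeeping, which is where the care is needed. Two points deserve attention. First, one must stay consistent with the normalisations of the $L^2$-products fixed in the paragraph on $L^2$-norms --- notably the weight $(x_1\tens\cdots\tens x_m,y_1\tens\cdots\tens y_m)=m!\prod_i(x_i,y_i)$ on $T^*M^{\tens m}$ and the factors $\tfrac12$ built into the hermitian products on $\Hom(W_+,W_-)$ and $\End(W)$ --- so that all numerical coefficients emerge as stated. Second, one must track carefully the passage between hermitian and real pairings, since this is what replaces the complex-valued expressions $\psi^*\tens\chi$ and $\nabla^{W}_A\psi^*\tens\chi$ by their real or imaginary parts in the final formula. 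Among the imported ingredients, the least transparent is the adjoint of the divergence $\di\colon\sym(TM,g_{\xi})\to A^1(M)$, namely $\sigma\mapsto-\tfrac12 L_{\sigma^{\sharp}}g_{\xi}$ (item $(4)$); granting it, the Lie-derivative term in the third component is forced.
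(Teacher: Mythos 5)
Your proposal is correct and coincides with the paper's own treatment: the paper states this proposition without a separate proof precisely because its content is the block-by-block assembly of the adjoints computed in items $(1)$--$(6)$, together with the standard adjoints of $d^+$ and of the Dirac operator and the scalar/homothety normalisation, exactly as you describe. Your bookkeeping of signs and of the factor $\tfrac12$ in the $(F_A^+,\theta)\,g_{\xi}$ term (coming from the normalisation $(s,t)=2\tr(st)$) matches the stated formula.
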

\subsection{The obstruction to transversality}By (\ref{BBB}) we know that $\ker (D_{(A, \psi, \xi)}\mbb{F}_{\Met(M)})^* = \ker (D_{(A, \psi, g_{\xi})}\tilde{\mbb{F}})^*$; hence
the equations for the kernel of $(D_{(A, \psi, \xi)}
\mbb{F}_{\Met(M)})^*$ read: 
\begin{subequations}\label{ker}\begin{gather}\label{ker1}
d^* \theta +i \: \pim (\psi^* \tens \chi) =0 \\ \label{ker2}
D_A^{\xi} \chi -\frac{1}{2} \rho_{\xi}(\theta)\psi =0 \\ \label{ker3}
-\sym  \pr (\nabla^{W}_A \psi^* \tens \chi) +\frac{1}{2} L_{\pr (\psi^* \tens \chi)}g_{\xi} 
+\frac{1}{2} d^* \pr (\psi^* \tens \chi)g_{\xi} -\frac{1}{2} (F_A^+, \theta) g_{\xi} - 2(F_A^-)^* \tens \theta =0   
\end{gather}
\end{subequations}where $(A, \psi, \xi)$ satisfies $\mbb{F}_{\Met(M)}(A, \psi, \xi)=0$, with $\psi \neq 0$.
Equations (\ref{ker}) can be slightly simplified.
\begin{lemma}\label{icci} If $(\chi, \theta)$ is a solution of equations (\ref{ker}), then 
$ ( F_A^+, \theta)=0$ and $ \di (\psi^* \tens \chi) =0$. 
\end{lemma}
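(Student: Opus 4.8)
The plan is to extract from the kernel equations (\ref{ker1})--(\ref{ker3}), used together with the Seiberg--Witten equations $D_A^{\xi}\psi=0$ and $\rho_{\xi}(F_A^{+})=[\psi^{*}\tens\psi]_{0}$ satisfied by $(A,\psi,\xi)$, a small closed system of scalar identities relating the three functions $(F_A^{+},\theta)$, $d^{*}\pr(\psi^{*}\tens\chi)$ and $\langle\rho_{\xi}(\theta)\psi,\psi\rangle$, and then to observe that the normalisation constants force all three to vanish. To begin, applying $d^{*}$ to (\ref{ker1}) and using $d^{*}d^{*}=0$ gives $d^{*}\pim(\psi^{*}\tens\chi)=0$; since $\psi^{*}\tens\chi$ is a complexified $1$-form, this says that $\di(\psi^{*}\tens\chi)=\di\pr(\psi^{*}\tens\chi)=-d^{*}\pr(\psi^{*}\tens\chi)$ is real-valued.

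Next I would take the trace with respect to $g_{\xi}$ of the symmetric-tensor identity (\ref{ker3}). Four facts make this trace elementary: (i) $\sym$ does not change the trace, and $\tr\pr(\nabla^{W}_{A}\psi^{*}\tens\chi)$ is, by (\ref{adj}), a multiple of $\pr\langle D_A^{\xi}\psi,\chi\rangle$, which vanishes because $D_A^{\xi}\psi=0$; (ii) the term $(F_A^{-})^{*}\tens\theta$ is traceless, being the image of the adjoint of $s\mapsto\delta_{-}(s_{0})F_A^{-}$, a map landing in $\sym_{0}(TM,g_{\xi})$; (iii) $\tr(L_{\pr(\psi^{*}\tens\chi)}g_{\xi})=2\di\pr(\psi^{*}\tens\chi)=-2d^{*}\pr(\psi^{*}\tens\chi)$; (iv) $\tr g_{\xi}=4$. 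Collecting the surviving terms yields the scalar relation $d^{*}\pr(\psi^{*}\tens\chi)=2(F_A^{+},\theta)$.

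Then I would use the pointwise form of the formal self-adjointness of the Dirac operator between $\Gamma(W_{+})$ and $\Gamma(W_{-})$: for $\chi\in\Gamma(W_{-})$ and $\psi\in\Gamma(W_{+})$ one has $\langle D_A^{\xi}\chi,\psi\rangle-\langle\chi,D_A^{\xi}\psi\rangle=-2\di(\psi^{*}\tens\chi)$, the divergence term being identified from (\ref{adj}) by the computation $\sum_{i}\langle\rho_{\xi}(e_{i})\chi,\psi\rangle\,e_{i}=2\,\psi^{*}\tens\chi$ in a local orthonormal frame. Substituting $D_A^{\xi}\psi=0$ and (\ref{ker2}), i.e.\ $D_A^{\xi}\chi=\tfrac12\rho_{\xi}(\theta)\psi$, this becomes $\di(\psi^{*}\tens\chi)=-\tfrac14\langle\rho_{\xi}(\theta)\psi,\psi\rangle$, hence by the first paragraph $d^{*}\pr(\psi^{*}\tens\chi)=\tfrac14\langle\rho_{\xi}(\theta)\psi,\psi\rangle$. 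Independently, the second Seiberg--Witten equation $\rho_{\xi}(F_A^{+})=[\psi^{*}\tens\psi]_{0}$ together with identity (\ref{ide}), applied with $\phi=\psi$ and using that $\rho_{\xi}\colon\Lambda^{2}_{+}T^{*}M\to i\mathfrak{su}(W_{+})$ is an isometry, gives $(F_A^{+},\theta)=\tfrac14\langle\rho_{\xi}(\theta)\psi,\psi\rangle$ (up to a harmless sign/real-part depending on the hermitian or skew-hermitian character of $\rho_{\xi}(\theta)$). Comparing with $d^{*}\pr(\psi^{*}\tens\chi)=2(F_A^{+},\theta)$ forces $2(F_A^{+},\theta)=(F_A^{+},\theta)$, so $(F_A^{+},\theta)=0$; then $\langle\rho_{\xi}(\theta)\psi,\psi\rangle=0$, and finally $d^{*}\pr(\psi^{*}\tens\chi)=0$, so $\di(\psi^{*}\tens\chi)=0$.

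I expect the delicate point to be this third step: establishing the pointwise Green identity for $D_A^{\xi}$ with exactly the right constant and divergence term, and, more generally, checking that the normalisation constants fixed in \S\ref{obstransv} (the factors $\tfrac12$, $\tfrac14$ and $\tr g_{\xi}=4$) are such that the resulting linear system in the three scalars is non-degenerate; one must also keep the real and imaginary parts of the complex-valued object $\psi^{*}\tens\chi$ carefully separated throughout, and verify that $\di\psi^{*}\tens\chi$ in the statement indeed means the divergence of the complexified vector field associated with $\psi^{*}\tens\chi$ via $\rho_{\xi}$.
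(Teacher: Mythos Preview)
Your proposal is correct and follows essentially the same route as the paper: apply $d^*$ to (\ref{ker1}) to see that $\di(\psi^*\tens\chi)$ is real; take the $g_\xi$-trace of (\ref{ker3}) to obtain one linear relation between $\di(\psi^*\tens\chi)$ and $(F_A^+,\theta)$; pair (\ref{ker2}) with $\psi$ and use the pointwise divergence identity for the Dirac operator (the paper's formula (\ref{diracdiv})) together with (\ref{ide}) and the second Seiberg--Witten equation to obtain a second, independent linear relation; then solve. Your hedging about constants in the third step is unnecessary: with the paper's normalisations one gets exactly $\di(\psi^*\tens\chi)+2(F_A^+,\theta)=0$ from the trace and $\di(\psi^*\tens\chi)+(F_A^+,\theta)=0$ from the pairing, as you found.
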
\begin{proof}Consider the equations (\ref{ker}). 
Applying the operator $d^*$ to the first equation we get
$ d^* \pim (\psi^* \tens \chi) = -\di \pim (\psi^* \tens \chi)=0$; hence
$\di  (\psi^* \tens \chi) = \di  \pr (\psi^* \tens \chi)$. 
Recall  the following identity\footnote{One can easily prove the equality estabishing it first at the symbol level; then, showing pointwisely 
the equality at the zero-th order terms taking an adapted 
orthonormal frame, that is a local orthonormal frame $e^i$ such that 
$\nabla e^i (p)=0$ at the point $p$.}: if $\phi$ is a positive spinor, and $\zeta$ is a negative one, then 
\begin{equation}\label{diracdiv} 2 \, \di (\phi^* \tens \zeta) = \langle D_A  \phi, \zeta \rangle -
\langle \phi, D_A \zeta  \rangle \;.
\end{equation}
We now take the trace in the third equation, remembering that, for 
any vector field $X$, we have $\tr L_X g_{\xi} = 2
 \di X$. We get: 
$$ -\tr \sym \pr (\nabla^{W}_A \psi^* \tens \chi) +\frac{1}{2}\tr
L_{\pr (\psi^* \tens \chi)} g_{\xi} +2 \: d^* \pr (\psi^* \tens \chi) - 2(F_A^+, \theta) = 0 \;,$$or, equivalently, since $\di (\psi^* \tens \chi)$ is real,
\begin{equation}
\label{lollo}
     \di (\psi^* \tens \chi) + 2(F_A^+, \theta) =
     0 \;,\end{equation}since $\tr L_{\pr (\psi^* \tens \chi)} g_{\xi} =
   2 \di \pr (\psi^* \tens \chi)$, and, by a simple 
computation taking a orthonormal frame, 
$\tr \sym \pr (\nabla^{W}_A \psi^* \tens \chi) = 1/2 \, \pr \langle D_A \psi, \chi \rangle = 0$. 
Now, taking the scalar product with $\psi$ in the second equation we get: 
$ \langle \psi, D_A \chi \rangle -1/2 \:\langle \psi, \rho_{\xi}(\theta) \psi 
\rangle =0$, which becomes, using (\ref{ide}) and (\ref{diracdiv}): 
\begin{equation}\label{lollona}
   \di  (\psi^* \tens \chi) + ( F_A^+, \theta  
) =0 \;.\end{equation}Combining (\ref{lollo}) and (\ref{lollona}) we get the result. \end{proof}
\begin{pps}
The obstruction to the transversality of the universal Seiberg-Witten 
functional $(\mbb{F}_{\Met(M)})^{2,r}_p$ at the solution\footnote{Here we consider 
$A$, $\psi$, $\xi$ of class $C^{\infty}$} $(A, \psi, \xi)$ of the universal Seiberg-Witten equations is given by nontrivial solutions $(\theta, \chi) \in i A^2_+(M) \oplus \Gamma(W_-)$ to the following equations:
\begin{subequations} \label{ke}
\begin{gather} \label{ke1}
d^* \theta +i\: \pim (\psi^* \tens \chi) =0 \\ \label{ke2}
D_A^{\xi} \chi -\frac{1}{2} \rho_{\xi}(\theta)\psi =0 \\ \label{ke3}
-\sym \: \pr (\nabla^W_A \psi^* \tens \chi) 
+\frac{1}{2} L_{\pr (\psi^* \tens \chi)} g_{\xi} -2(F_A^-)^* \tens \theta =0  \\ \label{ke4}
(\theta, F_A^+)=0 \\ \label{ke5}
\di (\psi^* \tens \chi) = 0 
\end{gather}\end{subequations}
\end{pps}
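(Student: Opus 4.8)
The plan is to assemble the statement from three ingredients already in hand: the identification $(\ref{BBB})$ of the obstruction with the kernel of the formal adjoint, the explicit formula for $(D_{(A,\psi,g_{\xi})}\tilde{\mbb{F}})^{*}$ computed in the preceding Proposition, and Lemma $\ref{icci}$. First I would recall that, by Remark $\ref{transvrmk1}$ (taking $T=\Met(M)$, so that the projection $P_{T_{g_{\xi}}T^{r}}$ of Remark $\ref{transvrmk2}$ is the identity), the cokernel of $D_{(A,\psi,\xi)}(\mbb{F}_{\Met(M)})^{2,r}_{p}$ at the smooth monopole $(A,\psi,\xi)$ is isomorphic to $\ker (D_{(A,\psi,g_{\xi})}\tilde{\mbb{F}})^{*}$; since the coefficients of this operator are now $C^{\infty}$, elliptic regularity places its kernel inside $\Gamma(W_{-})\oplus iA^{2}_{+}(M)$, and spelling out the three components of the operator displayed in the preceding Proposition shows that a pair $(\chi,\theta)$ lies in this kernel exactly when it solves the system $(\ref{ker})$. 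Hence it remains only to verify that the systems $(\ref{ker})$ and $(\ref{ke})$ cut out the same set of pairs $(\chi,\theta)$, for $(A,\psi,\xi)$ a monopole with $\psi\neq 0$.

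For the implication $(\ref{ker})\Rightarrow(\ref{ke})$: equations $(\ref{ke1})$ and $(\ref{ke2})$ are literally $(\ref{ker1})$ and $(\ref{ker2})$, while Lemma $\ref{icci}$ supplies precisely $(\ref{ke4})$ and $(\ref{ke5})$. To pass from $(\ref{ker3})$ to $(\ref{ke3})$ I would observe that the two scalar-multiple-of-$g_{\xi}$ terms of $(\ref{ker3})$ drop out: the term $-\frac{1}{2}(F_A^{+},\theta)g_{\xi}$ vanishes by $(\ref{ke4})$; and applying $d^{*}$ to $(\ref{ker1})$ and using $d^{*}d^{*}=0$ gives $\di\,\pim(\psi^{*}\tens\chi)=0$, which together with $(\ref{ke5})$ forces $\di\,\pr(\psi^{*}\tens\chi)=0$, i.e.\ $d^{*}\pr(\psi^{*}\tens\chi)=0$, so the term $\frac{1}{2}d^{*}\pr(\psi^{*}\tens\chi)g_{\xi}$ also vanishes. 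Thus $(\ref{ker3})$ reduces to $(\ref{ke3})$.

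The converse $(\ref{ke})\Rightarrow(\ref{ker})$ runs the same computation backwards: $(\ref{ker1})$, $(\ref{ker2})$ are $(\ref{ke1})$, $(\ref{ke2})$, and starting from $(\ref{ke3})$ one simply adds back the two terms $\frac{1}{2}d^{*}\pr(\psi^{*}\tens\chi)g_{\xi}$ and $-\frac{1}{2}(F_A^{+},\theta)g_{\xi}$ — both zero, the first because $d^{*}$ of $(\ref{ke1})$ combined with $(\ref{ke5})$ again gives $d^{*}\pr(\psi^{*}\tens\chi)=0$, the second directly from $(\ref{ke4})$ — to recover $(\ref{ker3})$. So the two systems are equivalent; since by $(\ref{BBB})$ the solutions of $(\ref{ker})$ are exactly the obstruction classes, the nontrivial solutions of $(\ref{ke})$ are exactly the obstruction to transversality, which is the assertion.

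There is no genuinely hard step here: the argument is bookkeeping built on material already established. The only points that demand a little care are the sign convention relating $\di$ and $d^{*}$ (so that $d^{*}\alpha=-\di\,\alpha^{\sharp}$ on $1$-forms, as used throughout and in Lemma $\ref{icci}$), and the remark — also exploited in Lemma $\ref{icci}$ — that $\di\,\pim(\psi^{*}\tens\chi)$ vanishes automatically from the first kernel equation, so that controlling $\di(\psi^{*}\tens\chi)$ is the same as controlling $d^{*}\pr(\psi^{*}\tens\chi)$.
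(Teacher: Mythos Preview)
Your proof is correct and follows the same route as the paper: identify the obstruction with $\ker(D_{(A,\psi,g_{\xi})}\tilde{\mbb{F}})^{*}$ via (\ref{ecci})/(\ref{BBB}), and then use Lemma~\ref{icci} to pass from the kernel system (\ref{ker}) to the equivalent system (\ref{ke}). You simply spell out in full the equivalence that the paper compresses into the single phrase ``equivalent, by lemma~\ref{icci}'', including the (easy) converse direction; this is harmless extra bookkeeping and changes nothing substantive.
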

\begin{proof}By (\ref{ecci}) the cokernel of the differential 
$D_{(A, \psi, \xi)} (\mbb{F}_{\Met(M)})^{2,r}_p$ coincides with the kernel of the formal adjoint of the 
differential $D_{(A, \psi, \xi)}\mbb{F}_{\Met(M)}$  on 
sections of class $C^{\infty}$. The equations (\ref{ker}) of the kernel of $(D_{(A, \psi, \xi)}\mbb{F}_{\Met(M)})^*$ are now equivalent, by lemma \ref{icci}, to equations~(\ref{ke}). 
\end{proof}
\begin{remark}\label{EF}
We discuss now the gaps in the proof of the transversality with generic metrics by Eichhorn and Friedrich. The two authors (in
\cite[Proposition 6.4]{EichhornFriedrich1997} and Friderich alone in
\cite[page 141]{FriedrichDORG}) try to prove directly that the differential 
$D_{(A, \psi, g_{\xi})} \tilde{\mbb{F}}$ of the
perturbed Seiberg-Witten functional is surjective. A first source of 
unclearness is that they never give a precise expression of the
variation of the Dirac operator, which we have seen as being a
fundamental difficulty in the question; in particular no mention is
made about the term $-\rho_{\xi} \circ s^* \circ \nabla_A^{W} \psi$.
The authors take into account variations of the metric which are
orthogonal to the orbits of the action 
the diffeomorphism group ${\rm Diff}(M)$ on $\Met(M)$: this condition is
precisely expressed by
$\di s =0$. They now remark that the variation of the second
equation involves just the traceless part of the tensor $s_0$: as a
consequence, they now claim that they can deal with conformal perturbations
separately from volume preserving ones. Thanks to this uncorrect
argument, as we will see, they get to the two separate conditions, reading, 
our notations:
 \begin{gather*}
\langle \dd{}{g}(*_{g})(s_0) F_A , \theta \rangle=0 \;, \quad
 \langle \rho ( df) \psi, \chi \rangle =0 
\end{gather*}which are to be satisfied by an element $(\chi, \theta)$
in the cokernel of $D_{(A, \psi,g)} \tilde{\mbb{F}}$, for all 
 $s_0 \in \sym_0(TM,g)$ and for all   $f \in
C^{\infty}(M, \mbb{R})$ such that $ \di s_0 = df$. The result would
follow from them  (remark that they correspond\footnote{without
  taking into account the condition $\di s_0 =df$}, taking formal adjoints, 
 to two separate equations: $(F_A^{-})^* \tens \theta =0$; $d^* \pr
 (\psi^* \tens \chi)=0$).
This argument is not correct for the following two reasons. Firstly,
the term $-\rho \circ s^* \circ \nabla ^W_A \psi$ depends on the full
tensor $s$ and not just on his trace; secondly the variation of the
second equation does not involve just volume preserving perturbations,
since conformal perturbations come to play a role in the
identification $i \Lambda^2_+ T^*M \simeq i \mf{su}(W_+)$ via
$\rho_{\xi}$.
\end{remark}
\section{Transversality over K\"ahler monopoles}
\subsection{K\"ahler monopoles}We now consider the transversality problem 
on K\"ahler surfaces. Let $(M, J)$ be a compact connected $4$-manifold 
with an integrable complex structure $J$. We will indicate with $H_J(M)$
the space of hermitian metrics  with respect to the complex structure 
$J$; it is 
a splitting Fr\'echet 
submanifold of the manifold of riemannian metrics $\Met(M)$.
If $g \in  H_J(M)$, we indicate with $\omega_g:=g(-,J(-))$ the $(1,1)$ form associated to~$g$.
Suppose now that $(M,J)$ is of K\"ahler type. Let $K_J(M)$ be the set of K\"ahler
metrics for the complex structure $J$: 
$ K_J(M) := \{g \in H_J(M) \; | \;  \; d \omega_g=0 \}$.
A K\"ahler surface is by definition a $4$-manifold  
with  a $U(2)$-reduction of the structural group of the tangent bundle $P_{U(2)} \rInto P_{GL(4)}$ admitting a
torsion free $U(2)$-connection. The natural morphism: 
$i\colon U(2) \rInto SO(4) \times U(1)$ lifts to a morphism 
$j \colon U(2) \rInto Spin^c(4)$ so that $\nu \circ j =i$. The \emph{canonical 
$Spin^c$-structure} $\xi_0$ on a K\"ahler manifold $M$ is then given by the 
$\mu$-equivariant map: 
$$ \xi_0: Q_{Spin^c(4)}:= P_{U(2)} \times_{j} Spin^c(4) \rTo P_{GL_+(4)} $$induced 
by the morphism $j$. Remark that $Q_{SO(4)} \simeq P_{U(2)} \times_{U(2)} SO(4)$
and that $Q_{U(1)} \simeq P_{U(2)} \times_{\det} U(1)$. As a consequence the spinor 
bundle is: $W:= \Lambda^{0,*}T^*M$, with 
$W_{+} \simeq \Lambda^{0, \textrm{even}}T^*M $, $W_- \simeq \Lambda^{0, 1}T^*M$. The fundamental line bundle $L$ is isomorphic to the anticanonical bundle 
$\det W_+ \simeq K^*_M$ and the fundamental class $c$ is $c_1(M)$. 
The Clifford multiplication of the structure $\xi_0$ is given by 
\begin{diagram}[height=0.5cm]
\rho_{\xi_0} \colon T^*M & \rTo & \End(\Lambda^{0,\textrm{even}}T^*M,
\Lambda^{0,1}T^*M  ) \\ 
\qquad x & \rMapsto & \sqrt{2}[x^{0,1} \wedge (\cdot) - x^{0,1}\lrcorner (\cdot)]
\end{diagram}
Any other $Spin^c$-structure $\xi_N$ is obtained, up to isomorphism, from 
the canonical one by twisting the spinor bundle by a line bundle $N \in \Pic_{\rm top}(M)$; the resulting bundle of spinors is $W= \Lambda^{0,*}T^*M \tens N$, the determinant line 
bundle is twisted by of $N^{\tens 2}$, $L = 
K_M^* \tens N^{\tens 2}$, and the fundamental class changes as $c= c_1(M) + 2 c_1(N)$. The Clifford multiplication for $\xi_N$ is 
$\rho_{{\xi}_0} \tens \id_N$. 
In the notations of section \ref{2.5}, let $\Xi_{H_J(M)}$ 
be the \emph{hermitian} $Spin^c$-structures of class $c$ and fixed
type, that is, the 
$Spin^c$-structures of class $c$ in $\Xi$
projecting onto $J$-hermitian metrics, and let $\Xi_{K_J(M)}$ be
the \emph{k\"ahlerian} ones (those 
projecting onto  K\"ahler metrics). We will call the parametrized Seiberg-Witten  moduli space $\mc{M}_{H_J(M)}$ and $\mc{M}_{K_J(M)}$ the moduli spaces of \emph{hermitian} and \emph{k\"ahlerian monopoles}, respectively. 

To express Seiberg-Witten equations on a K\"ahler surface $(M,g,J)$ for the $Spin^c$-structure $\xi_N$, we fix the 
Chern connection $A_{K_M}$ on $K_M$ and make the changement of variables: 
$\mc{A}_{U(1)}(N)  \simeq \mc{A}_{U(1)}(L)$ given by
$A  \rMapsto  A^*_{K_M} \tens A^{\tens 2}$. 
The Dirac operator for this 
$Spin^c$-structure and for $A \in \mc{A}_{U(1)}(N)$ is: 
$D_A := \sqrt{2}(\dbar_A + \dbar_A^*) $. The Seiberg-Witten equations on a 
compact K\"ahler surface for a spinor $(\alpha, \beta) \in A^{0,0}(N) \oplus 
A^{0,2}(N)$ and for a $U(1)$-connection $A$ on $N$ read:
\begin{gather*}
\dbar_A \alpha + \dbar_A^* \beta =0 \\ 
F^{0,2}_A = \frac{\bar{\alpha}\beta}{2} \\
2  F_A^{1,1} -  F_{K_M} = i \frac{|\alpha|^2-|\beta|^2}{4} \omega_g
\end{gather*}
where we  split the second equation according the splitting of 
self dual $2$-forms in $ \Lambda^2_+T^*M \tens \mbb{C} \simeq 
 \Lambda^{2,0}T^*M \oplus  \Lambda^{0,2}T^*M \oplus \mbb{C} \omega_g $. 
It is well known that if $\deg(L)< 0$ then $(A, \alpha, \beta)$ is a solution 
of the Seiberg-Witten equations if and only if $\dbar_A$ is a 
holomorphic structure for $N$, $\alpha$ is a non zero holomorphic section of 
$(N, \dbar_A)$ and $\beta=0$. Analogously, if $\deg(L)> 0$, we have 
a solution whenever $\alpha =0$, $\dbar_{A^* \tens A_{K_M}}$ is a holomorphic 
structure of $N^* \tens K_M$ and $\sharp \beta$ is a non 
zero holomorphic section of 
$(N^* \tens K_M, \dbar_{A^* \tens A_{K_M}})$, where $\sharp$ denotes here the 
complex Hodge star operator; the involution $\jmath: (A, \alpha, \beta) 
\rMapsto (A^* \tens A_{K_M}, \sharp \beta, \sharp \alpha)$ 
exchanges solutions of
Seiberg-Witten equations for the $Spin^c$ structure $\xi_N$ and
solutions for the $Spin^c$-structure $\xi_{N^* \tens K_M}$.
Moreover $(A, \alpha, \beta)$ is a reducible solution if and only if $\deg(L)=0$ and $A$ is self-dual. As a consequence 
$K_J(M) \cap \Met(M)_{c-\textrm{good}} =\{ g \in K_J(M) \:| \; [\omega_g] \cup c 
\neq 0 \}$. Therefore: $$ \M^{*}_{K_J(M)}=\mc{M}_{K_J(M)} \cap \mc{M}_{\Met(M)}^* = \M_{K_J(M)} \cap \M^{**}_{\Met(M)} 
= \M^{**}_{K_J(M)} \;.$$
In this 
section we will prove the following theorem: 
\begin{theorem} \label{teorema}
The parametrized moduli space of hermitian Seiberg-Witten monopoles $(\M_{H_J(M)})^{2,r}_p$, and hence the universal moduli space  $(\mc{M}_{\Met(M)})^{2,r}_p$, is smooth at 
irreducible k\"ahlerian monopoles~$\M_{K_J(M)}^*$.
\end{theorem}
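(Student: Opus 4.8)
The plan is to combine the reduction carried out in Section~\ref{obstransv} with the explicit complex geometry of a K\"ahler monopole. By Proposition~\ref{ciana}, the identification (\ref{ecci})--(\ref{BBB}), and Remarks~\ref{transvrmk1}--\ref{transvrmk2} and~\ref{cinfty}, it is enough to prove that for every irreducible k\"ahlerian monopole $x=([A,\psi],\xi)$ — which, by Remark~\ref{rmk:geq}, we may take with $A,\psi,\xi$ of class $C^{\infty}$ — the only pair $(\chi,\theta)\in\Gamma(W_-)\oplus iA^2_+(M)$ satisfying equations (\ref{ke1}), (\ref{ke2}), (\ref{ke4}), (\ref{ke5}) together with the vanishing of the $T_{g_{\xi}}H_J(M)$-orthogonal projection of the left-hand side of (\ref{ke3}) is $(\chi,\theta)=(0,0)$. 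Here $\mc{M}^*_{K_J(M)}=\mc{M}^{**}_{K_J(M)}$ since $2\deg N\neq\deg K_M$ is precisely the condition that $g$ be $c$-good among K\"ahler metrics (and $b_+(M)>0$ automatically), so that no reducible interferes. Note that Lemma~\ref{icci} still applies, because the trace direction $C^{\infty}(M)\cdot\id_{TM}$ is contained in $T_{g_{\xi}}H_J(M)$; hence we still have $(F_A^+,\theta)=0$ and $\di(\psi^*\tens\chi)=0$.

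Since the involution $\jmath$ exchanges the $Spin^c$-structures $\xi_N$ and $\xi_{N^*\tens K_M}$, reverses the sign of $\deg L$, and is induced by a (conjugate-linear) bundle isometry, it suffices to treat the case $\deg L<0$. Then, up to gauge, $\psi=\alpha$ with $0\neq\alpha\in H^0(M,(N,\dbar_A))$, $\beta=0$ and $F_A$ purely of type $(1,1)$, so $F_A^-$ is a primitive $(1,1)$-form and $F_A^+$ is a function times $\omega_g$. I would then rewrite the kernel equations on the K\"ahler surface using $W_-=\Lambda^{0,1}(N)$, $W_+=\Lambda^{0,0}(N)\oplus\Lambda^{0,2}(N)$, the type splitting $\theta=\theta^{2,0}+if\,\omega_g+\theta^{0,2}$ with $f$ real and $\theta^{0,2}=-\overline{\theta^{2,0}}$, and the identification of $\psi^*\tens\chi$ with $\langle\chi,\alpha\rangle\in A^{0,1}(M)$. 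The decisive simplification is the splitting of symmetric endomorphisms into those commuting and those anticommuting with $J$,
$$\sym(TM,g_{\xi})\simeq\herm_J(TM,g_{\xi})\oplus\herm_J^{\perp}(TM,g_{\xi}),$$
where $\herm_J(TM,g_{\xi})=T_{g_{\xi}}H_J(M)$ is the bundle of $J$-commuting symmetric endomorphisms. With respect to it, the term $-2(F_A^-)^*\tens\theta$ of (\ref{ke3}) has $\herm_J$-component proportional to $f\,F_A^-$, while its whole dependence on $\theta^{2,0}+\theta^{0,2}$ lies in the discarded $\herm_J^{\perp}$-component; and, using $D_A=\sqrt2(\dbar_A+\dbar_A^*)$ and $\rho_{\xi_0}$ as in Section~\ref{2.5}, equation (\ref{ke2}) splits into
$$\dbar_A^*\chi=\tfrac{1}{\sqrt2}\,f\alpha,\qquad \dbar_A\chi=(\mathrm{const})\,\theta^{0,2}\w\alpha.$$
Pairing the first identity with $\alpha$, integrating and using holomorphicity $\dbar_A\alpha=0$, one gets $\int_M f\,|\alpha|^2=0$.

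The heart of the proof is to deduce $\chi=0$. For this I would set up an integration-by-parts / Weitzenb\"ock identity: substitute the two halves of (\ref{ke2}) into $\int_M\langle\Delta_{\dbar_A}\chi,\chi\rangle=\|\dbar_A\chi\|^2+\|\dbar_A^*\chi\|^2$, couple it with the $\herm_J$-projection of (\ref{ke3}) — in which the term $-\sym\pr(\nabla^W_A\psi^*\tens\chi)$, the transpose of the Dirac-variation term $-\rho_{\xi}\circ s^*\circ\nabla^{W,\xi}_A\psi$ of (\ref{Dirac}), i.e.\ exactly the contribution omitted in the Eichhorn--Friedrich argument (Remark~\ref{EF}), is paired against $\chi$ — and with equations (\ref{ke1}), (\ref{ke4}), (\ref{ke5}), the relations of Lemma~\ref{icci}, the K\"ahler identities and the third Seiberg--Witten equation $2\Lambda F_A-\Lambda F_{K_M}=\tfrac{i}{2}|\alpha|^2$. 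These should combine to an identity of the form $\int_M\big(|\nabla_A\chi|^2+(\text{nonnegative terms})\big)=0$, whence $\chi=0$; the hypothesis $\deg L\neq0$ (equivalently $\|\alpha\|_{L^2}>0$, and no self-dual reducible) is what gives the relevant terms the right sign, and $\int_M f|\alpha|^2=0$ removes the potentially indefinite coupling.

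Once $\chi=0$, equation (\ref{ke2}) forces $f\alpha=0$ and $\theta^{0,2}\w\alpha=0$; since $\alpha$ is a nonzero holomorphic section it vanishes only along a proper analytic divisor, so $f\equiv0$ and $\theta^{0,2}\equiv0$, hence $\theta^{2,0}=-\overline{\theta^{0,2}}\equiv0$ and $\theta=0$. This proves that $D_x\Psi_{H_J(M)}$ is surjective; since $\Psi_{H_J(M)}$ is the restriction of $\Psi_{\Met(M)}$ to the submanifold $(\mathfrak{B}^*_{H_J(M)})^{2,r}_p$, the differential $D_x\Psi_{\Met(M)}$ is surjective too, so $(\mc{M}_{\Met(M)})^{2,r}_p$ is smooth at $x$ as well, which gives the last assertion. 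The main obstacle is the coercive estimate of the third step: extracting a sign-definite identity from the hermitian third equation requires controlling the coupled term $-\sym\pr(\nabla^W_A\psi^*\tens\chi)+\tfrac12 L_{\pr(\psi^*\tens\chi)}g_{\xi}$ — precisely the delicate variation of the Dirac operator, and exactly where previous attempts broke down.
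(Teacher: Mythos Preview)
Your reduction to the kernel equations is correct and your endgame (once $\chi=0$, use holomorphicity of $\alpha$ to kill $\theta$) is fine. But the heart of your argument is only a hope: you write that the $\herm_J$-projection of (\ref{ke3}) ``should combine'' with a Weitzenb\"ock formula into a sign-definite identity, and you yourself flag this as ``the main obstacle''. That is the gap --- you have not proved anything in the decisive step.

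The reason this step is much easier than you think is a structural fact you missed. At a K\"ahler monopole with $\psi=(\alpha,0)$ and $\dbar_A\alpha=0$, the covariant derivative $\nabla_A^W\psi^*$ is $\overline{\partial_A\alpha}$, hence of pure type $(0,1)$; consequently the tensor $\nabla_A^W\psi^*\tens\chi$ lies in $\Lambda^{0,1}T^*M\tens\Lambda^{0,1}T^*M$ and, by Remark~\ref{rmk:decouseful}, its symmetrized real part lands entirely in $\mathfrak{su}(TM,J)$. So the very term you believe to be ``paired against $\chi$'' in the hermitian projection actually drops out of it. Likewise, (\ref{ke4}) gives $\lambda|\alpha|^2=0$ \emph{pointwise} (not just in integral), so $\lambda=0$ everywhere, which also kills the $(F_A^-)^*\tens\lambda\omega_g$ term. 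What survives in the $\herm_J$-projection is therefore just the hermitian part of $\tfrac12 L_{\pr(\psi^*\tens\chi)}g$, i.e.\ (as a real $(1,1)$-form)
\[
\partial(\bar\alpha\chi)-\dbar(\alpha\bar\chi)=0.
\]
From here no Weitzenb\"ock identity is needed: apply $\partial^*$, use the K\"ahler identity $\partial^*\dbar=-\dbar\partial^*$ together with $\dbar^*(\bar\alpha\chi)=0$ (your equation (\ref{ke5})) to get $\Delta_\partial(\bar\alpha\chi)=0$; hence $\bar\alpha\chi$ is harmonic and in particular $\dbar(\bar\alpha\chi)=0$. Applying $\dbar$ to (\ref{ke1}) then gives $\Delta_{\dbar}\mu=0$, so $\dbar^*\mu=0$, whence $\bar\alpha\chi=0$ and $\chi=0$; finally $\mu\alpha=\sqrt2\,\dbar_A\chi=0$ forces $\mu=0$. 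This is the paper's argument, and it replaces your undetermined coercive estimate by a two-line use of the K\"ahler identities.
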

By remarks \ref{cinfty}, \ref{postequi}, \ref{postequi2}, we can paraphrase this theorem as:
\begin{theorem}
Let $(M,g,J)$ a K{\"a}hler surface. Let $N$ a hermitian line
  bundle on $M$ such that $2 \deg(N) -\deg(K_M) \neq 0$.  Consider the $Spin^c$-structure $\xi_N$, obtained by twisting the canonical one
 with the hermitian line bundle $N$. 
For a generic hermitian metric $h \in H_J(M)$ in a small open neighbourhood of $g$ and for any $Spin^c$-structure $\xi^{\prime}$ of fundamental class 
$c(\xi_N)= c_1(M) + 2 c_1(N)$, compatible with $h$,  
 the Seiberg-Witten moduli space $\mc{M}_{\xi^{\prime}}^{SW}$
  is smooth of the expected dimension. 
The statement holds as well for a generic riemannian
metric $h \in \Met(M)$ in a small open neighbourhood of $g$.
\end{theorem}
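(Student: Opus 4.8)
The plan is to combine Proposition~\ref{ciana}, the identity~(\ref{BBB}) and Remark~\ref{transvrmk2} to reduce the statement to a vanishing assertion for an adjoint operator, and then to exploit the K\"ahler structure to prove that vanishing. First I would note that, by Remark~\ref{cinfty} applied with $S=K_J(M)$ and $T=H_J(M)$, it suffices to prove that the intrinsic differential $D_x\Psi_{H_J(M)}$ is surjective at every irreducible k\"ahlerian monopole $x=([A,\psi],\xi)\in\M^*_{K_J(M)}$; since every $Spin^c$-structure of fundamental class $c=c_1(M)+2c_1(N)$ is, up to isomorphism, the canonical one twisted by such an $N$, and since $2\deg N-\deg K_M=\deg L\neq 0$ rules out reducibles (so that $\M^*_{K_J(M)}=\M^{**}_{K_J(M)}$), I may take $\xi=\xi_N$. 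By Proposition~\ref{ciana}, (\ref{BBB}) and Remark~\ref{transvrmk2}, this surjectivity is equivalent to the triviality of the kernel of $(D_{(A,\psi,g_\xi)}\tilde{\mbb{F}}_{H_J(M)})^*=(\id\oplus P)\circ(D_{(A,\psi,g_\xi)}\tilde{\mbb{F}})^*$, where $P$ is the orthogonal projection of $\sym(TM,g_\xi)$ onto the Hermitian (i.e.\ $J$-commuting) symmetric tensors $T_{g_\xi}H_J(M)$; concretely this kernel consists of the pairs $(\theta,\chi)$ solving~(\ref{ke}) with~(\ref{ke3}) replaced by its Hermitian-symmetric part. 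Since $P$ preserves traces, Lemma~\ref{icci} — hence~(\ref{ke4}) and~(\ref{ke5}) — survives this replacement. Finally, the involution $\jmath\colon(A,\alpha,\beta)\mapsto(A^*\tens A_{K_M},\sharp\beta,\sharp\alpha)$ interchanges solutions for $\xi_N$ and $\xi_{N^*\tens K_M}$ and reverses the sign of $\deg L$, so it is enough to treat the case $\deg L<0$.

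Next I would rewrite the system~(\ref{ke}) in K\"ahler terms. For $\deg L<0$ a k\"ahlerian monopole has $\psi=(\alpha,0)\in A^{0,0}(N)\oplus A^{0,2}(N)$ with $\dbar_A$ a holomorphic structure on $N$ and $\alpha$ a nonzero holomorphic section; in particular $F_A^{0,2}=0$, whence $F_A=F_A^{1,1}$, $F_A^+=i\lambda\,\omega_g$ for a real function $\lambda$, and $F_A^-$ is the primitive part of $F_A$. Writing $\chi\in A^{0,1}(N)$ and $\theta=\theta^{2,0}+\overline{\theta^{2,0}}+i\phi\,\omega_g$ with $\phi$ real, and using the explicit Clifford multiplication $\rho_{\xi_0}$ together with $D_A^\xi=\sqrt{2}\,(\dbar_A+\dbar_A^*)$, I would split~(\ref{ke2}) into an $A^{0,0}(N)$-component relating $\dbar_A^*\chi$ to a multiple of $\phi\alpha$ and an $A^{0,2}(N)$-component relating $\dbar_A\chi$ to a multiple of $\alpha\,\theta^{0,2}$; identify $u:=\rho_\xi^{-1}(\psi^*\tens\chi)\in A^1(M,\mbb{C})$ as the $(0,1)$-form $\langle\chi,\alpha\rangle_N$ (the $N$-factors cancelling), so that~(\ref{ke5}) reads $\dbar^*u=0$ and the type decomposition of~(\ref{ke1}) reads $\dbar^*\theta^{0,2}-\dbar\phi=-\tfrac12u$ together with its conjugate, while~(\ref{ke4}) becomes the pointwise identity $\phi\,\lambda=0$; and I would record that the Hermitian-symmetric part of~(\ref{ke3}) expresses $\sym\pr(\nabla^W_A\psi^*\tens\chi)$ — which, since $\dbar_A\alpha=0$, involves only $\partial_A\alpha$ and $\chi$ — in terms of $L_{\pr(\psi^*\tens\chi)}g_\xi$ and $(F_A^-)^*\tens\theta$.

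The heart of the argument would be a Bochner-type integration by parts. Using $\dbar_A\alpha=0$ and compatibility of the Chern connection with the Hermitian metric on $N$, one has $\dbar u=\dbar\langle\chi,\alpha\rangle_N=c\,\langle\dbar_A\chi,\alpha\rangle_N-c'\,\langle\chi,\partial_A\alpha\rangle_N$ for universal nonzero constants $c,c'$; the $A^{0,2}(N)$-component of~(\ref{ke2}) turns the first term into a multiple of $|\alpha|^2\theta^{0,2}$, and the Hermitian part of~(\ref{ke3}) controls the second. Pairing the $(0,1)$-component of~(\ref{ke1}) with $u$ in $L^2$, the $\dbar\phi$-term drops out because $\dbar^*u=0$, and $\langle\dbar^*\theta^{0,2},u\rangle=\langle\theta^{0,2},\dbar u\rangle$; combining this with~(\ref{ke3}) and~(\ref{ke5}) I expect to reach an \emph{a priori} identity exhibiting a sum of nonnegative $L^2$-quantities — among them $\|u\|_{L^2}^2$ and $\int_M|\alpha|^2|\theta^{0,2}|^2\,dV$ — equal to zero, whence $u=0$ and $\theta^{0,2}=0$ on $\{\alpha\neq 0\}$, hence $\theta^{0,2}\equiv 0$ by continuity and, $\theta$ being real, $\theta^{2,0}\equiv 0$. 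From $u=\langle\chi,\alpha\rangle_N=0$ and $N$ being a line bundle, $\chi$ vanishes wherever $\alpha\neq 0$; the $A^{0,0}(N)$-component of~(\ref{ke2}) then gives $\phi\,\alpha=0$ on $\{\alpha\neq 0\}$, so $\phi\equiv 0$ and $\theta=0$; and then~(\ref{ke2}) reduces to $D_A^\xi\chi=0$, so $\chi\equiv 0$ by unique continuation for the Dirac operator. Thus $(\theta,\chi)=0$. The delicate point is the production of this identity: one must make the Clifford-algebra constants, the K\"ahler identities and the holomorphicity $\dbar_A\alpha=0$ fit together, the term $\sym\pr(\nabla^W_A\psi^*\tens\chi)$ of~(\ref{ke3}) being essential — it is precisely this term (coming from $-\rho_\xi\circ s^*\circ\nabla^W_A\psi$ in the variation of the Dirac operator) whose neglect invalidates the argument of Eichhorn and Friedrich discussed in Remark~\ref{EF}.

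Finally, surjectivity of $D_x\Psi_{H_J(M)}$ at every point of $\M^*_{K_J(M)}$ gives, by the discussion in Remark~\ref{cinfty}, that $(\M_{H_J(M)})^{2,r}_p$ — and a fortiori the universal moduli space $(\M_{\Met(M)})^{2,r}_p$ — is a smooth Banach manifold near $\M^*_{K_J(M)}$, which is the asserted smoothness. To obtain the generic-metric reformulation I would fix $\xi=\xi_N$, take a small neighbourhood $V_\xi$ of $\xi$ in $(\Xi^{**}_{H_J(M)})^{2,r}_p$ over which (by compactness of the fibre $\M_{\xi_N}$, itself compact because $\deg L\neq 0$ forbids reducibles) $D_y\Psi_{H_J(M)}$ is surjective for all $y\in\pi_{H_J(M)}^{-1}(V_\xi)$, and apply Sard--Smale to the resulting Fredholm map $\pi_{H_J(M)}^{-1}(V_\xi)\to V_\xi$ with compact fibres; this yields a generic $C^\infty$ hermitian metric $h$ near $g$ for which $\M_{\xi'}^{SW}$ is smooth of the expected dimension. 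Since every $Spin^c$-structure of fundamental class $c(\xi_N)$ near $g$ is of the form $\xi_{N'}$ with $\deg N'=\deg N$, hence with $2\deg N'-\deg K_M\neq 0$, the argument applies verbatim to each such $\xi'$; and the statement for a generic \emph{riemannian} metric near $g$ follows because, as recorded above, $\M^*_{K_J(M)}=\M^*_{\Met(M)}\cap\M_{K_J(M)}$.
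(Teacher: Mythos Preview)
Your reduction is correct and follows the paper closely: one must show that the kernel of $(D_{(A,\psi,g_\xi)}\tilde{\mbb{F}}_{H_J(M)})^*$ is trivial at an irreducible K\"ahlerian monopole $([A,(\alpha,0)],\xi_N)$ with $\deg L<0$, and Remark~\ref{cinfty} together with Remarks~\ref{postequi}, \ref{postequi2} then yields the generic-metric statements.

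The gap is in your treatment of the hermitian projection of~(\ref{ke3}). You assert that the hermitian part of~(\ref{ke3}) ``expresses $\sym\pr(\nabla^W_A\psi^*\tens\chi)$ in terms of $L_{\pr(\psi^*\tens\chi)}g_\xi$ and $(F_A^-)^*\tens\theta$'', and then plan to use this to control the cross term $\langle\chi,\partial_A\alpha\rangle_N$ arising in $\dbar u$. But on a K\"ahler monopole with $\psi=(\alpha,0)$ one has $\nabla^W_A\psi^*=\overline{\partial_A\alpha}\in A^{0,1}(N^*)$, so the $2$-tensor $\nabla^W_A\psi^*\tens\chi$ lies in $\Lambda^{0,1}T^*M\tens\Lambda^{0,1}T^*M$; by Remark~\ref{rmk:decouseful} its symmetrized real part is in $S^{0,2}T^*M\simeq\mf{su}(TM,J)$, i.e.\ it is \emph{anti}-hermitian. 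Hence this term is annihilated by the projection $P_{T_{g_\xi}H_J(M)}$ and does not appear in the hermitian component of~(\ref{ke3}) at all. After also using $\lambda=0$, the hermitian component of~(\ref{ke3}) reduces (via Remark~\ref{decormk3}) to the single $(1,1)$-identity $\partial(\bar\alpha\chi)-\dbar(\alpha\bar\chi)=0$; it carries no information about the $(0,2)$-form $\overline{\partial_A\alpha}\wedge\chi$. Consequently, in your $L^2$-pairing the term $\langle\theta^{0,2},\overline{\partial_A\alpha}\wedge\chi\rangle$ is left uncontrolled, and the ``sum of nonnegative quantities equals zero'' identity does not materialise. (The irony is that the very term you single out as essential is precisely the one discarded by restricting to hermitian perturbations.)

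The paper's argument uses the hermitian equation in a different and much shorter way. Set $u:=\bar\alpha\chi\in A^{0,1}(M)$. The system one must solve is
\[
2\sqrt{2}\,\dbar^*\mu+u=0,\qquad \dbar_A^*\chi=0,\qquad \sqrt{2}\,\dbar_A\chi-\mu\alpha=0,\qquad \lambda=0,\qquad \partial u-\dbar\bar u=0,
\]
together with $\dbar^*u=0$ (which follows from the first equation, or from~(\ref{ke5})). Apply $\partial^*$ to the last equation: $\Delta_\partial u=\partial^*\dbar\bar u$. The K\"ahler identity $\partial^*\dbar+\dbar\partial^*=0$ turns the right-hand side into $-\dbar\partial^*\bar u=-\dbar\,\overline{\dbar^*u}=0$. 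Thus $u$ is $\Delta_\partial$-harmonic, hence $\Delta_{\dbar}$-harmonic, so $\dbar u=0$. Applying $\dbar$ to the first equation gives $\Delta_{\dbar}\mu=0$, whence $\dbar^*\mu=0$ and therefore $u=0$; then $\chi=0$ on $\{\alpha\neq0\}$ and by continuity everywhere, and the third equation forces $\mu=0$. No Bochner-type integration by parts, and no control of $\partial_A\alpha$, is needed.
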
 

In the next subsection we make use of the complex structure $J$ to split the
symmetric endomorphisms $\sym(TM,g)$ of $TM$ with respect to a 
 $J$-hermitian metric $g$ in hermitian and anti-hermitian ones. In subsection \ref{obsKahl} we write down equations (\ref{ke}) in the K\"ahler context and in subsection \ref{proofthm} we will finally prove theorem \ref{teorema}.
 
\subsection{A decomposition for symmetric $2$-tensors}\label{subsect:decomp}
The endomorphisms $\End(TM)$ of the tangent bundle $TM$ decompose, thanks to the complex structure~$J$, in $J$-linear and $J$-antilinear ones:
$\End(TM) \simeq \End(TM,J) \oplus \overline{\End(TM,J)}$. 
Consider now a metric $g$, \emph{hermitian} with respect to $J$.
The previous decomposition of $\End(TM)$ induces a decomposition of
the symmetric 
endomorphism $\sym(TM,g)$ of $TM$, with respect to $g$, in hermitian and 
anti-hermitian ones: 
\begin{equation}\label{decosym}
\sym(TM,g) \simeq \mathfrak{u}(TM,J) \oplus \mathfrak{su}(TM,J) \;,
\end{equation}where $\mathfrak{u}(TM,J) = \sym(TM,g) \cap \End(TM,J)$ and $\mathfrak{su}(TM,J) = \sym(TM,g) \cap \overline{\End(TM,J)}$. 
Analogously, symmetric $2$-tensors in $S^2 T^*M$ can be decomposed in the direct sum 
$S^2(T^*M) \simeq S^{1,1}T^*M \oplus S^2_{AH}T^*M$
of hermitian $2$-tensors 
$S^{1,1}T^*M = 
\{ s \in S^2T^*M \: | \; s(JX, JY) =s(X,Y) \; \; \forall \;  X,Y \in TM 
\}$ and antihermitian ones: $S^2_{AH}T^*M = \{ s \in S^2T^*M \: | \; s(JX, JY) = -s(X,Y) \; \; \forall \: X,Y \in TM 
\}  $. 
The decompositions for $\sym(TM,g)$ and $S^2T^*M$ identify one to the other 
once we identify  tangent and  cotangent bundle by means of the metric $g$.

Consider now the complexified tangent bundle $TM \tens \mbb{C}$. The
complex 
symmetric $2$-tensors $S^2(T^*M \tens \mbb{C})$ split, according to the decomposition
$TM \tens \mbb{C} =T^{1,0}M \oplus T^{0,1}M$ as $$
S^2(T^*M \tens \mbb{C}) = S^{2,0}T^*M \oplus S^{0,2}T^*M \oplus
S^{1,1}_{\mbb{C}} T^*M $$where we indicate $S^2(\Lambda^{1,0}T^*M)$
with $S^{2,0}T^*M$,  
$S^2(\Lambda^{0,1}T^*M)$ with $S^{0,2}T^*M$ and with
$S^{1,1}_{\mbb{C}}T^*M$ the subbundle of $\Lambda^{1,0}T^*M \tens
\Lambda^{0,1}T^*M \oplus \Lambda^{0,1}T^*M \tens \Lambda^{1,0}T^*M$
invariant by the transposition of factors $\tau$ in the tensor
product; in these notations the hermitian $2$-tensors $S^{1,1}T^*M$
introduced above coincide with the subspace of real tensors in
$S^{1,1}_{\mbb{C}}T^*M$, that is, tensors invariant by conjugation.
 Let now $s \in S^2T^*M$, extended by $\mbb{C}$-linearity to 
the element  $s_{\mbb{C}}\in S^2(T^*M \tens \mbb{C})$; according to the above decomposition, $s_{\mbb{C}}$ can be written as
$s_{\mbb{C}}= s_{2,0} + s_{0,2} + s_{1,1}$, with $s_{2,0}=\overline{s_{0,2}}$ and $\overline{s_{1,1}}=s_{1,1}$. It is clear that $s \in S^{1,1}T^*M$ if and only if $s_{0,2}=0$;
in this case $s_{1,1}$ defines an hermitian form 
on $T^{1,0}M$; hence $S^{1,1}T^*M \simeq \Herm(T^{1,0}M)$. On the other hand, 
$s \in S^2_{AH}T^*M$ if and only if $s_{1,1}=0$; in this case $s_{2,0}$ and $s_{0,2}$ define quadratic forms on $T^{1,0}M$ and $T^{0,1}M$, respectively, one conjugated of the other. Hence $S^2_{AH}T^*M \simeq S^{2,0}T^*M \simeq S^{0,2}T^*M$.

Using the complexified metric 
$g_{\mbb{C}}$ to identify $T^*M \tens \mbb{C}$ with 
$TM \tens \mbb{C}$, the previous considerations can be stated 
for $\mbb{C}$-linear extensions of symmetric endomorphisms $f \in \sym(TM,g)$ 
to 
$f_{\mbb{C}} \in \End(TM \tens \mbb{C})$. An endomorphism $f \in \End(TM)$ extends by 
$\mbb{C}$-linearity to an endomorphism $f_{\mbb{C}} \in \End(TM \tens \mbb{C})$ such that 
 $f_{\mbb{C}}(\bar{z}) = \overline{f_{\mbb{C}}(z)}$ for all $z \in TM \tens \mbb{C}$. According to the decomposition $TM \tens \mbb{C} \simeq T^{1,0}M \oplus T^{0,1}M$ this extension can be written as: 
 \begin{equation}\label{decomorf} f = \matrice{cc}{a & \bar{b} \\ b & \bar{a}} \;. 
 \end{equation}The endomorphism $f$ is then $J$-linear if and only if $b=0$, $J$-antilinear if and only if $a =0$. Moreover, $f$ is symmetric with respect to $g$ if and only if  
$(Z,W) \rMapsto g(a(Z),\bar{W})$ is an hermitian form on $T^{1,0}M$ and
 $(Z, W) \rMapsto g(b(Z), W)$ is a complex quadratic form on $T^{1,0}M$. 
 Hence we can identify $\mf{u}(TM,J) \simeq \Herm(T^{1,0}M)$; $\mf{su}(TM,J) \simeq S^{2,0}T^*M$. Analogously, using $\bar{a}$ and $\bar{b}$ we get identifications $ \mf{u}(TM,J) \simeq \Herm(T^{0,1}M)$; $\mf{su}(TM,J) \simeq S^{0,2}T^*M$.
\begin{remark}\label{decormk3}The space of hermitian $2$-tensors $S^{1,1}T^*M$ is isomorphic to the space of real $(1,1)$ forms 
$\Lambda^{1,1}_{\mbb{R}}T^*M$ via the isomorphism: $s \rMapsto  
s(\cdot , J(\cdot))$. In local coordinates, if $a \in S^{1,1}T^*M \simeq 
\Herm(T^{1,0}M)$ is given by $a= \sum_{i,j} a_{i\bar{j}}dz_i \tens d\bar{z}_j$, with $a_{i\bar{j}}$ an hermitian matrix, the associated real $(1,1)$-form 
is given by $-2i \sum_{ij} a_{i \bar{j}} dz_i \wedge  d\bar{z}_j$.
\end{remark}
\begin{remark}\label{rmk:decouseful}Let $u \in \Lambda^{1,0}T^*M \tens \Lambda^{0,1}T^*M$. Let $\sigma(u)$ the $(1,1)$-form in $\Lambda^{1,1}T^*M$ obtained by the projection of $u$ on 
$\Lambda^2(T^*M \tens \mbb{C})$. 
Denote moreover with $\herm u$ the hermitian part of the sesquilinear form
on $T^{1,0}M$ defined by $u$: $\herm u = 1/2 (u + \overline{\tau(u)})$.
Then 
$\sym \pr  u \in \mf{u}(TM,J)$ and coincides with 
$1/2 \herm u = 1/4 \: (u + \overline{\tau(u)})$
as hermitian form on $T^{1,0}M$. By the previous remark, 
the associated real $(1,1)$-form to $\sym \pr u $ is $-i/2 \: (\sigma(u) - \overline{\sigma(u)})$.
Let now $v \in \Lambda^{0,1}T^*M \tens \Lambda^{0,1}T^*M$. Then $\sym \pr v \in 
\mf{su}(TM,J)$ and coincides with $1/2 \sym v \in S^{0,2}T^*M$, in the identification $\mf{su}(TM,J) \simeq S^{0,2}T^*M$.
\end{remark}
We need now to take into account the decomposition (\ref{decosym}) in the isometry: $ \delta_- \colon \sym_0(TM,g) \rTo \Hom(\Lambda^2_-T^*M, \Lambda^2_+T^*M)$. 
We identify
$\Lambda^2_-T^*M$ with $\Lambda^{1,1}_{\omega_g^{\perp}, \mbb{R}}$, that
is, with the real $(1,1)$-forms orthogonal to the Kahler form $\omega_{g}$,
and $\Lambda^2_+ T^*M$ with $\Lambda^{0,2}T^*M \oplus \mbb{R} 
\omega_{g}$.   
If $f \in \sym(TM,g)$, let $a(f) \in \End(T^{1,0}M)$ and $b(f) \in
\Hom(T^{1,0}M, T^{0,1}M)$ be the components of the extension of $f$ to
$TM \tens \mbb{C}$
seen in (\ref{decomorf}). Set $\mathfrak{u}_0(TM,J) = \mathfrak{u}(TM,J) \cap 
\sym_0 (TM,g)$. With this notations we have:
\begin{lemma}\label{lmm:decomptype}For all $f  \in \mathfrak{u}_0(TM,J)$ then 
$ \delta_{-}(f) \Lambda ^{1,1}_{\omega_g^{\perp}, \mbb{R}} \subseteq \mbb{R} 
\omega_{g}$. Therefore the isometry $\delta_{-} \colon \sym_0(TM,g) \rTo 
\Hom(\Lambda^2_-T^*M,\Lambda^2_+T^*M)$ splits, according to the  decomposition (\ref{decosym}),~as~:
\begin{diagram}[height=.5cm]
\mathfrak{u}_0(TM,J) \oplus \mathfrak{su}(TM,J) &
\rTo &
\Hom(\Lambda^{1,1}_{\omega_g^{\perp}, \mbb{R}}, \Lambda^{0,2}T^*M)
\oplus  \Hom(\Lambda^{1,1}_{\omega_g^{\perp}, \mbb{R}}, \mbb{R} \omega_{g}) \\  
\quad \quad  ( s ,t) \quad \quad & \rMapsto & \; \; (\; 
\delta_{-}(\bar{b}(t)) \;,  \; \delta_{-}(s) \; )
\end{diagram}    
\end{lemma}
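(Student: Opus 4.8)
The plan is to reduce the statement to a bidegree count, performed pointwise in a local $g$-unitary $(1,0)$-coframe $\theta^1,\theta^2$ of $T^{1,0}M$. In such a frame $\omega_g$ is a multiple of $i(\theta^1\w\bar\theta^1+\theta^2\w\bar\theta^2)$, and the identifications recalled just before the lemma read $\Lambda^2_-T^*M\simeq\Lambda^{1,1}_{\omega_g^{\perp},\mbb{R}}$ and $\Lambda^2_+T^*M\simeq\mbb{R}\,\omega_g\oplus\Lambda^{0,2}T^*M$; in particular $\Lambda^2_+T^*M\cap\Lambda^{1,1}T^*M=\mbb{R}\,\omega_g$, and every real $2$-form of pure type $(2,0)+(0,2)$ is automatically self-dual. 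I will use throughout that, by definition, $\delta_-(s)$ is the $\Lambda^2_-T^*M\to\Lambda^2_+T^*M$ block $P^{+,g}\circ i(s^*)|_{\Lambda^2_-T^*M}$ of the derivation $i(s^*)$, and that $\sym_0(TM,g)=\mf{u}_0(TM,J)\oplus\mf{su}(TM,J)$ because $\mf{su}(TM,J)$ already consists of trace-free endomorphisms.

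First I would treat $f\in\mf{u}_0(TM,J)$. Since $f$ is $J$-linear (the off-diagonal block $b$ in (\ref{decomorf}) vanishes), the transpose $f^*$ of $f$ preserves the splitting $T^*M\tens\mbb{C}=\Lambda^{1,0}T^*M\oplus\Lambda^{0,1}T^*M$, so the degree-zero derivation $i(f^*)$ preserves each $\Lambda^{p,q}T^*M$ and, being real, preserves real forms. Hence for $\beta\in\Lambda^{1,1}_{\omega_g^{\perp},\mbb{R}}\simeq\Lambda^2_-T^*M$ the form $i(f^*)\beta$ is again a real $(1,1)$-form, so its self-dual part $\delta_-(f)\beta$ lies in $\Lambda^2_+T^*M\cap\Lambda^{1,1}T^*M=\mbb{R}\,\omega_g$. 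This is the first assertion of the lemma, and it shows that the restriction of $\delta_-$ to $\mf{u}_0(TM,J)$ takes values in $\Hom(\Lambda^{1,1}_{\omega_g^{\perp},\mbb{R}},\mbb{R}\,\omega_g)$.

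Next I would treat $t\in\mf{su}(TM,J)$, which is $J$-antilinear (the block $a$ vanishes), hence trace-free and in $\sym_0(TM,g)$. Now $t^*$ interchanges $\Lambda^{1,0}T^*M$ and $\Lambda^{0,1}T^*M$, so $i(t^*)$ maps $\Lambda^{p,q}T^*M$ into $\Lambda^{p+1,q-1}T^*M\oplus\Lambda^{p-1,q+1}T^*M$; applied to a real $(1,1)$-form $\beta$ this produces a real form of pure type $(2,0)+(0,2)$, which is already self-dual, so the projection $P^{+,g}$ acts as the identity and $\delta_-(t)\beta=i(t^*)\beta$ lands in the summand $\Lambda^{0,2}T^*M$ of $\Lambda^2_+T^*M\simeq\mbb{R}\,\omega_g\oplus\Lambda^{0,2}T^*M$. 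Combining the two cases, $\delta_-$ is block-diagonal for the decomposition (\ref{decosym}) of the source and the decomposition $\mbb{R}\,\omega_g\oplus\Lambda^{0,2}T^*M$ of the target, which is the content of the displayed diagram; under the identification $\mf{su}(TM,J)\simeq S^{0,2}T^*M$ of subsection \ref{subsect:decomp} (via the block $\bar b$), the $\mf{su}$-component of $\delta_-$ is precisely the map written $\delta_-(\bar b(t))$ in the statement.

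The two steps above are pure type counts and carry no conceptual difficulty; the only point that needs care is the bookkeeping of the numerical normalizations — those fixed for the inner products on $S^2T^*M$, on $\Lambda^2_\pm T^*M$ and on $\Hom(\Lambda^2_\mp T^*M,\Lambda^2_\pm T^*M)$, together with the identifications of $\mf{u}(TM,J)$ and $\mf{su}(TM,J)$ with spaces of Hermitian forms and of $(0,2)$-tensors — so as to confirm that $\delta_-$ is genuinely the stated isometry and that the two off-diagonal blocks vanish identically rather than merely up to a scalar. I would verify this by a direct evaluation of $i(f^*)(\theta^j\w\bar\theta^k)$ in the chosen unitary coframe, which is routine.
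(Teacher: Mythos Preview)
Your proof is correct and follows essentially the same approach as the paper: both argue by bidegree, using that a $J$-linear symmetric endomorphism induces a derivation preserving $\Lambda^{p,q}T^*M$ (hence $\delta_-(f)$ of a real $(1,1)$-form lands in $\Lambda^2_+T^*M\cap\Lambda^{1,1}T^*M=\mbb{R}\,\omega_g$), while a $J$-antilinear one exchanges types (so $i(t^*)$ of a real $(1,1)$-form is of type $(2,0)+(0,2)$, already self-dual, with $(0,2)$-part given by $\bar b(t)^*$). Your additional remarks about verifying the normalizations in a unitary coframe are sensible but not needed for the lemma as stated, since the isometry property of $\delta_-$ was already established earlier in the paper.
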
\begin{proof}Let $(s,t) \in \mathfrak{u}_0(TM,J) \oplus \mathfrak{su}(TM,J)$. The 
derivation $i(s^*)$
induced by an element $s \in 
\mathfrak{u}_0(TM,J)$ preserves the spaces $\Lambda^{1,1}T^*M$, 
$\Lambda^{2,0}T^*M$ and $\Lambda^{0,2}T^*M$, because $s$ is
$J$-linear. Therefore, for such $s$, $\delta_{-}(s)  
\Lambda ^{1,1}_{\omega_g^{\perp}, \mbb{R}} \subseteq \Lambda^{1,1}T^*M$, but
by definition $\delta_{-}(s) \Lambda^2_-T^*M \subseteq
\Lambda^2_+T^*M$; as a result $\delta_{-}(s) \Lambda^{1,1}_{\omega_g^{\perp}, 
\mbb{R}} \subseteq 
\mbb{R}\omega_{g}
$. Any $t \in \mathfrak{su}(TM,J)$
 is $J$-antilinear, hence its extension to $TM \tens
\mbb{C}$ exchanges $T^{1,0}M$ and $T^{0,1}M$; consequently $i(t^*)
\Lambda^{1,1}T^*M \subseteq \Lambda^{0,2}T^*M \oplus \Lambda ^{2,0}T^*M$. We
can write $t^* = b(t)^* + \bar{b}(t)^*$, with $b(t)^* : \Lambda^{0,1}T^*M
\rTo \Lambda^{1,0}T^*M$, and $\bar{b}(t)^* :  \Lambda^{1,0}T^*M
\rTo \Lambda^{0,1}T^*M$. Therefore $i(b(t)^* )\Lambda^{1,1}T^*M
\subseteq \Lambda ^{2,0}T^*M$ and $i(\bar{b}(t)^*)\Lambda^{1,1} T^*M
\subseteq \Lambda^{0,2}T^*M$. Therefore in the splitting $$  
\Hom(\Lambda^2_-T^*M,\Lambda^2_+T^*M) \simeq
\Hom(\Lambda^{1,1}_{\omega_g^{\perp}, \mbb{R}}, \Lambda^{0,2}T^*M)
\oplus  \Hom(\Lambda^{1,1}_{\omega_g^{\perp}, \mbb{R}}, \mbb{R} \omega_g)
$$ the element $(s,t)$ acts as $\delta_{-}(\bar{b}(t)) \oplus
\delta_{-}(s)$. \end{proof}
\subsection{The obstruction to the transversality on a K\"ahler monopole}\label{obsKahl}
As discussed in subsection \ref{sbvc}, in order to prove theorem \ref{teorema} we need to prove that the intrinsic differential $D_x \Psi_{H_J(M)}$ of the section 
$\Psi_{H_J(M)} : (\mathfrak{B}^*_{H_J(M)})^{2,r}_p \rTo \mf{E}_{H_J(M)}$ 
is surjective at an irreducible k\"ahlerian monopole $x= ([A, \psi],
\xi)$. 
We can suppose that $\xi= \xi_N$ for a certain $N \in \Pic_{\rm
  top}(M)$; moreover,
because of the involution $\jmath$, it is not at all 
restrictive to take $x$ a k\"ahlerian monopole with negative degree.
By subsection \ref{obstransv} and in particular remarks 
\ref{transvrmk1}, \ref{transvrmk2},  the obstruction to the surjectivity of 
$D_x \Psi_{H_J(M)}$ is given by the kernel of the operator 
\begin{equation}\label{ope}
(D_{(A, \psi, g_{\xi})} \tilde{\mbb{F}}_{H_J(M)})^* \simeq 
(\id_{T_{(A, \psi)}\mc{C}} \oplus P_{T_{g_{\xi}} H_J(M)}) \circ (D_{(A, \psi, g_{\xi})} \tilde{\mbb{F}})^* \end{equation}where $P_{T_{g_{\xi}} H_J(M)}$ is the orthogonal projection 
$T_{g_{\xi}} \Met(M) \rTo T_{g_{\xi}} H_J(M)$ onto the tangent space of hermitian metrics. Since, given the K\"ahler metric $g_{\xi}$, we can 
parametrize hermitian metrics with symmetric positive hermitian automorphisms 
$U^+(TM,J) = \Sym^+(TM,J) \cap \End(TM,J)$ with respect to the metric $g_{\xi}$, the tangent space to hermitian metrics is given by:$$ T_{g_{\xi}} H_J(M) \simeq T_{\id}U^+(TM,J) \simeq \mf{u}(TM,J) \;.$$
The form of the operator (\ref{ope}) implies that, to find the obstruction we want, we have to consider equations (\ref{ke}), with equation (\ref{ke3}) projected onto the component in $\mf{u}(TM,J)$, according to the decomposition~(\ref{decosym}).

We are now going to write down the 
kernel equations (\ref{ke}) on the K\"ahler monopole of negative degree 
$x= ([A, \psi], \xi_N)$, where $\psi = (\alpha, 0) \in A^{0,0}(N) \oplus A^{0,2}(N) $. For brevity's sake in the sequel we will indicate the K\"ahler metric $g_{\xi_N}$ just with $g$.
In what follows we will make the following identifications: \begin{itemize}
\item[a)] we will identify imaginary 1-forms in $iA^1(M)$ with $(0,1)$-forms in $A^{0,1}(M)$ via the isomorphism $A^{0,1}(M) \simeq iA^1(M)$ sending $\sigma \rMapsto \sigma -\bar{\sigma}$; 
\item[b)] the imaginary selfdual $2$-forms $iA^2_+(M)$ will be
  identified with forms in $i\mbb{R} \omega_g \oplus A^{0,2}(M)$, since we can always write 
$\theta \in iA^2_+(M)$ as $\theta = \lambda \omega_{g} + \mu -
\bar{\mu}$ for $\lambda \in i\mbb{R}$, $\mu \in A^{0,2}(M)$. 
We can therefore express the isomorphism $iA^2_+(M) \simeq i\mathfrak{su}(W_+)$ as (cf. \cite{MorganSWEAFMT}): 
$$ \lambda \omega_{g} + \mu \rMapsto 2 \matrice{cc}{\lambda & \mu \lrcorner(-) \\
\mu \w  (-) & - \lambda} \;,$$
where the matrix is written according to the decomposition $W_+ \simeq 
\Lambda^{0,0}T^*M \tens N \oplus \Lambda^{0,2}T^*M \tens N$.
\end{itemize}
 \begin{remark}\label{rmk: 1form}Remark that the 1-form $\phi^* \tens
   \zeta$, where 
$\phi \in A^{0,0}(N)$ and $\zeta \in   A^{0,1}(N)$, is given by~$ 1/\sqrt{2} \: \bar{\phi}\zeta \in A^{0,1}(M)$. 
\end{remark}
Equations (\ref{ke1}), (\ref{ke2}), (\ref{ke4}), (\ref{ke5}) now become easily, in the above identifications: 
\begin{subequations}
\begin{gather} \dbar^* \mu + \partial^*(\lambda \omega_{g}) +  \frac{1}{2\sqrt{2}} \bar{\alpha}\chi =0 \\ 
\sqrt{2} \dbar_A^* \chi - \lambda \alpha =0 \\ 
\sqrt{2} \dbar_A \chi - \mu \alpha =0 \\  
\lambda |\alpha|^2 =0 \\ 
\dbar^*(\bar{\alpha}\chi)=0 
\end{gather}
\end{subequations}
Since $\alpha$ is a nonzero holomorphic section of $(N, \dbar_A)$, we get from the fourth equation that $\lambda =0$ on the dense open set $M \setminus Z(\alpha)$ and hence everywhere. The last equation is, consequently, dependent from the first. 
Hence the equations are equivalent to:
 \begin{subequations}\begin{gather}  2\sqrt{2} \dbar^* \mu  +  \bar{\alpha}\chi =0 \\ 
\dbar_A^* \chi =0 \\ 
\sqrt{2} \dbar_A \chi - \mu \alpha =0 \\   
\lambda =0 
\end{gather}\end{subequations}
\paragraph{Contribution of the metric: hermitian perturbations.} We have now to write equation (\ref{ke3}) on the K\"ahler monopole $([A, (\alpha, 0)],\xi_N)$, according to the identifications made. We will use the decomposition (\ref{decosym}) and the identifications $\mf{u}(TM,J) \simeq \Herm(T^{0,1}M)$ and $\mf{su}(TM,J) \simeq 
S^{0,2}T^*M$ provided by subsection \ref{subsect:decomp}. Moreover we will 
identify $TM \tens \mbb{C}$ with $T^*M \tens \mbb{C}$
by means of the complexified metric $g_{\mbb{C}}$ (the
$\mbb{C}$-linear 
extension of $g$ to  to $T^*M \tens \mbb{C}$); it identifies $T^{1,0}M$ with $\Lambda^{0,1}T^*M$ and $T^{0,1}M$ with~$\Lambda^{1,0}T^*M$.

\paragraph{\it The term $\sym \pr (\nabla ^W_A \psi^* \tens \chi)$.} The linear map
$\nabla^W_A \psi^* \in W^*_+ \tens (T^*M \tens \mbb{C})$ becomes the form $\overline{\partial_A \alpha} : N^* \tens \Lambda^{0,1}T^*M$. The complex $2$-tensor 
$\nabla^W_A \psi^* \tens \chi$ 
can be identified, by remark \ref{rmk: 1form}, with the tensor 
$1/\sqrt{2} \: \overline{\partial_A \alpha} \chi \in 
\Lambda^{0,1} T^*M \tens \Lambda^{0,1}T^*M$. 
By remark \ref{rmk:decouseful} we can identify:
$$ \sym \pr (\nabla^W_A \psi^* \tens \chi) = \frac{1}{2\sqrt{2}}
\sym(\overline{\partial_A \alpha }  \chi) \in S^{0,2}T^*M \simeq 
\mathfrak{su}(TM,J) \;.$$
\paragraph{\it The term $L_{\pr  (\psi^* \tens \chi)}g$.} Recall that,
for a real 
vector field $X$, 
$ L_{X}g = 2 \sym \nabla^{g} X^{\flat} $, 
where $X^{\flat}$ denotes the 1-form obtained by $X$ lowering the 
indexes. As a consequence: $L_{\pr (\psi^* \tens \chi)}g = 2 \sym \nabla^{g}\pr ( \psi^* \tens \chi) = 2 \sym \pr \nabla^{g} ( \psi^* \tens \chi)$. By remark \ref{rmk: 1form}, the $2$-tensor $\nabla^{g} (\psi^* \tens \chi)$ is: 
$ \nabla^{g} (\psi^* \tens \chi) = 1/\sqrt{2} \:
 \nabla^{g} (\bar{\alpha} \chi) = 1/ \sqrt{2}\: [ D(\bar{\alpha} \chi) + \bar{D}(\bar{\alpha} \chi) ]$, where we denoted with $D$ and $\bar{D}$ the components $(1,0)$ and $(0,1)$ of $\nabla^{g}$, respectively. The term $\bar{D}(\bar{\alpha} \chi)$ is in 
$\Lambda^{0,1}T^*M \tens \Lambda^{0,1}T^*M $; 
the term $D(\bar{\alpha}\chi)$ is in 
$\Lambda^{1,0}T^*M \tens \Lambda^{0,1}T^*M$. 
Hence, by remark \ref{rmk:decouseful}
$$ L_{\pr (\psi^* \tens \chi)}g = 2 \sym \pr \nabla^{g} (\psi^* \tens \chi) = \frac{1}{\sqrt{2}} [ \herm D(\bar{\alpha}\chi) +
\sym \bar{D}(\bar{\alpha}\chi) ] \;,$$according to the decomposition
$S^2T^*M \simeq \Herm(T^{1,0}M) \oplus S^{0,2}T^*M \simeq
\mathfrak{u}(TM,J) \oplus \mathfrak{su}(TM,J)$. 
\paragraph{\it The term $(F_A^-)^* \tens \theta$.}Writing $\theta = \lambda \omega_{g} + \mu$, the term $(F_A^-)^* \tens \theta$ decomposes in the sum of 
$(F_A^-)^* \tens \lambda \omega_{g}$ and $(F_A^-)^* \tens \mu$. 
The map $\theta \rMapsto 2 (F_A^-)^* \tens \theta$ was built as the adjoint of the map $s \rMapsto \delta_-(s_0) F_A^-$. 
As a consequence of lemma \ref{lmm:decomptype}, $(F_A^-)^* \tens \lambda \omega_{g}$ is in $\mathfrak{u}_0(TM,J) \simeq S^{1,1,}_0T^*M$ (the traceless tensors in $S^{1,1}T^*M$), 
while $(F_A^-)^* \tens \mu$ is in $\mathfrak{su}(TM,J) \simeq S^{0,2}T^*M$.
\paragraph{\it Contribution of hermitian perturbations.} Equation (\ref{ke3}) splits in the two following equations, according to the decomposition (\ref{decosym}): 
\begin{subequations}
\begin{gather} 
\herm D(\bar{\alpha}\chi) -4\sqrt{2} (F_A^-)^* \tens \lambda \omega_{g} =0  \\
- \sym(\overline{\partial_A \alpha}  \chi ) + \sym D(\bar{\alpha}\chi) -4\sqrt{2}
(F_A^-)^* \tens \mu=0 
\end{gather}
\end{subequations}
Identifying elements in $\mathfrak{u}(TM,J)$ with real $(1,1)$-forms, as seen in remarks \ref{decormk3}, \ref{rmk:decouseful}, the first equation become: 
$$
-i( \partial(\bar{\alpha}\chi) - \dbar(\alpha\bar{\chi})) -4\sqrt{2} (F_A^-)^* \tens \lambda \omega_{g} =0 \;.$$It represents the contribution to transversality coming from 
hermitian perturbations of the K\"ahler metric~$g$.
\subsection{Proof of  theorem \ref{teorema}}\label{proofthm}
We  have to prove the surjectivity of the 
differential $D_{(A, \psi, g)} \tilde{\mbb{F}}_{H_J(M)}$ 
on a K\"ahlerian monopole $([A, \psi], \xi) = ([A, (\alpha, 0)], \xi_N)$. The obstruction to the transversality is 
given by a nontrivial solution to the equations of 
$\ker (D_{(A, \psi, g)} \tilde{\mbb{F}}_{H_J(M)})^*$: 
\begin{subequations}
\begin{gather} 2\sqrt{2} \dbar^* \mu +   \bar{\alpha}\chi =0 \\ 
\dbar_A^* \chi =0 \\ 
\sqrt{2} \dbar_A \chi - \mu \alpha =0 \\  
\lambda =0 \\
\partial(\bar{\alpha}\chi) - \dbar(\alpha\bar{\chi})=0
\end{gather}
\end{subequations} 
This system of partial differential equations does not have any nontrivial solution: in order to see this, we apply the operator $\partial^*$ to the last equation, obtaining: 
$$ \Delta_{\partial}(\bar{\alpha}\chi) - \partial^* \dbar(\alpha \bar{\chi}) =0 \;.$$
Using the K\"ahler identity $\partial^* \dbar + \dbar \partial^*=0$, we get 
that $\partial^* \dbar(\alpha \bar{\chi}) = \dbar \partial^* (\alpha \bar{\chi}) =0$, since we already know that $\dbar^*(\bar{\alpha}\chi)=0$. We are left with 
$ \Delta_{\partial}(\bar{\alpha}\chi)=0$, that is, $\bar{\alpha}\chi$ is 
$\Delta_{\partial}$-harmonic. Hence it is $\Delta_{\dbar}$-harmonic and $
\dbar(\bar{\alpha}\chi) =0$. Applying now the $\dbar$ operator in the first equation we get $\Delta_{\dbar}(\mu)=0$, which implies $\dbar^* \mu=0$. Hence $\bar{\alpha}\chi =0$ and 
$\chi=0$. From the third equation  we get $\mu=0$. Therefore there are no nonzero solution to the kernel 
equations on a K\"ahlerian monopole. \hfill $\Box$

\small


\providecommand{\bysame}{\leavevmode\hbox to3em{\hrulefill}\thinspace}
\providecommand{\MR}{\relax\ifhmode\unskip\space\fi MR }
\providecommand{\MRhref}[2]{%
  \href{http://www.ams.org/mathscinet-getitem?mr=#1}{#2}
}
\providecommand{\href}[2]{#2}

\vspace{0.7cm}
\noindent
{Luca Scala,
Department of Mathematics,
University of Chicago,
5734 S. University Avenue,
Chicago IL 60637~USA \;
{\em Email Address:} {\tt lucascala@math.uchicago.edu}
}
\end{document}